\documentclass{amsart}
\usepackage{amsmath,amsfonts,amssymb,amsthm, enumerate,latexsym,mathrsfs, color}
\usepackage[alphabetic]{amsrefs}
\usepackage{graphicx}
\usepackage[all]{xy}
\newcounter{nthm}

\newcounter{nthmm}

\setcounter{tocdepth}{1}

\newtheorem{thm}{Theorem}

\newtheorem{thm*}[nthm]{Theorem}
\newtheorem{thm**}[nthmm]{Th\'eor\`eme}
\newtheorem{defn}{Definition}
\newtheorem*{defn*}{Definition}
\newtheorem{prop}{Proposition}
\newtheorem{prop*}[nthm]{Proposition}

\newtheorem{cor}{Corollary}

\newcommand{\A}{\mathcal A}
\newcommand{\N}{\mathbb{N}}
\newcommand{\Z}{\mathbb{Z}}

\newcommand{\Q}{\mathbb{Q}}

\newcommand{\K}{\mathcal{K}}

\newcommand{\C}{\mathbb{C}}

\newcommand{\restrict}[2]{#1 \upharpoonright #2}

\newcommand{\m}[1]{\textbf{#1}}

\newcommand{\e}{\varepsilon}

\newcommand{\Aut}{\mathrm{Aut}}
\newcommand{\Stab}{\mathrm{Stab}}
\newcommand{\Age}{\mathrm{Age}}

\newcommand{\actson}{\curvearrowright}

\newcommand{\RUCB}{\mathrm{RUC}_{b}}
\newcommand{\ROB}{\mathrm{Ro}_{b}}
\newcommand{\WAP}{\mathrm{WAP}}
\newcommand{\AP}{\mathrm{AP}}

\author{Lionel Nguyen Van Th\'e}

\address{Aix Marseille Univ, CNRS, Centrale Marseille, I2M UMR 7373, 13453 Marseille, France}

\email{lionel.nguyen-van-the@univ-amu.fr}

\thanks{This work has been partially supported by the GrupoLoco project (ANR-11-JS01-0008) funded by the French Government, managed by the French National Research Agency (ANR)}

\subjclass[2010]{Primary: 37B05 
; Secondary: 
03C15 
03E02 
05D10 
22F50 
43A07 
54H20 
}

\keywords{Ramsey theory, fixed point properties in topological dynamics}

\date{December 2017}

\title[Fixed points in compactifications and combinatorial counterparts]{Fixed points in compactifications and combinatorial counterparts}

\begin{document}
\maketitle

\begin{abstract}
The Kechris-Pestov-Todorcevic correspondence connects extreme amenability of topological groups with Ramsey properties of classes of finite structures. The purpose of the present paper is to recast it as one of the instances of a more general construction, allowing to show that Ramsey-type statements actually appear as natural combinatorial expressions of the existence of fixed points in certain compactifications of groups, and that similar correspondences in fact exist in various dynamical contexts. 
\end{abstract}

\section{Introduction and results} 

\label{section:Intro}

\subsection{Introduction}

In \cite{KPT}, Kechris, Pestov and Todorcevic established a striking correspondence between topological dynamics and structural Ramsey theory (for a precise statement, see Theorem \ref{thm:KPT} below). Building on the seminal works of Graham-Rothschild \cite{GR}, Graham-Leeb-Rothschild \cites{GLR1, GLR2}, Abramson-Harrington \cite{AH} and Ne\v set\v ril-R\"odl \cites{NR1,NR2}, this turned out to be an invaluable tool to produce extremely amenable groups when concentration of measure is not available (as in \cites{GM,Gl1, GP}), and to reach a better understanding of the dynamics of infinite-dimensional topological groups (see for example \cites{AKL, Z1} in the non-Archimedean Polish case, or \cites{MT, MNT, BYMT} in the general Polish case). It also considerably impacted the recent activity around 
around Fra\"iss\'e theory and structural Ramsey theory, providing new incentives to construct and/or identify highly homogeneous structures (see \cites{KuS, Ku, EFHKKL}), and to prove and/or use new partition results (see for example the paper \cite{Sol6} and references therein, the surveys \cite{Bod2} and \cite{NVT6}, as well as \cites{BaK2, BaK3, BLLM, HN, PS} for more recent results).  

The purpose of this paper is to recast the Kechris-Pestov-Todorcevic correspondence as an instance of a more general construction, allowing to show that Ramsey-type statements actually appear naturally when expressing combinatorially the existence of fixed points in certain compactifications of groups. As a consequence, it is proved in a unified way that similar correspondences in fact exist in various dynamical contexts. Some of them are presented here as illustrations, and exhibit combinatorial properties that are equivalent or implied by fixed-point properties like minimal almost periodicity, strong amenability and amenability. Among those, some isolate new phenomena, while some others allow to recover some previously known results that were originally obtained in different contexts.

The original motivation to undertake such a project was also to gain a better understanding of those non-Archimedean Polish groups that contain a coprecompact extremely amenable subgroup. According to \cite{MNT} and \cite{Z2}, this class coincides with those groups for which all minimal flows are metrizable (and have a generic orbit). It also captures the so-called \emph{Ramsey property}, which expresses a particularly good behaviour from the point of view of partition calculus, and whose distribution remains mysterious among classes of finite structures. The new connections that are established in the present work do not solve that problem, but somehow make more precise the contours of the ``dark'' side that remains to be understood when attacking it ``from below''. 

\subsection{Results}

Throughout this paper, a \emph{$G$-flow} of a topological group $G$ will be a continuous action of $G$ on some compact Hausdorff space, while a \emph{$G$-ambit} will be a $G$-flow together with a distinguished point whose orbit is dense. These objects will be referred to via the following notation: $G\actson X$ for $G$-flows, $G\actson(X,x)$ for $G$-ambits. Unless explicitly specified, all actions will be \emph{left actions}. 

The main line of attack, whose initial part shares some features with \cite[Section 11]{GlM2} by Glasner-Megrelishvili (though this was realized only \emph{a posteriori}), builds on the works of Pestov \cites{Pe1, Pe0, Pe} and of Kechris-Pestov-Todorcevic \cite{KPT}, and can be condensed as follows: Assume that some class $\mathcal X$ of $G$-flows admits a \emph{universal $G$-ambit}
in the sense that every $G$-ambit with underlying flow in $\mathcal X$ is a factor of $G\actson (X,x)$. Such an object always appears 
as the Gelfand compactification $G^{\A}$ of $G$ with respect to a particular C$^{*}$-algebra $\A$, which can be described explicitly. Modulo certain technical requirements, there is a fixed point in every $G$-flow in $\mathcal X$ iff $G\actson G^{\A}$ has a fixed point. This last fact can be expressed in terms of a property on the elements of $\A$, first isolated by Pestov, and called \emph{finite oscillation stability}. Under appropriate assumptions, this property discretizes as a Ramsey-type statement, which can sometimes be completely finitized. 

This strategy leads to the master result of this paper, Theorem \ref{thm:fpRP}, and will be particularized to the following classes of flows (where arrows symbolize inclusions): 

\[ \xymatrix{
& \textrm{Compact flows} & \\
\textrm{Distal flows} \ar[ru] & & \textrm{Proximal flows} \ar[lu]\\
\textrm{Equicontinuous flows} \ar[u] & & \textrm{Strongly proximal flows} \ar[u]
} \]

Recall that given a flow $G\actson X$ and $x,y\in X$, the ordered pair $(x,y)$ is \emph{proximal} when there exists a net $(g_{\alpha})_\alpha$ of elements of $G$ such that $\lim_\alpha g_\alpha \cdot x = \lim_\alpha g_\alpha \cdot y$. Otherwise, $(x,y)$ is \emph{distal}. Equivalently, these notions can be expressed in terms of the uniformity $\mathcal U_{X}$ of $X$:  $(x,y)$ is proximal when for every $U$ in $\mathcal U_{X}$, there exists $g\in G$ so that $(g\cdot x,g\cdot y)\in U$; $(x,y)$ is distal when there is $U$ in $\mathcal U_{X}$ so that no $g\in G$ satisfies $(g\cdot x,g\cdot y)\in U$. Then, the flow $G\actson X$ is \emph{proximal} when every $(x,y)\in X^{2}$ is proximal, \emph{strongly proximal} when the induced flow on the Borel probability measures of $X$ is proximal, and \emph{distal} when every $(x,y)\in X^{2}$ with $x\neq y$ is distal. A strict subclass of the distal flows is provided by the \emph{equicontinuous flows} which satisfy: $$\forall U_{\varepsilon}\in \mathcal U_{X} \ \exists U_{\eta}\in \mathcal U_{X} \ \forall x,y\in X \quad (x,y)\in U_{\eta} \Rightarrow \forall g\in G \ (g\cdot x, g\cdot y)\in U_{\varepsilon}$$

To each of the aforementioned classes of flows, one can associate a natural fixed-point property: a topological group $G$ is \emph{extremely amenable} when every $G$-flow has a fixed point, \emph{strongly amenable} when every proximal $G$-flow has a fixed point, \emph{amenable} when every strongly proximal $G$-flow has a fixed point (equivalently, every $G$-flow admits a $G$-invariant Borel probability measure), and \emph{minimally almost periodic} when every equicontinuous $G$-flow has a fixed point (which is known to be equivalent to having a fixed point on any \emph{distal} $G$-flow, having no non-trivial finite-dimensional unitary representation, and/or admitting no non-trivial continuous morphism to a compact group). This leads to the following ``dual'' form of the previous diagram: 

\[ \xymatrix{
& \textrm{Extreme amenability} \ar@{=>}[ld] \ar@{=>}[rd]& \\
\textrm{Minimal almost periodicity}& & \textrm{Strong amenability} \ar@{=>}[d]\\
& & \textrm{Amenability}
} \] 

In practice, the aforementioned strategy suggests in fact two slightly different kinds of applications. Starting from a natural class $\mathcal X$ of flows, one may express combinatorially the fixed point property relative to those flows; this requires some particular conditions on $\mathcal X$, which are satisfied for equicontinuous/distal flows and for proximal flows. Conversely, starting from natural algebras, one may isolate a class of flows on which the fixed point property is combinatorially meaningful. This will be done for the Roelcke algebra, and to some extent for the weakly almost periodic algebra. The relationship between all the corresponding ambits can be represented as follows, where $S(G)$ stands for the Samuel compactification of $G$, $R(G)$ for the Roelcke compactification, $W(G)$ for the weakly almost periodic compactification, $B(G)$ for the Bohr compactification, $P(G)$ for the proximal compactification, and $P_{S}(G)$ for the strongly proximal compactification: 

\[ \xymatrix{
& & (S(G),e_{G}) \ar@{->>}[ld] \ar@{->>}[rdd]& \\
& (R(G),e_{G}) \ar@{->>}[ld] & & \\
(W(G),e_{G}) \ar@{->>}[d] & &  & (P(G),e_{G}) \ar@{->>}[d]\\
(B(G),e_{G})& & & (P_{S}(G),e_{G})
} \]

\

On the combinatorial side, the general setting is that of first-order structures in the usual sense of model-theory (see for example \cite{H} for a standard reference) but for simplicity, we will restrict our attention to the relational setting. Given a first-order relational language (i.e. a family $(R_{i})_{i\in I}$ of symbols together with associated arities $m_{i}\geq 1$), a \emph{structure} $\m A$ is a non-empty set $A$, together with a family of subsets $R^{\m A}_{i}\subset A^{m_{i}}$ for every $i\in I$. To such objects is naturally attached a notion of isomorphism and of embedding, where an embedding is just an isomorphism onto its image; given two structures $\m A$ and $\m B$, the set of all embeddings of $\m A$ in $\m B$ will be denoted by $\binom{\m B}{\m A}$ (note that this differs from the common notation according to which $\binom{\m B}{\m A}$ refers to the set of all substructures of $\m B$ isomorphic to $\m A$). A structure is \emph{ultrahomogeneous} when any isomorphism between any two of its finite substructures extends to an automorphism. There is now a rich theory around those objects, starting with the seminal work of Fra\"iss\'e himself \cite{Fr0}. For that reason, countable ultrahomogeneous structures are now called \emph{Fra\"iss\'e} structures (denoted by $\m F$ is the sequel). In the recent developments of Fra\"iss\'e theory, a main concern is the study of the interaction between the combinatorics of the set $\Age(\m F)$ of all finite substructures of $\m F$ and the dynamics of the automorphism group $\Aut(\m F)$. The main theorem of \cite{KPT} is a striking illustration of this: 

\begin{thm}[Kechris-Pestov-Todorcevic \cite{KPT}]
\label{thm:KPT}
Let $\m F$ be a Fra\"iss\'e structure. TFAE: 
\begin{enumerate}
\item[i)] $\Aut(\m F)$ is extremely amenable. 
\item[ii)] $\Age(\m F)$ has the Ramsey property. 
\end{enumerate}
\end{thm}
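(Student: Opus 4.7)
The plan is to follow the general strategy outlined in the introduction, specialized to the class $\mathcal X$ of \emph{all} $G$-flows, where $G = \Aut(\m F)$. The universal $G$-ambit for this class is the Samuel compactification $G \actson (S(G), e_G)$, realized as the Gelfand spectrum of the algebra $\RUCB(G)$ of bounded right-uniformly continuous functions on $G$. Since every $G$-flow is a factor of a $G$-ambit with underlying flow in $\mathcal X$, $G$ is extremely amenable if and only if $G \actson S(G)$ admits a fixed point.

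To exploit this equivalence, I would next use the non-Archimedean nature of $G$ to describe $\RUCB(G)$ combinatorially. A basis of neighborhoods of $e_G$ is given by the open subgroups $\Stab(\m A)$, where $\m A$ ranges over finite substructures of $\m F$, so $\RUCB(G)$ admits a dense subalgebra of functions factoring through some coset space $G / \Stab(\m A)$. Via ultrahomogeneity of $\m F$, the evaluation map $g \mapsto g \upharpoonright \m A$ identifies $G / \Stab(\m A)$ with the embedding space $\binom{\m F}{\m A}$, so this dense subalgebra exactly records finite colorings of $\binom{\m F}{\m A}$. Applying Pestov's finite oscillation stability, the existence of a fixed point in $S(G)$ translates into the following infinitary statement: for every finite $\m A \subseteq \m F$, every $k \geq 1$, every coloring $c : \binom{\m F}{\m A} \to k$, and every finite $\m B \subseteq \m F$ with $\m A \subseteq \m B$, there exists $g \in G$ such that $c$ is constant on $g \cdot \binom{\m B}{\m A}$.

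It then remains to show that this infinitary statement is equivalent to the finitary Ramsey property of $\Age(\m F)$. The direction from finite to infinite uses that every finite substructure of $\m F$ lies in $\Age(\m F)$: given $\m A, \m B, k$ and a coloring $c$ on $\binom{\m F}{\m A}$, one applies (ii) to obtain $\m C \in \Age(\m F)$ witnessing the Ramsey property for $(\m A, \m B, k)$, embeds $\m C$ into $\m F$, and extracts a $c$-monochromatic copy of $\m B$ inside this $\m C$; ultrahomogeneity then promotes this copy to the desired $g \in G$. The converse proceeds by contradiction: a failure of (ii) for some $(\m A, \m B, k)$ produces, for each $\m C \in \Age(\m F)$, a bad coloring of $\binom{\m C}{\m A}$; a Tychonoff compactness argument on $k^{\binom{\m F}{\m A}}$ extracts a limit coloring of $\binom{\m F}{\m A}$ admitting no monochromatic copy of $\m B$, contradicting the infinitary statement.

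The main obstacle lies in the second step, namely in pinning down the correct dense subalgebra of $\RUCB(G)$ and translating finite oscillation stability into the combinatorial language of colored embedding spaces. Once this dictionary is in place, the back-and-forth with the finitary Ramsey property is rather standard, leveraging the joint embedding and amalgamation properties of $\Age(\m F)$ together with the compactness of finite products.
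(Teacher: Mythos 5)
Your proposal follows essentially the same route as the paper: there, Theorem \ref{thm:KPT} is obtained by specializing the master results (Theorem \ref{thm:fpRP}, via Theorem \ref{thm:fpRPcpct} applied to $\A=\RUCB(\Aut(\m F))$, whose Gelfand space is the Samuel compactification) to the class of all flows, using Pestov's finite oscillation stability, the identification of coset spaces of stabilizers with the embedding spaces $\binom{\m F}{\m A}$, and the density of finite colorings; the finitization by compactness is likewise treated as standard. One small correction to your dictionary: since $\RUCB(G)$ consists of \emph{right}-uniformly continuous functions, the dense subalgebra of finite colorings is made of functions factoring through the right quotients $\Stab(\m A)\backslash G$, identified with $\binom{\m F}{\m A}$ via $g\mapsto \restrict{g^{-1}}{A}$, rather than through $G/\Stab(\m A)$ via $g\mapsto \restrict{g}{A}$ --- this inversion is harmless for the final combinatorial statement, which is symmetric under it, but as literally written your density claim is the one appropriate to the left uniformity.
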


The Ramsey property (for embeddings) referred to in the previous results means that for every $\m A\in \Age(\m F)$, every function $\chi$ taking finitely many values on $\binom{\m F}{\m A}$ (such a $\chi$ is usually referred to as a finite coloring) is necessarily constant on arbitrarily large finite set. Precisely: given any $\m B \in \Age(\m F)$, in which $\m A$ typically embeds in many ways, $\chi$ is constant of some set of the form $\binom{b(\m B)}{\m A}$, for some $b\in \binom{\m F}{\m B}$. Under that form, the Ramsey property is a property of $\m F$ rather than $\Age(\m F)$, but it finitizes under the following form: for every $\m A, \m B\in \Age(\m F)$, every $k\in \N$, there exists $\m C\in \Age(\m F)$ such that every coloring of $\binom{\m C}{\m A}$ taking at most $k$ many values is constant on $\binom{b(\m B)}{\m A}$, for some $b\in \binom{\m C}{\m B}$. The typical result of the paper will be of similar flavor. Its general form, condensed in Theorem \ref{thm:fpRP}, states that $\Aut(\m F)$ has a fixed point property of a particular kind iff $\m F$ has some Ramsey-type property, restricted to some particular kind of colorings (see Section \ref{section:colorings} and \ref{section:ambits} for definitions): 


\begin{thm}
\label{thm:fpRP}
Let $\m F$ be a Fra\"iss\'e structure, $\mathcal X$ be a class of $\Aut(\m F)$-flows such that the class of $\mathcal X$-$\Aut(\m F)$-ambits is closed under suprema and factors, and that every $\Aut(\m F)\actson X\in \mathcal X$ admits some $x\in X$ such that $\Aut(\m F)\actson \overline{\Aut(\m F)\cdot x}\in \mathcal X$. Then $\A:=\{ f\in \RUCB(G) : G\actson \overline{G\bullet f} \in \mathcal X\}$ is a unital, left-invariant, closed C$^{*}$-subalgebra of $\RUCB(\Aut(\m F))$, and TFAE: 
\begin{enumerate}
\item[i)] Every $\Aut(\m F)$-flow in $\mathcal X$ has a fixed point. 
\item[ii)] For every $\e>0$, $\m F$ has the Ramsey property up to $2\e$ for the finite colorings in $(\A)_{\e}$. 
\end{enumerate}

Those imply the following equivalent statements:  
\begin{enumerate}
\item[iii)] Every zero-dimensional $\Aut(\m F)$-flow in $\mathcal X$ has a fixed point. 
\item[iv)] $\m F$ has the Ramsey property for the finite colorings in $\A$.
\end{enumerate} 

When the finite colorings are dense in $\A$, all those statements are equivalent.

\end{thm}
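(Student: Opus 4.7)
The plan is to follow the strategy sketched in the introduction. First, I would verify the algebraic statement: $\A$ is a unital left-invariant closed C$^*$-subalgebra of $\RUCB(G)$, where $G = \Aut(\m F)$. Using that each $f \in \RUCB(G)$ corresponds to the cyclic $G$-ambit $G \actson (\overline{G \bullet f}, f)$ and that $f \in \A$ iff this cyclic ambit lies in $\mathcal X$, closure of $\A$ under sums, products and conjugates reduces to checking that the joint cyclic ambit generated by $f_1, \ldots, f_n \in \A$ again lies in $\mathcal X$. This joint ambit is the supremum of the individual cyclic ambits in the category of $G$-ambits, hence lies in $\mathcal X$ by the closure-under-suprema hypothesis, and cyclic ambits of sums, products and conjugates are factors of it, so also lie in $\mathcal X$ by closure under factors. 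Uniform closure follows by identifying uniform limits with inverse limits of cyclic ambits.

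Next, I would identify the universal $\mathcal X$-$G$-ambit. Gelfand duality produces a $G$-ambit $(G^{\A}, e_G)$ with $C(G^{\A}) \cong \A$, whose cyclic factors are precisely the cyclic $\mathcal X$-ambits; closure of $\mathcal X$-ambits under suprema and factors promotes $(G^{\A}, e_G)$ to the universal $\mathcal X$-$G$-ambit. Combined with the hypothesis that every $\mathcal X$-flow contains an orbit closure in $\mathcal X$, this yields the equivalence of (i) with the statement that $G \actson G^{\A}$ has a fixed point.

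The heart of the proof is then to translate this fixed-point condition into a Ramsey-type statement, \emph{à la} Pestov: $G \actson G^{\A}$ has a fixed point iff for every $f \in \A$, every $\e > 0$ and every finite $F \subset G$, there is $g \in G$ with $\mathrm{osc}(f|_{gF}) < \e$. In the Fra\"iss\'e setting, $G$ acts on the embedding sets $\binom{\m F}{\m A}$ for $\m A \in \Age(\m F)$, $G$-orbits on $\binom{\m F}{\m A}$ match up with cosets of the open stabilizer $\Stab(\m A) \subset G$, and finite $F \subset G$ correspond to sets of the form $\binom{\m B}{\m A}$ for $\m B \in \Age(\m F)$. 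Given a finite coloring $\chi$ in $(\A)_\e$, I would pick $f \in \A$ with $\|\chi - f\|_\infty \le \e$, apply finite oscillation stability to $f$ to obtain $g \in G$ with $\mathrm{osc}(f|_{gF}) < \e$, and deduce that $\chi$ has oscillation $< 2\e$ on the corresponding block $\binom{b(\m B)}{\m A}$. This gives (i) $\Leftrightarrow$ (ii). For (iii) $\Leftrightarrow$ (iv) one repeats the argument for the C$^*$-subalgebra of $\A$ generated by its idempotents, whose Gelfand spectrum is the universal zero-dimensional $\mathcal X$-$G$-ambit; here the elements of the algebra are uniform limits of finite colorings in $\A$, so no $\e$-approximation is needed and an exact Ramsey property suffices. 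Finally, when finite colorings are dense in $\A$, a finite coloring in $(\A)_\e$ can be compared uniformly to one in $\A$, and the exact Ramsey property for the latter yields the Ramsey property up to $2\e$ for the former.

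The main obstacle I expect is the discretization in the third step, relating Pestov's finite oscillation stability to an approximate Ramsey property on the embedding sets $\binom{\m F}{\m A}$. One must carefully reconcile the unilateral uniformity on $\RUCB(G)$ with the $G$-action on $\binom{\m F}{\m A}$, take care of the fact that $\A$ may contain continuous-valued functions that are not colorings at all, and keep track of the factor of $2$ in the oscillation estimate — one $\e$ to pass from a coloring in $(\A)_\e$ to a nearby element of $\A$, another $\e$ from Pestov's oscillation bound.
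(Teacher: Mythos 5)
Your proposal follows essentially the same route as the paper: the algebra claim and the identification of $(G^{\A},e_{G})$ as the universal $\mathcal X$-$G$-ambit are the paper's Proposition \ref{prop:KA} (proved there by forming the supremum $\bigvee_{f\in\A}(\overline{G\bullet f},f)$ and invoking Gelfand duality), the discretization of Pestov's finite oscillation stability with the $2\e$ bookkeeping is Propositions \ref{prop:aOSRamsey} and \ref{prop:OSRP}, and your subalgebra generated by the idempotents of $\A$ coincides with the paper's left-invariant closed subalgebra generated by the finite colorings in $\A$, used in exactly the same way to get iii)$\Leftrightarrow$iv) and the density case. The one detail to correct is the side convention you flag yourself: for right-uniformly continuous functions and the left-shift action, oscillation stability must be required on right translates $Hg$ (which correspond to the sets $\binom{g^{-1}(\m B)}{\m A}$), not on left translates $gF$.
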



Notice that by considering the class $\mathcal X$ of \emph{all} $G$-flows, which obviously satisfies the hypotheses of Theorem \ref{thm:fpRP}, we directly obtain Theorem \ref{thm:KPT}. By varying the class of flows under consideration, this will lead to several other concrete incarnations. 

The left side of the diagrams of the previous page appears to be particularly well adapted for such an analysis. A \emph{joint embedding} $\langle a,z\rangle$ of two structures $\m A$ and $\m Z$ is a pair $(a,z)$ of embeddings of $\m A$ and $\m Z$ into some common structure $\m C$. Two such pairs $\langle a,z\rangle$ with common range $\m C$ and $\langle a',z'\rangle$ with common range $\m C'$ are \emph{isomorphic} (written $\langle a,z\rangle \cong \langle a',z'\rangle$) when there is an isomorphism $c:\m C\rightarrow \m C'$ so that $a'=c\circ a$ and $z'=c\circ z$. Occasionally, the isomorphim type of a joint embedding $\langle a,z\rangle$ will be referred to as its \emph{joint embedding pattern} and will be written $[a,z]$. In what follows, because the language is assumed to be relational, the joint embeddings which satisfy $C=a(A)\cup z(Z)$ will be the only ones to be considered, without any explicit mention of $\m C$. Note also that the notion of joint embedding $\langle \m A, \m Z^{1},...,\m Z^{k}\rangle$ and joint embedding pattern $[\m A, \m Z^{1},...,\m Z^{k}]$ can be defined in the same way in the case of finitely many structures $\m A$, $\m Z^{1}$,...,$\m Z^{k}$. 

\begin{defn}
Let $\K$ be a class of finite structures in some first order language. It has the \emph{definable Ramsey property} when for every $\m A, \m B\in \K$, every $\m Z\in \K$, there exists $\m C \in \K$ such that for every joint embedding  $\langle c,z\rangle$ of $\m C$ and $\m Z$, there is $b\in \binom{\m C}{\m B}$ so that the coloring $a\mapsto [a,z]$ is constant on $\binom{b(\m B)}{\m A}$. 
\end{defn}

Note the similarity with the usual Ramsey property. For the combinatorialist, what has just been defined should be thought as $\m C \rightarrow (\m B)^{\m A}_{\m Z}$. The dynamical meaning of the definable Ramsey property will be made explicit soon, but in view of the fixed point properties described previously, it makes more sense to consider first the following weakening, which will look familiar to the model theorist:

\begin{defn}
Let $\K$ be a class of finite structures in some first order language, and $\m A, \m Z\in \K$. An \emph{unstable $(\m A,\m Z)$-sequence} is a family of joint embeddings $(\langle a_{m}, z_{n}\rangle)_{m, n\in\N}$ of $\m A$ and $\m Z$ such that there exist two different joint embedding patterns $\tau_{<}$ and $\tau_{>}$ satisfying: $$ \forall m,n\in \N \quad (m<n \Rightarrow [a_{m},z_{n}] = \tau_{<})\wedge (m>n \Rightarrow [a_{m},z_{n}] = \tau_{>})$$


When there is no unstable $(\m A, \m Z)$-sequence, the pair $(\m A, \m Z)$ is \emph{stable}. 

\end{defn}

\begin{defn}
Let $\K$ be a class of finite structures in some first order language. It has the \emph{stable Ramsey property} when it has the definable Ramsey property in restriction to stable pairs. More formally: for every $\m A, \m B\in \K$, every $\m Z^{1},...,\m Z^{k}\in \K$ so that every pair $(\m A, \m Z^{i})$ is stable, there exists $\m C \in \K$ such that for every joint embedding  $\langle c,z^{1},...,z^{k}\rangle$, there is $b\in \binom{\m C}{\m B}$ so that for every $i\leq k$, the joint embedding pattern $[a,z^{i}]$ does not depend on $a\in \binom{b(\m B)}{\m A}$. 
\end{defn}

With these notions in mind, here is the characterization of minimal almost periodicity in the spirit of the Kechris-Pestov-Todorcevic correspondence: 

\begin{thm}
\label{thm:MAPStableRP}
Let $\m F$ be a Fra\"iss\'e structure with Roelcke-precompact automorphism group. TFAE: 
\begin{enumerate}
\item[i)] $\Aut(\m F)$ is minimally almost periodic.  
\item[ii)] For every $\m A\in \Age(\m F)$, every $\Aut(\m F)$-invariant equivalence relation on $\binom{\m F}{\m A}$ with finitely many classes is trivial. 
\item[iii)] $\Age(\m F)$ has the stable Ramsey property.
\end{enumerate} 
\end{thm}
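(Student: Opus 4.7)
The plan is to apply Theorem~\ref{thm:fpRP} to the class $\mathcal X$ of equicontinuous $\Aut(\m F)$-flows. Its hypotheses are straightforward to verify: subflows and products of equicontinuous flows are equicontinuous, so the $\mathcal X$-$\Aut(\m F)$-ambits are closed under suprema and factors, and every point of such a flow generates an equicontinuous subflow. The resulting algebra $\A$ is $\AP(G)$, with $G:=\Aut(\m F)$, whose universal ambit is the Bohr compactification $(B(G),e_{G})$. Condition (i) therefore translates into $B(G)=\{e\}$, and (iii) will come out of the sharper combinatorial form of the theorem.

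For (i)$\Leftrightarrow$(ii), I would argue group-theoretically. By ultrahomogeneity $G$ acts transitively on $\binom{\m F}{\m A}$; fixing a base embedding $a_{0}$ with open stabilizer $H:=\Stab(a_{0})$, the $G$-invariant equivalence relations on $\binom{\m F}{\m A}$ with finitely many classes correspond bijectively to intermediate subgroups $H\leq K\leq G$ of finite index. Letting $\m A$ range over $\Age(\m F)$, $H$ sweeps out a basis of open subgroups of $G$, so (ii) becomes the assertion that $G$ has no proper open subgroup of finite index, equivalently no non-trivial continuous morphism to a finite group. For Roelcke-precompact non-Archimedean Polish $G$, I would then invoke Tsankov's theorem on unitary representations of oligomorphic groups, which ensures that every continuous finite-dimensional unitary representation of $G$ has finite image; an elementary argument using the Peter-Weyl embedding of $B(G)$ into a product of $U(n)$'s forces $B(G)$ to be profinite, and hence $B(G)=\{e\}$ is equivalent to the same finite-quotient condition, closing the loop.

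For (i)$\Leftrightarrow$(iii), the profiniteness of $B(G)$ also guarantees that locally constant functions are uniformly dense in $C(B(G))$, so the finite colorings are dense in $\A=\AP(G)$ and the last clause of Theorem~\ref{thm:fpRP} collapses (i)--(iv) into the equivalence of (i) with the Ramsey property for finite colorings in $\A$. What remains is a combinatorial matching with the stable Ramsey property, and this is where I expect the main obstacle: the identification that a finite coloring of $\binom{\m F}{\m A}$ lies in $\AP(G)$ if and only if it is generated, as a Boolean algebra, by joint-embedding-pattern functions $a\mapsto[a,z]$ arising from stable pairs $(\m A,\m Z)$. The non-trivial direction rests on the coincidence $\WAP(G)=\AP(G)$ for Roelcke-precompact $G$ (the WAP compactification being itself a group), together with Grothendieck's double-limit criterion, which translates the failure of weak almost periodicity of $a\mapsto[a,z]$ into the existence of an unstable $(\m A,\m Z)$-sequence in the sense of the excerpt. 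Once this correspondence is in place, the Ramsey property for finite colorings in $\A$ reads off as the stable Ramsey property of $\Age(\m F)$, completing the equivalence.
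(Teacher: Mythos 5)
Your treatment of $i)\Leftrightarrow ii)$ is a legitimate alternative, but it is essentially the previously known route: the paper explicitly credits this equivalence to Tsankov's classification of unitary representations of oligomorphic groups, and instead gives a self-contained argument, proving directly (via Hahn's fixed point theorem applied first to the left-shift and then to the right-shift action) that finite colorings are dense in $\AP(\Aut(\m F))$, and then observing that a finite coloring lies in $\AP$ exactly when its orbit is finite, i.e.\ when it comes from an invariant equivalence relation with finitely many classes. Your route buys brevity at the cost of importing a deep external theorem that the paper is precisely trying to reprove.

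The argument for $i)\Leftrightarrow iii)$ has a genuine gap. The claim that $\WAP(G)=\AP(G)$ for Roelcke-precompact $G$ (``the WAP compactification being itself a group'') is false: for $G=S_\infty$ the group is minimally almost periodic, so $\AP(S_\infty)$ reduces to the constants, whereas $\WAP(S_\infty)$ contains the non-constant finite coloring $a\mapsto[a,z]$ for singletons (equality is a stable pair, so this coloring is weakly almost periodic by the paper's Grothendieck criterion); in particular $W(S_\infty)$ is a large semitopological semigroup, not a group. Consequently your proposed identification of the finite colorings in $\AP(G)$ with those generated by pattern functions of stable pairs is wrong: a stable pattern coloring typically has infinite orbit and hence is not almost periodic, while the finite colorings in $\AP(G)$ are exactly those with finite orbit --- the ones already accounted for in $ii)$ --- and the Ramsey property for those is not the stable Ramsey property. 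The correct bridge, which the paper takes, is to work with the strictly larger algebra $\WAP(G)$ and to supply three ingredients: (a) the non-combinatorial fact that $G$ is minimally almost periodic iff $G\actson W(G)$ has a fixed point, which rests on $B(G)$ being the unique minimal two-sided ideal of $W(G)$ together with the Berglund--Junghenn--Milnes point-universality $\WAP(G)=\{f_{x}: f\in\WAP(G),\ x\in W(G)\}$; (b) the density of finite colorings in $\WAP(\Aut(\m F))$ (Ben Yaacov--Tsankov); and (c) Grothendieck's double-limit criterion identifying the fully determined finite Roelcke colorings in $\WAP$ with the stable pairs $(\m A,\m Z)$. You correctly anticipate (c), but without (a) and (b) --- and with the false $\WAP=\AP$ shortcut standing in for them --- the equivalence with the stable Ramsey property does not go through.
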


This approach provides a new proof of the equivalence between the first two items, which already appears in the work of Tsankov \cite{Ts} where unitary representations of oligomorphic groups were classified, or of Ben Yaacov \cite{BY} where the relationship between the Bohr compactification and the algebraic closure of the empty set was already identified. Note also that since minimal almost periodicity is implied by the existence of a fixed point in the Roelcke compactification, it can also be proved thanks to the following: 

\begin{thm}
\label{thm:fpRoecke}
Let $\m F$ be a Fra\"iss\'e structure. TFAE:
\begin{enumerate}
\item[i)] The flow $\Aut(\m F)\actson R(\Aut(\m F))$ has a fixed point. 
\item[ii)] For every $\m A, \m B, \m Z\in \Age(\m F)$, every finite coloring $\gamma$ of the joint embedding patterns of $\m A$ and $\m Z$, there exists a joint embedding  $\langle b,z\rangle$ such that the coloring $a\mapsto \gamma([a,z])$ is constant on $\binom{b(\m B)}{\m A}$. 
\end{enumerate} 

When $\Aut(\m F)$ is Roelcke-precompact, these conditions are equivalent to: 
\begin{enumerate}
\item[iii)] For every $\m A, \m B, \m Z\in \Age(\m F)$, there exists a joint embedding  $\langle b,z\rangle$ such that the coloring $a\mapsto [a,z]$ is constant on $\binom{b(\m B)}{\m A}$.  
\end{enumerate}
\end{thm}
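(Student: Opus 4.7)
The plan is to deduce both equivalences from the master theorem (Theorem \ref{thm:fpRP}) by choosing $\mathcal X$ so that the associated algebra $\A$ is the Roelcke algebra $\ROB(G)$, where $G = \Aut(\m F)$, with universal $\mathcal X$-ambit $(R(G), e_G)$. Concretely I would take $\mathcal X$ to be the class of $G$-flows whose associated ambit factors through $R(G)$; the closure requirements of Theorem \ref{thm:fpRP} (closure of ambits under suprema and factors, plus existence of a point with orbit-closure still in $\mathcal X$) follow from the fact that $R(G)$ is itself a $G$-ambit, combined with elementary properties of factor maps and suprema of ambits. With this choice, $\A$ coincides with $\ROB(G)$, and Theorem \ref{thm:fpRP}(i) becomes precisely (i).

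Next I would identify the finite colorings in $\ROB(G)$ in combinatorial terms. By construction $\ROB(G)$ is the closed unital C$^{*}$-algebra generated by indicator functions of double cosets $U g V$ with $U, V$ open subgroups of $G$. For $G = \Aut(\m F)$ and $U, V$ the pointwise stabilizers $\Stab_{\mathrm{ptw}}(a(\m A))$ and $\Stab_{\mathrm{ptw}}(z(\m Z))$ of embeddings $a : \m A \to \m F$ and $z : \m Z \to \m F$ of structures in $\Age(\m F)$, the double coset of $g$ is determined exactly by the joint embedding pattern $[a, g \cdot z]$. Hence finite colorings in $\ROB(G)$ coincide with finite colorings $\gamma$ of the joint embedding patterns of $\m A$ and $\m Z$, and such colorings are dense in $\ROB(G)$.

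With this dictionary, condition (iv) of Theorem \ref{thm:fpRP} unpacks into (ii) after using ultrahomogeneity of $\m F$ to pass from the ``external'' formulation (finding some $\m C \in \Age(\m F)$ on which the Ramsey property holds) to the ``internal'' one inside $\m F$ (finding the witnessing joint embedding $\langle b, z \rangle$ directly in $\m F$). Density of the finite colorings in $\A$ then allows Theorem \ref{thm:fpRP} to deliver the full equivalence (i) $\Leftrightarrow$ (ii).

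For the Roelcke-precompact case I would invoke the standard characterization that $G$ is Roelcke-precompact iff $U \backslash G / V$ is finite for every pair of open subgroups; by the above translation this is the same as saying that for any $\m A, \m Z \in \Age(\m F)$ there are only finitely many joint embedding patterns. Thus the tautological map $a \mapsto [a, z]$ is itself a finite coloring, so (ii) applied to it yields (iii), while (iii) $\Rightarrow$ (ii) is immediate by postcomposition with $\gamma$. The step I expect to require the most care is verifying that the class $\mathcal X$ described genuinely satisfies the hypotheses of Theorem \ref{thm:fpRP} and that the resulting $\A$ is all of $\ROB(G)$ rather than a proper subalgebra; once that is in place, the rest is essentially a translation between the dynamical and combinatorial languages.
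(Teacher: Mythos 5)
Your combinatorial half (identifying the finite colorings in $\ROB(\Aut(\m F))$ with finite colorings of joint embedding patterns via the double cosets $\Stab(\m A)g\Stab(\m Z)$, proving their density, using ultrahomogeneity to move the witness $z$, and deriving $ii)\Leftrightarrow iii)$ from the finiteness of $\Stab(\m A)\backslash \Aut(\m F)/\Stab(\m Z)$ under Roelcke-precompactness) is exactly what the paper does in Propositions \ref{prop:Roelcke-precompact}, \ref{prop:Roelcke coloring}, \ref{prop:RoelckeRP} and \ref{prop:RPRoelcke}. The gap is in the dynamical half: you route the argument through Theorem \ref{thm:fpRP}, whose hypotheses you do not verify and which, for the Roelcke class, are genuinely problematic rather than ``elementary''. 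If $\mathcal X$ is the class of flows admitting \emph{some} dense-orbit point whose ambit is a factor of $(R(G),e_{G})$, then an $\mathcal X$-$G$-ambit $(X,x)$ in the sense of Theorem \ref{thm:fpRP} is allowed to carry an arbitrary dense-orbit base point $x$, for which $g\mapsto g\cdot x$ need not be left uniformly continuous; consequently neither closure of the $\mathcal X$-ambits under suprema nor the equality $\A=\ROB(G)$ (as opposed to a strictly larger algebra) is automatic. The paper explicitly flags this pathology: the Roelcke compactification fails point-universality, i.e.\ $\ROB(G)\subsetneq\bigcup\{\ROB(G)_{x}:x\in R(G)\}$ in general (see the discussion after Proposition \ref{cor:trivialmin}), and a subflow of a Roelcke flow need not be Roelcke (Section \ref{subsection:Remarks}). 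You acknowledge this as ``the step requiring the most care'' but supply no argument, and I do not see how to supply one.

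The repair is to use the other master result, Theorem \ref{thm:fpRPcpct}, which is what the paper actually invokes: it applies to an arbitrary unital, left-invariant, closed C$^{*}$-subalgebra $\A$ of $\RUCB(\Aut(\m F))$, its item $i)$ is literally ``the flow $\Aut(\m F)\actson \Aut(\m F)^{\A}$ has a fixed point'', and it carries no hypotheses about classes of flows. Taking $\A=\ROB(\Aut(\m F))$ there and feeding in your density and translation steps yields the theorem; with that substitution your argument coincides with the paper's proof.
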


This result is very useful in practice; for example, it automatically holds when $\Age(\m F)$ has the free amalgamation property. Therefore, the automorphism group of the random graph, of the random hypergraph of any fixed finite type, or of any Henson graph (= countable ultrahomogeneous $K_{n}$-free graph for some $n\in \N$) is minimally almost periodic (this can also be proved using a different method, see \cite{NVT8}). As a slightly more involved application, Theorem \ref{thm:fpRoecke} can also be used to prove that the orthogonal group of $\ell^{2}$ is minimally almost periodic when equipped with its strong operator topology (see Section \ref{subsubsection:mapell2}). Much more is known about that object but the present proof is, in comparison, rather simple.  

Here is now the dynamical content of the definable Ramsey property: 

\begin{thm}
\label{thm:minRoeckeDefRP}
Let $\m F$ be a Fra\"iss\'e structure with Roelcke-precompact automorphism group. TFAE: 
\begin{enumerate}
\item[i)] Every minimal subflow of $\Aut(\m F)\actson R(\Aut(\m F))$ is trivial. 
\item[ii)] $\Age(\m F)$ has the definable Ramsey property.
\end{enumerate} 
\end{thm}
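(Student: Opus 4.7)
The plan is to specialize the master Theorem~\ref{thm:fpRP} to the class $\mathcal X$ of Roelcke flows of $G := \Aut(\m F)$, namely the $G$-flows that admit an ambit structure whose coefficient algebra lies in the Roelcke algebra $\ROB(G)$, equivalently those that appear as factors of $G \actson R(G)$. The hypotheses of the master theorem are readily verified: the class of Roelcke ambits is closed under factors (trivially) and under suprema (since $\ROB(G)$ is a C$^{*}$-subalgebra of $\RUCB(G)$), and every Roelcke flow admits a point whose orbit-closure ambit is Roelcke. Consequently the algebra $\A$ produced by the master theorem coincides with $\ROB(G)$.

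I would first translate condition (i) of the theorem into the fixed-point statement of the master theorem. The equivalence ``every minimal subflow of $R(G)$ is trivial'' iff ``every Roelcke flow has a fixed point'' goes as follows. For $(\Leftarrow)$: any minimal subflow $M \subset R(G)$ is (abstractly) a minimal Roelcke flow, hence by hypothesis contains a fixed point, which by minimality is all of $M$. For $(\Rightarrow)$: given any Roelcke flow $X$ with factor map $\pi : R(G) \to X$ and any minimal subflow $M' \subset X$, the preimage $\pi^{-1}(M')$ contains a minimal subflow $N$ of $R(G)$; by hypothesis $N$ is a single fixed point, so $\pi(N) = M'$ is a fixed point of $X$.

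For the combinatorial side, the Roelcke-precompactness of $G$ is used crucially: for any $\m A, \m Z \in \Age(\m F)$ with enumerations $\bar a, \bar z$, the orbits of $G$ acting diagonally on $[\bar a] \times [\bar z]$---equivalently the double cosets $\Stab(\bar a) \backslash G / \Stab(\bar z)$, equivalently the joint embedding patterns $[a,z]$---are finite in number. This yields two facts. First, the finite (locally constant) colorings are dense in $\ROB(G)$, so all four items of Theorem~\ref{thm:fpRP} become equivalent; in particular (i) $\Leftrightarrow$ (iv) there. Second, a finite coloring $\chi : \binom{\m F}{\m A} \to \{1, \dots, k\}$ lies in $\ROB(G)$ exactly when it factors as $\chi(a) = \gamma([a, z])$ for some $\m Z \in \Age(\m F)$, $z \in \binom{\m F}{\m Z}$, and some $\gamma$ defined on joint embedding patterns. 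Under this dictionary, condition (iv) of the master theorem becomes the ``external'' form of definable Ramsey: for every $\m A, \m B, \m Z$ and every $z \in \binom{\m F}{\m Z}$, there is $b \in \binom{\m F}{\m B}$ making $a \mapsto [a, z]$ constant on $\binom{b(\m B)}{\m A}$.

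The main obstacle is the last step: passing between this external form and the internal finite form stated as condition (ii), which asks for a uniform $\m C \in \Age(\m F)$ that works for every joint embedding $\langle c, z \rangle$ of $\m C$ with $\m Z$. The direction ``internal $\Rightarrow$ external'' is immediate after embedding $\m C$ into $\m F$. For the converse, a standard Fra\"iss\'e-theoretic compactness argument suffices: if no such $\m C$ existed, one would assemble the $\m C$-by-$\m C$ counterexamples---using the amalgamation property of $\Age(\m F)$ together with a K\"onig's-lemma-type argument---into a single $z \in \binom{\m F}{\m Z}$ violating the external form. Care is needed to make sure the assembly respects the joint-embedding-pattern formalism. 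Combined with the preceding translations, this delivers the equivalence (i) $\Leftrightarrow$ (ii).
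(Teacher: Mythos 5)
There is a genuine gap, and it sits at your very first step: the claimed equivalence between ``every minimal subflow of $\Aut(\m F)\actson R(\Aut(\m F))$ is trivial'' and ``every Roelcke flow has a fixed point''. For the direction you actually need, you assert that a minimal subflow $M\subset R(G)$ (writing $G=\Aut(\m F)$) is itself a Roelcke flow, i.e.\ a factor of $G\actson R(G)$. This is false in general: the ambit $(\overline{G\cdot x},x)$ is a factor of $(G^{\A},e_{G})$ exactly when $\A_{x}\subset\A$, and for $\A=\ROB(G)$ the inclusion $\ROB(G)\subset\bigcup\{\ROB(G)_{x}:x\in R(G)\}$ is strict (the paper cites Glasner--Megrelishvili for this, and explicitly warns in Section \ref{subsection:Remarks} that a subflow of a Roelcke flow need not be Roelcke). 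The statement ``$R(G)$ has a fixed point'' --- equivalently, every factor of $R(G)$ has one --- is the content of Theorem \ref{thm:fpRoecke}, whose combinatorial counterpart is the \emph{existential} assertion ``there exists a joint embedding $\langle b,z\rangle$\dots''; it is strictly weaker than condition (i) of Theorem \ref{thm:minRoeckeDefRP}.

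The same conflation resurfaces in your last step. The ``external form'' you extract from item (iv) of the master theorem (for every $z\in\binom{\m F}{\m Z}$ there is a good $b$) is, by ultrahomogeneity, just condition (iii) of Theorem \ref{thm:fpRoecke}, and it is \emph{not} equivalent to the definable Ramsey property by compactness: running K\"onig's lemma on counterexamples $\langle c_{n},z_{n}\rangle$ produces a joint embedding pattern $[\phi,z]$ of $\m F$ with $\m Z$, i.e.\ a point $x=[\phi_{1},\phi_{2}]$ of $R(G)$, and this pattern need not be realized by any $z\in\binom{\m F}{\m Z}$ positioned inside $\m F$ relative to the identity copy. What the compactness argument actually proves (Proposition \ref{prop:DefinableRP}) is that the definable Ramsey property is equivalent to the Ramsey property for colorings in $\ROB(G)_{x}$ for \emph{every} $x\in R(G)$, these colorings being of the form $a\mapsto\gamma([\phi_{1}\circ a,\phi_{2}\circ z])$. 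The correct route is therefore Corollary \ref{cor:fpRP'}: triviality of all minimal subflows of $G^{\A}$ corresponds to the Ramsey property for colorings in $\A_{x}$ for all $x$, not merely in $\A$, and it is precisely this quantification over basepoints $x$ that produces the universal quantifier over joint embeddings $\langle c,z\rangle$ in the definable Ramsey property. As written, your argument would establish Theorem \ref{thm:fpRoecke} and mislabel it as Theorem \ref{thm:minRoeckeDefRP}.
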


Besides those discussed above, Theorem \ref{thm:fpRoecke} and Theorem \ref{thm:minRoeckeDefRP} exhibit several interesting features, among which (see Section \ref{subsection:Remarks} for more details): the interaction between amalgamation properties and Ramsey properties (which was first isolated in the pioneering work of Ne\v set\v ril and R\"odl in \cite{NR2}); a distinction between the finite language case and the $\omega$-categorical case (this is connected to the problems mentioned in \cite{BPT}*{Section 7}); the possibility of a proof of it by induction, which sometimes reduces the task to a proof of elementary pigeonhole principles, in the spirit of \cite{T} and \cite{Sol4}; the model-theoretic flavor, which certainly calls for a deeper study in that direction. 

For the right side of the diagram from p.3, the general strategy applies as well, but the corresponding results turn out to be of a rather different flavor. 

\begin{defn}
\label{def:proxcoloring}
Let $\m F$ be a Fra\"iss\'e structure and $\chi$ be a coloring of $\binom{\m F}{\m A}$. Say that $\chi$ is \emph{proximal} when for every $\m D \in \Age(\m F)$, there exists $\m E \in \Age(\m F)$ such that for every $e_{1}, e_{2}\in \binom{\m F}{\m E}$, there exists $d\in \binom{\m E}{\m D}$ such that the colorings $a\mapsto \chi(e_{1}\circ a)$ and $a\mapsto \chi(e_{2}\circ a)$ agree on $\binom{d(\m D)}{\m A}$. 
\end{defn}

\begin{defn}
Let $\m F$ be a Fra\"iss\'e structure. Say that $\m F$ has the \emph{proximal Ramsey property} when for every $\m A, \m B \in \Age(\m F)$ and every finite proximal coloring $\chi$ of $\binom{\m F}{\m A}$, there is $b\in \binom{\m F}{\m B}$ such that $\chi$ is constant on $\binom{b(\m B)}{\m A}$. 
\end{defn}

\begin{thm}

\label{thm:SAProxRP}

Let $\m F$ be a Fra\"iss\'e structure. TFAE: 
\begin{enumerate}
\item[i)] Every zero-dimensional proximal $\Aut(\m F)$-flow has a fixed point. 
\item[ii)] $\m F$ has the proximal Ramsey property. 
\end{enumerate}
When the finite proximal colorings are uniformly dense in the set of all proximal functions, these statements are equivalent to $\Aut(\m F)$ being strongly amenable. (For the precise meaning of this last sentence, see Section \ref{section:colorings}.) 
\end{thm}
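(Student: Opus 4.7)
The plan is to apply the master theorem, Theorem \ref{thm:fpRP}, to the class $\mathcal X$ of proximal $\Aut(\m F)$-flows. First I verify its hypotheses. Factors of proximal flows are trivially proximal, and a closed invariant subset of a proximal flow is proximal: the net witnessing proximality of a pair in the subflow is already present in the ambient flow, and its common limit stays in the subflow. For closure of proximal ambits under suprema, one appeals to the existence of the universal proximal ambit $P(G)$ as the Gelfand dual of the proximal algebra; any supremum is then a factor of $P(G)$ and is therefore proximal. In particular every orbit closure of a point in a proximal flow is itself proximal, so the requirement on orbit closures holds as well. Hence Theorem \ref{thm:fpRP} applies and yields the equivalence of its (iii) and (iv), and under the extra density hypothesis the equivalence with its (i) and (ii).

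Next I identify the algebra $\A$ produced by Theorem \ref{thm:fpRP}: by construction it is the algebra of $f \in \RUCB(G)$ whose orbit closure $\overline{G \bullet f}$ is proximal, i.e., the proximal algebra, whose Gelfand spectrum is $P(G)$. The central combinatorial step is to match the ``finite colorings in $\A$'' of Theorem \ref{thm:fpRP} with the finite proximal colorings of Definition \ref{def:proxcoloring}. A finite coloring $\chi$ of $\binom{\m F}{\m A}$ corresponds, via a standard encoding, to a locally constant (hence $\RUCB$) function on $G = \Aut(\m F)$, whose orbit closure elements are limits of functions $a \mapsto \chi(g \cdot a)$. Using the ultrahomogeneity of $\m F$, the restriction of such an element to a finite substructure $\m E$ is recorded by an embedding in $\binom{\m F}{\m E}$, and proximality of $\overline{G \bullet f_\chi}$ becomes the statement that for every $\m D \in \Age(\m F)$ there exists $\m E \in \Age(\m F)$ such that any two signatures $e_1, e_2 \in \binom{\m F}{\m E}$ can be made to agree on some sub-copy $d(\m D) \subset \m E$. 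This is exactly Definition \ref{def:proxcoloring}.

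With this dictionary in place, condition (iv) of Theorem \ref{thm:fpRP} reads as the proximal Ramsey property of the present (ii), and condition (iii) reads as the present (i), yielding the first equivalence. For the final clause, when the finite proximal colorings are uniformly dense in $\A$, the density hypothesis of Theorem \ref{thm:fpRP} is satisfied, so (i) becomes equivalent also to condition (i) of the master theorem, which is the statement that every proximal $\Aut(\m F)$-flow has a fixed point, i.e., that $\Aut(\m F)$ is strongly amenable. The main technical obstacle is the translation carried out in the middle paragraph: one must carefully distinguish the left action of $G$ on $\RUCB(G)$ from its essentially right action on embeddings, and extract the combinatorial witnesses $\m E$ and $d$ from the analytic data furnished by a net realizing proximality in the orbit closure; the quantification ``for every $\m D$'' reflects the need to capture the uniform approximation of closure elements across the whole scale of finite substructures of $\m F$.
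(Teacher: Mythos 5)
Your proposal follows the paper's route exactly: apply Theorem \ref{thm:fpRP} to the class of proximal flows, identify the associated algebra as $\{f\in\RUCB(G) : G\actson\overline{G\bullet f} \textrm{ proximal}\}$, and match its finite colorings with Definition \ref{def:proxcoloring}. Two soft spots are worth naming. First, your justification of closure of proximal ambits under suprema via the universal proximal ambit $P(G)$ is circular inside this framework: $P(G)$ is obtained as the Gelfand dual of the proximal algebra only \emph{after} Proposition \ref{prop:KA}, whose hypothesis is precisely that closure property; the paper simply cites the classical fact from de Vries, and you should do the same (or prove it via enveloping semigroups). Second, the middle paragraph is where the real content lives and you only gesture at it: to show that a finite coloring $\chi$ lies in the proximal algebra iff it is proximal in the sense of Definition \ref{def:proxcoloring}, the paper first characterizes proximality of an orbit closure $\overline{G\cdot x}$ by the condition that the sets $P(x,x,U)$ are \emph{diagonally syndetic} in $G^{2}$ (Propositions \ref{prop:prox} and \ref{prop:ProxAlg}); that compactness step is what produces a single finite $\m E$ working uniformly over all pairs $e_{1},e_{2}$, and what reduces proximality of all pairs in the closure to a condition on orbit pairs. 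One also needs the small observation (which the paper supplies, using closure under suprema again) that the Ramsey property for finitely many proximal colorings reduces to the single-coloring proximal Ramsey property. None of this changes the strategy, but as written the key equivalence is asserted rather than proved.
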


Theorem \ref{thm:SAProxRP} is, however, less satisfactory than the previous ones on the practical side, for at least two reasons. The first one is the intrusion of a non-trivial condition, of topological nature, which potentially truly limits the use of our combinatorial methods (see Section \ref{subsection:RksProx} for a more detailed discussion). The second one is that at the present stage, because of the difficulty to handle proximal colorings in concrete structures, there is no example where Theorem \ref{thm:SAProxRP} can be used to prove strong amenability by combinatorial means. It can, however, be used to deduce non-trivial combinatorial consequences from strong amenability.  

The same obstacles appear when considering amenability and strongly proximal flows. In fact, this case is, in some sense, even more resistant, as it remains unclear whether a combinatorial description of the relevant class of colorings in the spirit of Definition \ref{def:proxcoloring} exists at all. Nevertheless, a slight modification of the general strategy leads to the Ramsey-theoretic counterpart previously obtained by Moore and by Tsankov:  

\begin{defn}[Moore \cite{M}]

Let $\K$ be a class of finite structures in some first order language. It has the \emph{convex Ramsey property} when for every $\m A, \m B \in \K$, and every $\e>0$, there exists $\m C\in \K$ such that for every finite coloring $\chi$ of $\binom{\m C}{\m A}$, there is a finite convex linear combination $\lambda_{1},...,\lambda_{n}$, and $b_{1},...,b_{n}\in \binom{\m C}{\m B}$ such that the coloring $\displaystyle a\mapsto \sum_{i=1}^{n}\lambda_{i}\chi(b_{i}\circ a)$ is constant up to $\e$ on $\binom{\m B}{\m A}$. 
\end{defn}

\begin{thm}[Moore \cite{M}; Tsankov \cite{Ts1}]

\label{thm:AConvexRP}

Let $\m F$ be a Fra\"iss\'e structure. TFAE: \begin{enumerate}
\item[i)] $\Aut(\m F)$ is amenable. 
\item[ii)] $\Age(\m F)$ has the convex Ramsey property. 
\end{enumerate}
\end{thm}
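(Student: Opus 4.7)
Since the class of strongly proximal flows does not satisfy the hypotheses of Theorem \ref{thm:fpRP}, I would adapt the general strategy by replacing the fixed-point statement on a compactification with a \emph{convex} fixed-point statement, namely the existence of a $G$-invariant mean on $\RUCB(G)$ (with $G=\Aut(\m F)$). By a classical result of Day, this is equivalent to amenability, and by Hahn-Banach it admits the following Reiter-type reformulation, playing the role of finite oscillation stability in Theorem \ref{thm:fpRP}: for every $\e>0$, every finite $F\subset G$, and every $f_1,\dots,f_k\in \RUCB(G)$, there exists a finitely supported probability measure $\nu$ on $G$ with $|\nu(g\cdot f_i)-\nu(f_i)|<\e$ for all $g\in F$ and $i\le k$. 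As in the proof of Theorem \ref{thm:KPT}, I would then use the Fra\"iss\'e dictionary: for $\bar a_0$ enumerating $\m A\in \Age(\m F)$, finite colorings of $\binom{\m F}{\m A}$ correspond to finite-valued, right-$\Stab(\bar a_0)$-invariant functions in $\RUCB(G)$, and every embedding $b\in \binom{\m F}{\m B}$ extends, by ultrahomogeneity, to an automorphism of $\m F$.

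\textbf{From convex Ramsey to amenability.} Given data $(F,(f_i)_{i\le k},\e)$ as in the Reiter condition, I would first reduce by density to the case where the $f_i$'s all factor through a common coloring $\chi_i:\binom{\m F}{\m A}\to\{1,\dots,r\}$ for some $\m A\in\Age(\m F)$. Using joint embedding, pick $\m B\in \Age(\m F)$ containing $\m A$ together with embedded copies of each $g(\m A)$ for $g\in F$. Applying the convex Ramsey property to $\m A,\m B$, and a sufficiently small $\e'$ produces $\m C$; feeding in the restriction to $\binom{\m C}{\m A}$ of the tuple $(\chi_i)$ yields a convex combination witnessed by $\lambda_1,\dots,\lambda_n$ and $b_1,\dots,b_n\in\binom{\m C}{\m B}$. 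Lifting the $b_j$'s to automorphisms $\tilde b_j\in G$ and setting $\nu:=\sum_j\lambda_j\delta_{\tilde b_j}$, the $\e'$-constancy on $\binom{\m B}{\m A}$ translates into the required Reiter bound: for each $g\in F$, both $\nu(g\cdot f_i)$ and $\nu(f_i)$ can be read off as convex averages of $\chi_i$ over embeddings lying in $\binom{\m B}{\m A}$ (thanks to the embedded copy of $g(\m A)$ inside $\m B$), so they differ by at most $\e'$.

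\textbf{From amenability to convex Ramsey, and main obstacle.} Conversely, given $\m A,\m B,\e$ and a finite coloring $\chi$ of $\binom{\m F}{\m A}$ (the infinite version of the convex Ramsey property being enough by compactness), I would encode $\chi$ as indicator functions $f_i\in \RUCB(G)$ and pick a finite $F\subset G$ whose orbit $F\cdot\bar a_0$ describes a fixed copy of $\binom{\m B}{\m A}$. Reiter's condition for $(F,(f_i),\e)$ then yields a finitely supported $\nu=\sum_j\lambda_j\delta_{g_j}$, and restricting each $g_j$ to $\m B$ (after averaging over $\Stab(\m B)$ to make the restrictions well-defined and compatible with $\chi$) supplies the convex combination required by the convex Ramsey property. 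The main technical obstacle in both directions is precisely this transfer between finitely supported measures on $G$ on the one hand, and convex combinations indexed by $\binom{\m F}{\m B}$ on the other: ultrahomogeneity makes extension of embeddings to automorphisms immediate, but the restriction direction requires an averaging over $\Stab(\m B)$, and careful bookkeeping of the $\e$-losses through these reductions is the concrete content of the ``slight modification'' of the strategy of Theorem \ref{thm:fpRP} alluded to in the excerpt.
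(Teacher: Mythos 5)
Your argument is correct and essentially identical to the paper's: the paper's criterion of approximate fixed points for finitely supported measures in $\mathrm{Prob}(S(G))$, read through the identification $C(S(G))=\RUCB(G)$, is precisely your Day--Reiter condition for finitely supported means, and the discretization via the coloring dictionary and the final compactness argument are the same. The only quibble is that in the converse direction no averaging over $\Stab(\m B)$ is needed --- restricting each $g_j^{-1}$ to $\m B$ already yields a well-defined embedding $b_j\in\binom{\m F}{\m B}$, and the Reiter inequality applied to a finite $F$ with $\{\restrict{g}{A}:g\in F\}=\binom{\m B}{\m A}$ immediately gives $2\e$-constancy of $a\mapsto \sum_j\lambda_j\chi(b_j\circ a)$ on $\binom{\m B}{\m A}$.
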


The practical use of this result in order to study amenability is so far rather limited, but there are promising exceptions (see Section \ref{section:stronglyproximal} for a more detailed discussion). 

The paper is organized is follows: The first part is devoted to the proof of two master results, Theorem \ref{thm:fpRP} and Theorem \ref{thm:fpRPcpct}, of which all the previous results are specific incarnations. This proof is based on a general analysis of the existence of fixed points in compactifications of topological groups via the notion of finite oscillation stability (Section \ref{section:oscillationstability}) and on its discretization in Ramsey-theoretic terms (Section \ref{section:RPfp}). The second part of the paper focuses on applications. Section \ref{section:Roelcke} deals with the Roelcke algebra and the Roelcke compactification, leading to Theorems \ref{thm:fpRoecke} and \ref{thm:minRoeckeDefRP}. Equicontinuous and distal flows are treated in Section \ref{section:map}, leading to Theorem \ref{thm:MAPStableRP}. Proximal flows are discussed in Section \ref{section:proximal}, leading to Theorem \ref{thm:SAProxRP}. Strongly proximal flows and amenability are discussed in Section \ref{section:stronglyproximal}, leading to Theorem \ref{thm:AConvexRP}. 


As a final remark before starting: most of the present work can certainly be completed in the context of continuous Fra\"iss\'e theory, in the spirit of \cite{MT}. I leave it to the interested reader to make the appropriate translation. 

\section{Fixed points in compactifications and finite oscillation stability} 

\label{section:oscillationstability}

In this section, given a topological group $G$, the goal is to isolate conditions that characterize the existence of fixed points in certain compactifications of $G$. 

To do so, for the sake of completeness, certain general facts about uniformities on $G$ are first reminded (for a more detailed treatment, see for example \cite{Bou} or \cite{Eng}). Recall that such a structure is a family $\mathcal U$ of subsets of $G\times G$, often called \emph{entourages (of the diagonal)}, which satisfies the following properties:

\begin{enumerate}
\item Every $U\in \mathcal U$ contains the diagonal $\{ (g,g) \in G^{2}: g\in G \}$. 
\item The family $\mathcal U$ is closed under supersets and finite intersections. 
\item If $U$ is in $U$, so is $U^{-1}:=\{ (h,g) \in G^{2}: (g,h)\in U \}$. 
\item If $U\in \mathcal U$, there is $V\in \mathcal U$ so that $V\circ V \subset U$, where $V\circ V$ is the set $$V\circ V:=\{ (g,h) \in G^{2}: \exists k \in G \ \ (g,k)\in V \wedge (k,h)\in V\}$$
\end{enumerate}

Informally, when $(g,h)\in U$, $g$ and $h$ must be thought as $U$-close. Such a structure naturally appears when $G$ is equipped with a metric (in which case a typical entourage is of the form $\{ (g,h) \in G^{2}: d(g,h)<\e\}$ for some $\e>0$), but there is no need for a metric to have a uniformity. Uniform structures constitute the natural framework to express the concepts of uniform continuity and of completion. Here, uniformities will be useful because they will make it possible to manipulate various compactifications of $G$ while staying within $G$. More precisely, every compact topological space admits a unique compatible uniformity. Therefore, when $G$ is compactified (i.e. continuously mapped onto a dense subspace of a compact space), it inherits a natural uniformity, which retains all the information about the whole compactification. In particular, if $G$ acts on the compactification, detecting the existence of fixed points is possible when the interaction between the group operation on $G$ and the uniformity is well understood. 

In the present case, $G$ is not just a set but a topological group, and it carries several natural uniformities. A few of them are described below, starting with the left uniformity, the right uniformity, and the Roelcke uniformity. 

The left uniformity $\mathcal U_{L}$ is generated by those entourages of the form $$V_{U}=\{ (g,h)\in G^{2}: g^{-1}h\in U\}$$ where $U$ is an open neighborhood of the identity element $e_{G}$. It is induced by any left-invariant metric $d_{L}$ compatible with the topology of $G$ (of course, when $G$ is Polish, there is always such a metric). When $G$ is of the form $\Aut(\m F)$, where $\m F$ is a Fra\"iss\'e structure whose underlying set is $\N$, a basis of open neighborhoods of $e_{G}$ consists of clopen subgroups of the form $\Stab(\m A)$, where $\Stab(\m A)$ denotes the pointwise stabilizer of $\m A$, and $\m A\subset \m F$ denotes a finite substructure. In this uniformity, two elements $g, h\in G$ are $\m A$-close when $g$ and $h$ agree on $A$. Thus, the corresponding entourage can be seen as a partition of $G$ into sets of the form $g\Stab(\m A)$, and the corresponding quotient space coincides with the usual (algebraic) quotient $G/\Stab(\m A)$. 


The right uniformity $\mathcal U_{R}$ is defined in a similar way. It is generated by those sets of the form $$V_{U}=\{ (g,h)\in G^{2}: gh^{-1}\in U\}$$ where $U$ is an open neighborhood of the identity element $e_{G}$. It is induced by any right-invariant metric $d_{R}$ compatible with the topology of $G$. When $G$ is of the form $\Aut(\m F)$ and $\m A$ is a finite substructure of $\m F$, two elements $g, h\in G$ are $\m A$-close when $g^{-1}$ and $h^{-1}$ agree on $A$. The corresponding entourage can be seen as a partition of $G$ into sets of the form $\Stab(\m A)g$, and the corresponding quotient space coincides with the right quotient $\Stab(\m A)\backslash G$. For reasons that will become clear later on, we will see in detail in Section \ref{section:colorings} how to think about these objects. 

%

The Roelcke uniformity $\mathcal U_{L}\wedge\mathcal U_{R}$ is the finest uniformity that is coarser than the two previous uniformities. When $G$ is of the form $\Aut(\m F)$, a typical uniform neighborhood of this uniformity is indexed by two finite substructures $\m A, \m Z \subset \m F$, and two elements $g,h\in G$ are $(\m A,\m Z)$-close when they are equal in the double quotient $\Stab(\m A)\backslash G / \Stab(\m Z)$. We will see in Section \ref{section:Roelcke} how to translate this combinatorially. 


So far, uniformities were given via a description of their entourages. For those that are induced by compactifications of $G$, another convenient way to produce them is to use algebras of bounded functions. For example, consider the set $\RUCB(G)$ of all bounded uniformly continuous maps from $(G,d_{R})$ to $\C$ (these maps are called \emph{right-uniformly continuous}). This is a unital C$^{*}$-algebra when equipped with the supremum norm, on which the group $G$ acts continuously by left shift: $g\cdot f(h)=f(g^{-1}h)$. In what follows, we will follow the terminology from \cite[Chapter IV, Section 5]{dV} and will call \emph{left-invariant} the closed C$^{*}$-subalgebras of $\RUCB(G)$ that are invariant under this action. 

Given a $G$-flow $G\actson X$, and $x\in X$, there is a very simple way to produce such an object. Let $\mathrm C(X)$ denote the space of (bounded) continuous functions from $X$ to $\C$. This is a unital C$^{*}$-algebra when equipped with the supremum norm. For $f\in \mathrm C(X)$, define the map $f_{x}:G\rightarrow \C$ by $f_{x}(g)=f(g\cdot x)$. Because the map $g\mapsto g\cdot x$ is always right-uniformly continuous (see \cite[Lemma 2.15]{Pe}), $f_{x}$ is always in $\RUCB(G)$, and one can check that $\{ f_{x} : f\in \mathrm C(X)\}$ is a unital left-invariant closed C$^{*}$-subalgebra of $\RUCB(G)$.  

Conversely, to every unital left-invariant closed C$^{*}$-subalgebra $\A$ of $\RUCB(G)$, one can associate a compact space $G^{\A}$, the \emph{Gelfand space} of $\A$. More details on this classical object will be given in Section \ref{section:ambits}. For the moment, we will just need that this is a compactification of $G$, on which the left-regular action by $G$ on itself naturally extends in a continuous way, and turns $G^{\A}$ into a $G$-flow. Furthermore, considering the point $e_{G}\in G^{\A}$, the map $\mathrm C(G^{\A})\to\A$ defined by  $f\mapsto f_{e_{G}}$ (as in the previous paragraph) realizes an isomorphism of C$^{*}$-algebras. This justifies the identification of $\mathrm C(G^{\A})$ with $\A$. In the sequel, we will use that fact under the following form: the entourages of the uniformity induced on $G$ by the compactification $G\to G^{\A}$ are of the form $\{ (g,h) \in G^{2} : \forall f\in \mathcal F \ \ |f(g)-f(h)|<\e\}$, where $\mathcal F\subset \A$ is finite, and $\e >0$.

\begin{defn}
Let $G$ be a topological group and $\mathcal F \subset \RUCB(G)$. Say that $\mathcal F$ is \emph{finitely oscillation stable} when for every finite $H\subset G$, $\varepsilon >0$, there exists $g\in G$ so that every $f\in \mathcal F$ is constant on $Hg$ up to $\varepsilon$: $$ \forall \e >0 \ \ \exists  g\in G \ \ \forall f\in \mathcal F \ \ \forall h,h' \in H \quad |f(hg)-f(h'g)|<\e$$

\end{defn}

This crucial notion is due to Pestov (for more on this, see \cite{Pe}), even if it was originally stated for left-uniformly continuous fonctions. The reason to deal with right-uniformly continuous fonctions here is that these are the ones that are naturally used to compactify $G$ in a way that is compatible with the left-regular action.  

\begin{prop}
Let $G$ be a topological group, $G\actson X$ a $G$-flow, and $x\in X$. TFAE: 
\begin{enumerate}
\item[i)] The orbit closure $\overline{G\cdot x}$ contains a fixed point. 
\item[ii)] For every finite $\mathcal F\subset C(X)$, the family $\{ f_{x}:f\in \mathcal F\}$ is finitely oscillation stable. 
\end{enumerate}

\end{prop}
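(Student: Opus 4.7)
The plan is to translate directly between the finite oscillation stability of $\{f_{x} : f\in \mathcal F\}$ and the existence of points in $\overline{G\cdot x}$ whose behavior under $H$ is $\e$-invisible to $\mathcal F$. Since $X$ is compact Hausdorff, $C(X)$ separates points, so a point $y \in X$ is fixed iff $f(g\cdot y) = f(y)$ for every $g \in G$ and $f\in C(X)$. On the other side, unpacking the definition: $\{f_{x} : f\in \mathcal F\}$ is finitely oscillation stable exactly when for every finite $H\subset G$ and $\e >0$ there exists $g\in G$ with $|f(hg\cdot x) - f(h'g\cdot x)|<\e$ for all $h,h'\in H$ and $f \in \mathcal F$.

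For (i)$\Rightarrow$(ii), fix a finite $\mathcal F\subset C(X)$, a finite $H\subset G$, and $\e>0$, and let $y\in\overline{G\cdot x}$ be a fixed point. Joint continuity of the action together with continuity of each $f\in \mathcal F$ makes every map $z\mapsto f(h\cdot z)$ continuous on $X$; since $h\cdot y=y$, the common value at $y$ is $f(y)$. By finiteness of $H$ and $\mathcal F$, there is an open neighborhood $V$ of $y$ on which $|f(h\cdot z)-f(y)|<\e/2$ for every $h\in H$ and $f\in \mathcal F$. Since $y\in \overline{G\cdot x}$, pick $g\in G$ with $g\cdot x\in V$; the triangle inequality then yields $|f(hg\cdot x) - f(h'g\cdot x)|<\e$, which is exactly the required oscillation bound.

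For (ii)$\Rightarrow$(i), index a net by triples $\alpha=(\mathcal F,H,\e)$, where $\mathcal F\subset C(X)$ is finite, $H\subset G$ is finite, and $\e>0$, directed so that later triples have larger $\mathcal F$ and $H$ and smaller $\e$. Apply finite oscillation stability to $\{f_{x}:f\in \mathcal F\}$ with the enlarged set $H\cup\{e_{G}\}$ to obtain $g_{\alpha}\in G$ satisfying $|f(hg_{\alpha}\cdot x) - f(g_{\alpha}\cdot x)|<\e$ for every $h\in H$ and $f\in \mathcal F$. By compactness of $X$, extract a subnet along which $g_{\alpha}\cdot x$ converges to some $y\in \overline{G\cdot x}$. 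For any prescribed $g\in G$ and $f\in C(X)$, all triples eventually contain $g$ in $H$ and $f$ in $\mathcal F$ with $\e$ arbitrarily small, so joint continuity of the action (making $g\cdot g_{\alpha}\cdot x\to g\cdot y$) together with continuity of $f$ forces $|f(g\cdot y)-f(y)|\leq \e$ for every $\e>0$. Hence $f(g\cdot y)=f(y)$, and separation of points by $C(X)$ gives $g\cdot y=y$, so $y$ is a fixed point in $\overline{G\cdot x}$.

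The routine parts are the two triangle-inequality estimates; the only point requiring a shade of care is the diagonalization in the reverse direction, where one must cofinally exhaust $C(X)$ and $G$ simultaneously with a decreasing tolerance. This is handled cleanly by the product directed set of triples $(\mathcal F, H, \e)$, and no further topological input beyond joint continuity of the action and separation of points by $C(X)$ is needed.
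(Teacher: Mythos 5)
Your proof is correct and takes essentially the same route as the paper's: the forward direction is the identical continuity-plus-triangle-inequality argument, and your subnet extraction in the reverse direction is just the net-theoretic phrasing of the paper's compactness argument, which instead applies the finite intersection property to the closed sets $A_{\mathcal F,H,\e}=\{ y\in \overline{G\cdot x} : \forall f\in\mathcal F\ \forall h\in H\ |f(h\cdot y)-f(y)|\le \e\}$. No gap; nothing further is needed.
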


\begin{proof}

$i)\Rightarrow ii)$: Fix $\mathcal F\subset C(X)$ finite, $H\subset G$ finite, $\varepsilon>0$. Let $y\in \overline{G\cdot x}$ be a fixed point. Thanks to the continuity of the elements of $\mathcal F$, we may find $g\cdot x$ close enough to $y$ so that for every $f\in \mathcal F$ and every $h\in H$, $|f(h\cdot g\cdot x) - f(h\cdot y)|<\e/2$, i.e. $|f(h\cdot g\cdot x) - f(y)|<\e/2$ since $y$ is fixed. Then, for every $f\in \mathcal F$, $h, h'\in H$, we have: \begin{align*}\ensuremath
\left|f_{x}(hg) - f_{x}(h'g)\right| & =  |f(h\cdot g\cdot x) - f(h'\cdot g\cdot x)| \\
& = |f(h\cdot g\cdot x) - f(y)| + |f(y) - f(h'\cdot g\cdot x)| \\
& <  \e
\end{align*}

$ii)\Rightarrow i)$: For $\mathcal F\subset C(X)$ finite, $H\subset G$ finite, $\varepsilon>0$, define $$ A_{\mathcal F, H, \varepsilon} = \{ y\in \overline{G\cdot x} : \forall f \in \mathcal F \ \ \forall h\in H \ \ |f(h\cdot y) - f(y)|\leq \varepsilon\}$$

This defines a family of closed subsets of $\overline{G\cdot x}$. Thanks to $ii)$, it has the finite intersection property (every finite intersection of its members contains an element of $G\cdot x$). Its intersection is therefore non-empty. Notice now that this intersection consists of fixed points. \end{proof}

As direct consequences, we obtain: 

\begin{prop}
\label{cor:fp}
Let $G$ be a topological group. Let $\A$ be a unital left invariant, closed C$^{*}$-subalgebra of $\RUCB(G)$. TFAE: 
\begin{enumerate}
\item[i)] The flow $G\actson G^{\A}$ has a fixed point. 
\item[ii)] Every finite $\mathcal F\subset \A$ is finitely oscillation stable. 
\end{enumerate}
\end{prop}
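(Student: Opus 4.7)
The plan is to derive this proposition as an immediate specialization of the previous one, applied to the $G$-flow $G\actson G^{\A}$ with the distinguished point $x = e_G$ (viewed in $G^{\A}$ via the canonical dense map $G\to G^{\A}$). Since that map has dense range, the orbit $G\cdot e_G$ is dense in $G^{\A}$, so $\overline{G\cdot e_G} = G^{\A}$, and condition $i)$ of the previous proposition (existence of a fixed point in $\overline{G\cdot x}$) becomes exactly condition $i)$ here (existence of a fixed point in $G\actson G^{\A}$).

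For condition $ii)$, I would invoke the C$^{*}$-algebra isomorphism $C(G^{\A})\to\A$, $f\mapsto f_{e_G}$, recalled just before the statement. Under this bijection, finite subsets of $C(G^{\A})$ are in one-to-one correspondence with finite subsets of $\A$, and the family $\{f_{e_G}:f\in \mathcal F\}$ attached in the previous proposition to a finite $\mathcal F\subset C(G^{\A})$ is precisely the image of $\mathcal F$ in $\A$. Thus condition $ii)$ there becomes condition $ii)$ here verbatim, and the equivalence follows.

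There is essentially no combinatorial obstacle in this argument; the only conceptual point to verify is that the identification of $C(G^{\A})$ with $\A$ via the Gelfand correspondence preserves the notion of finite oscillation stability, which is automatic since that notion is defined purely in terms of values of functions on $G$ and the isomorphism is realized by evaluating the Gelfand transform along the dense orbit of $e_G$.
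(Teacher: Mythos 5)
Your argument is exactly the paper's intended derivation: the paper states this proposition (together with the next one) as a ``direct consequence'' of the preceding proposition, obtained by taking $X=G^{\A}$, $x=e_{G}$, using that $\overline{G\cdot e_{G}}=G^{\A}$ by density of the orbit, and identifying $C(G^{\A})$ with $\A$ via $f\mapsto f_{e_{G}}$. The proposal is correct and matches the paper's approach.
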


\begin{prop}
\label{cor:trivialmin}
Let $G$ be a topological group. Let $\A$ be a unital left-invariant, closed C$^{*}$-subalgebra of $\RUCB(G)$. TFAE: 
\begin{enumerate}
\item[i)] Every minimal subflow of $G\actson G^{\A}$ is trivial. 
\item[ii)] For every $x\in G^{\A}$, $\mathcal F\subset \A$ finite, the family $\{ f_{x}:f\in \mathcal F\}$ is finitely oscillation stable.
\end{enumerate}
\end{prop}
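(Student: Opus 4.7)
The plan is to reduce this statement to the previous Proposition (\ref{cor:fp} in its orbit-closure form, i.e.\ the pointwise version just proved) via a standard Zorn's lemma argument that turns a global minimality hypothesis into a pointwise orbit-closure hypothesis.

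First I would establish the auxiliary equivalence:
\begin{center}
Every minimal subflow of $G\actson G^{\A}$ is trivial $\iff$ for every $x\in G^{\A}$, the orbit closure $\overline{G\cdot x}$ contains a fixed point.
\end{center}
For the forward direction, fix $x\in G^{\A}$ and apply Zorn's lemma to the family of non-empty closed $G$-invariant subsets of $\overline{G\cdot x}$, ordered by reverse inclusion: the compactness of $G^{\A}$ ensures that decreasing chains have non-empty intersections, so a minimal element $M\subseteq \overline{G\cdot x}$ exists. By hypothesis, $M$ is a single point, which is then a fixed point in $\overline{G\cdot x}$. For the reverse direction, let $M$ be a minimal subflow and pick any $x\in M$; by minimality $M=\overline{G\cdot x}$, hence $M$ contains a fixed point $y$ by hypothesis, and $\{y\}$ being closed and $G$-invariant forces $M=\{y\}$ by minimality once more.

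Next I would apply the previous Proposition to each $x\in G^{\A}$ taken in the flow $G\actson G^{\A}$: the orbit closure $\overline{G\cdot x}$ contains a fixed point iff for every finite $\mathcal F\subset C(G^{\A})$, the family $\{f_{x}:f\in\mathcal F\}$ is finitely oscillation stable. Under the canonical identification $\mathrm C(G^{\A})\cong \A$ given by $f\mapsto f_{e_{G}}$ (recalled just before the definition of finite oscillation stability), finite subsets of $\mathrm C(G^{\A})$ correspond bijectively to finite subsets of $\A$, and the associated functions $f_{x}$ are the same in either formulation. Combining this with the auxiliary equivalence above yields the desired equivalence of i) and ii).

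The only genuine subtlety is the Zorn step; everything else is a transparent pointwise application of the already-proved Proposition together with the algebra/compactification correspondence. I do not expect a real obstacle here, only the bookkeeping required to make sure the identification $\mathrm C(G^{\A})\cong\A$ sends the evaluation maps $f\mapsto f_{x}$ (for $x\in G^{\A}$) to the functions used in the definition of finite oscillation stability in $\A$.
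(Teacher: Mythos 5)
Your proposal is correct and is exactly the argument the paper has in mind: it states this proposition (together with Proposition \ref{cor:fp}) as a ``direct consequence'' of the preceding orbit-closure proposition, and the Zorn's lemma reduction of ``every minimal subflow is trivial'' to ``every orbit closure contains a fixed point,'' combined with the identification $\mathrm C(G^{\A})\cong\A$, is precisely the intended fill-in.
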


In the sequel, we will write $\mathcal F_{x}$ for the family $\{ f_{x}:f\in \mathcal F\}$. Note that the inclusion $\A\subset \bigcup\{\A_{x}:x\in G^{\A}\}$ may be strict. This is for example the case for the Roelcke algebra $\ROB(G)$ defined in Section \ref{section:Roelcke} (see \cite{GlM2}*{Corollary 4.11}). However, there are interesting cases where equality holds, e.g. $\RUCB(G)$ itself, the algebra $\WAP(G)$ of weakly almost periodic functions on $G$ (for a definition, see Section \ref{section:map}), or any of its closed left-invariant subalgebras \cite{BJM}*{Chapter III, Lemma 8.8}. A more detailed discussion about this topic and its dynamical interpretation in terms of point-universality can be found in \cite{GlM1} (Definition 2.5 and related material), \cite{GlM2}*{Sections 3 and 4} and \cite{GlM3}*{Remarks 4.15 and 4.16} by Glasner-Megrelishvili.

\section{Ramsey properties as natural combinatorial counterparts to the existence of fixed points}

\label{section:RPfp}

The purpose of this section is to show that when $G$ is of the form $\Aut(\m F)$ for some Fra\"iss\'e structure $\m F$, the existence of fixed points expressed in Proposition \ref{cor:fp} and Proposition \ref{cor:trivialmin} naturally translates combinatorially as Ramsey-theoretical statements. Precisely, our aim here is first to prove Theorem \ref{thm:fpRPcpct} below, and then Theorem \ref{thm:fpRP} (for definitions, see Sections \ref{section:colorings} and \ref{section:ambits}): 

\begin{thm}
\label{thm:fpRPcpct}
Let $\m F$ be a Fra\"iss\'e structure, and let $\A$ be a unital, left-invariant, closed C$^{*}$-subalgebra of $\RUCB(\Aut(\m F))$. TFAE: 
\begin{enumerate}
\item[i)] The flow $\Aut(\m F)\actson \Aut(\m F)^{\A}$ has a fixed point. 
\item[ii)] For every $\e>0$, $\m F$ has the Ramsey property up to $2\e$ for the finite colorings in $(\A)_{\e}$. 
\end{enumerate}

Those imply the following equivalent statements: 
\begin{enumerate}
\item[iii)] Every zero-dimensional factor of $\Aut(\m F)\actson \Aut(\m F)^{\A}$ has a fixed point. 
\item[iv)] $\m F$ has the Ramsey property for the finite colorings in $\A$. 
\end{enumerate}

When the finite colorings are dense in $\A$, all those statements are equivalent. 
\end{thm}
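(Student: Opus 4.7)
The plan rests on Proposition \ref{cor:fp}: statement (i) is equivalent to saying that every finite $\mathcal F\subset\A$ is finitely oscillation stable. The heart of the proof is then to translate this condition into the Ramsey-theoretic language of (ii)--(iv), via a dictionary between right-uniformly continuous functions on $\Aut(\m F)$ and colorings of embeddings of finite substructures of $\m F$. Every $f\in\A\subset\RUCB(G)$ is a uniform limit of functions constant on right cosets $\Stab(\m A)g$ for some finite $\m A\subset\m F$, and any such function is precisely a $\C$-valued coloring of $\binom{\m F}{\m A}$ via the parametrization $g\mapsto g^{-1}\upharpoonright A$. A finite $H\subset G$ is encoded by choosing $\m B\in\Age(\m F)$ large enough to contain $h^{-1}(A)$ for each $h\in H$, so that $\{h^{-1}\upharpoonright A : h\in H\}$ is identified with a subset of $\binom{\m B}{\m A}$, which one may freely enlarge to all of $\binom{\m B}{\m A}$. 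Under this identification, the right translate $Hg$ corresponds to $\binom{b(\m B)}{\m A}$ with $b=g^{-1}\upharpoonright B$, so ``$f$ constant on $Hg$ up to $\delta$'' and ``$\chi$ constant on $\binom{b(\m B)}{\m A}$ up to $\delta$'' say the same thing.

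For (i)$\Rightarrow$(ii), given a finite coloring $\chi\in(\A)_{\e}$, pick $f\in\A$ with $\|\tilde\chi-f\|_{\infty}\leq\e$; apply finite oscillation stability of $\{f\}$ to the finite set $H$ encoding $\m B\supset\m A$, with an arbitrarily small parameter $\e'>0$. A triangle inequality transfers constancy of $f$ on $Hg$ (up to $\e'$) into constancy of $\chi$ on the corresponding $\binom{b(\m B)}{\m A}$ up to $2\e+\e'$, giving the Ramsey property up to $2\e$ in the limit. For (ii)$\Rightarrow$(i), given finite $\mathcal F\subset\A$, finite $H$ and $\e>0$: approximate each $f\in\mathcal F$ by a finite coloring $\chi_{f}$ within a parameter $\alpha>0$ to be chosen, form their common refinement, and encode it as a single $\C$-valued finite coloring $\chi=\sum_{f}M^{i_{f}}\chi_{f}$ with $M$ larger than the maximum of the ranges of the $\chi_{f}$'s (so distinct tuples produce integer values separated by at least $1$). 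Since $\A$ is a vector space, $\chi$ lies in $(\A)_{\alpha\sum M^{i_{f}}}$, and (ii) applied to $\chi$ yields constancy of $\chi$ on some $\binom{b(\m B)}{\m A}$ up to $2\alpha\sum M^{i_{f}}$; taking $\alpha$ small enough that this quantity is less than $1$ forces exact constancy of $\chi$, hence of each $\chi_{f}$, and the triangle inequality then gives approximate constancy of each $f\in\mathcal F$ on $Hg$ up to $\e$.

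For (iii)$\Leftrightarrow$(iv), zero-dimensional factors of $G\actson G^{\A}$ correspond to unital closed left-invariant C$^{*}$-subalgebras of $\A$ generated by indicator functions, equivalently to finite-valued colorings that genuinely lie in $\A$; a fixed point in such a factor amounts to a single color reached on every $\binom{b(\m B)}{\m A}$, which is precisely the exact Ramsey property for finite colorings in $\A$. Implication (i)$\Rightarrow$(iii) is automatic since factors inherit fixed points, giving the one-way chain (i)$\Rightarrow$(ii) and (i)$\Rightarrow$(iii)$\Leftrightarrow$(iv) even without density. Under the density hypothesis, every $f\in\A$ is a uniform limit of finite colorings in $\A$, so a finite coloring in $(\A)_{\e}$ is $2\e$-close to a finite coloring in $\A$, and (iv) transfers to (ii) with the expected $2\e$ loss.

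The main technical hurdle is the combination step in (ii)$\Rightarrow$(i): collapsing a finite family $\mathcal F$ into a single $\C$-valued finite coloring while keeping it inside a small neighborhood of $\A$. The base-$M$ encoding dilates the approximation error by $\sum M^{i_{f}}$, and this dilation must be counterbalanced by sharpening the resolution $\alpha$ of the initial approximations of the $f\in\mathcal F$; since $M$ and $|\mathcal F|$ are fixed in advance, one can always choose $\alpha$ sufficiently small. An iterated Ramsey argument handling $\mathcal F$ one function at a time provides a conceptually cleaner alternative, provided one has a mildly relativized (``monochromatic substructure inside a prescribed one'') form of (ii), which is a direct consequence of the stated version by compactness.
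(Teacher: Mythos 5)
Your overall architecture matches the paper's: reduce (i) to finite oscillation stability of every finite $\mathcal F\subset\A$ via Proposition \ref{cor:fp}, translate via the dictionary $Hg\leftrightarrow\binom{b(\m B)}{\m A}$ (this is exactly Proposition \ref{prop:aOSRamsey}), and handle (iii)$\Leftrightarrow$(iv) by passing to the subalgebra of $\A$ generated by its finite colorings, whose Gelfand space is zero-dimensional. The directions (i)$\Rightarrow$(ii), (i)$\Rightarrow$(iii), (iii)$\Leftrightarrow$(iv) and the density case are all essentially as in the paper, up to minor bookkeeping (to land exactly at oscillation $2\e$ rather than $2\e+\e'$ you should use that $(\A)_\e$ is the \emph{open} $\e$-neighbourhood, i.e.\ choose $f_\chi$ and $\eta>0$ with $\|\chi-f_\chi\|_\infty+\eta<\e$, as the paper does).

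The genuine problem is your combination step in (ii)$\Rightarrow$(i). First, it is unnecessary: the Ramsey property for colorings in $\mathcal F$ is \emph{defined} in Section \ref{section:colorings} for finite \emph{sets} $\mathcal C\subset\mathcal F$ of finite colorings, all monochromatized on the same $\binom{b(\m B)}{\m A}$; so (ii) already hands you a single $g$ working simultaneously for the whole family $\{\chi_f: f\in\mathcal F\}$, and the paper's proof of Proposition \ref{prop:OSRP} just uses this. Second, your base-$M$ encoding does not work as stated. To keep $\chi=\sum_f M^{i_f}\chi_f$ inside a small neighbourhood of $\A$ you must take the $\chi_f$ with their actual complex values (each within $\alpha$ of the corresponding $f$ in sup norm); but then distinct tuples $(\chi_f(a))_f\neq(\chi_f(a'))_f$ need not produce values of $\chi$ separated by $1$ --- they can coincide outright (e.g.\ $\chi_1(a)-\chi_1(a')=M(\chi_2(a')-\chi_2(a))$), so neither exact constancy of $\chi$ nor constancy up to $2\alpha\sum M^{i_f}<1$ forces constancy of each $\chi_f$. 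Relabelling the $\chi_f$ by integers restores the separation but destroys membership in $(\A)_\delta$, so (ii) no longer applies to $\chi$. Your proposed fallback --- iterating over $\mathcal F$ one coloring at a time --- is also not a ``direct consequence by compactness'': it needs a relativized version of (ii) locating the monochromatic copy inside a prescribed substructure, and since $\A$ is only left-invariant, the right-translates $g\bullet\chi$ one would use to relativize need not stay in $(\A)_\e$. The correct fix is simply to invoke the definition of the Ramsey property for the finite set $\{\chi_f:f\in\mathcal F\}\subset(\A)_{\e/4}$ and conclude as in Proposition \ref{prop:OSRP}.
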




Even though Theorem \ref{thm:fpRP} and Theorem \ref{thm:fpRPcpct} look quite similar, we will see in the following sections that both of them will be handy when dealing with practical situations. This will lead to Theorem \ref{thm:MAPStableRP}, Theorem \ref{thm:fpRoecke} and Theorem \ref{thm:minRoeckeDefRP}. However, other natural algebras do not seem to admit approximations by finite colorings. We will see two such examples later on, with the proximal and the strongly proximal algebras.  

\subsection{Finite oscillation stability and Ramsey properties}

\label{section:colorings}

Let $\m F$ be a Fra\"iss\'e structure, whose underlying set is $\N$. As before, for a finite substructure $\m A \subset \m F$, let $\Stab(\m A) \subset \Aut(\m F)$ denote the pointwise stabilizer of $\m A$. Given any $g\in G$, its equivalence class $\bar g$ in the right quotient $\Stab(\m A)\backslash \Aut(\m F)$ is the set of all those elements of $G$ that are $\m A$-close to $g$ (i.e. some sort of ball of radius $\m A$) relative to the right uniformity, and can be thought as the restriction $\restrict{g^{-1}}{A}$, an embedding of $\m A$ into $\m F$. 

Furthermore, because $\m F$ is ultrahomogeneous, every element of $\binom{\m F}{\m A}$ is of that form. In other words, we can identify $\Stab(\m A)\backslash \Aut(\m F)$ and $\binom{\m F}{\m A}$. In addition, since every element of $\Stab(\m A)\backslash \Aut(\m F)$ can be thought as a ball for the right uniformity, every coloring of $\binom{\m F}{\m A}$, that is, every map $\bar\chi:\binom{\m F}{\m A}\rightarrow \C$, can be seen as an element $\chi$ of $\RUCB(\Aut(\m F))$ that is constant on small enough balls and satisfies $\chi(g)=\bar\chi(\bar g)$. In the sequel, we will usually not make any notational distinction between $\chi$ and $\bar\chi$, and by a \emph{coloring in} (resp. \emph{finite coloring in}) $\RUCB(\Aut(\m F))$, we will mean exactly a function $\chi$ of that kind (resp. with finite range). From this point of view, note that even if we allow $\m A$ to range over the set of all finite substructures of $\m F$, every finite set $\mathcal C \subset \RUCB(\Aut(\m F))$ of finite colorings can be seen as a finite set of finite colorings defined on the same set $\binom{\m F}{\m A}$, with values in a common set. 




\begin{defn}
Let $\mathcal F \subset \RUCB(\Aut(\m F))$ and $\e>0$. Say that $\m F$ has the \emph{Ramsey property (resp. Ramsey property up to $\e$) for colorings in $\mathcal F$} when for every $\m A, \m B$ in $\Age(\m F)$, every finite set $\mathcal C \subset \mathcal F$ of finite colorings of $\binom{\m F}{\m A}$, there exists $b\in \binom{\m F}{\m B}$ such that every $\chi\in \mathcal C$ is constant (resp. constant up to $\e$) on $\binom{b(\m B)}{\m A}$. 
\end{defn}

Note that as it is defined, the Ramsey property for colorings in $\mathcal F$ is a property of $\m F$, as opposed to a property of $\Age(\m F)$. We will meet several instances where it completely finitizes (e.g. Theorem \ref{thm:MAPStableRP}, Theorem \ref{thm:fpRoecke} and Theorem \ref{thm:minRoeckeDefRP}), but for the moment, this is only feasible via a case-by-case analysis.

\begin{prop}
\label{prop:aOSRamsey}
Let $\m A \in \Age(\m F)$, $\mathcal C$ be a finite set of finite colorings of $\binom{\m F}{\m A}$, $\e>0$. TFAE: 
\begin{enumerate}
\item[i)] For every finite $H\subset G$, there exists $g\in G$ so that every $\chi \in \mathcal C$ is constant up to $\varepsilon$ on $Hg$. 

\item[ii)] For every $\m B \in \Age(\m F)$, there exists $b\in \binom{\m F}{\m B}$ such that every $\chi\in \mathcal C$ is constant up to $\e$ on $\binom{b(\m B)}{\m A}$. 
\end{enumerate} 
\end{prop}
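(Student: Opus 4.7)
\textbf{Proof plan for Proposition \ref{prop:aOSRamsey}.} The whole content of the proposition is a translation between the group-theoretic picture (elements of $G=\Aut(\m F)$, right translates $Hg$) and the combinatorial picture (embeddings, sets of the form $\binom{b(\m B)}{\m A}$), along the identification $\Stab(\m A)\backslash G \cong \binom{\m F}{\m A}$ via $\bar g \mapsto g^{-1}\restriction A$ recalled just before the statement. Under this identification, a coloring $\chi\in\mathcal C$ of $\binom{\m F}{\m A}$ is pulled back to a right-$\Stab(\m A)$-invariant function on $G$ with $\chi(g)=\bar\chi(g^{-1}\restriction A)$, so the whole proof amounts to checking that ``$\chi$ constant up to $\e$ on a right translate $Hg$'' matches ``$\bar\chi$ constant up to $\e$ on some $\binom{b(\m B)}{\m A}$'' for an appropriate choice of the other datum.

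For $(ii)\Rightarrow(i)$: given a finite $H\subset G$, form the finite set $B:=\bigcup_{h\in H}h^{-1}(A)\subset \N$ and consider it as a substructure $\m B$ of $\m F$; then for each $h\in H$, the map $h^{-1}\restriction A$ is a well-defined element of $\binom{\m B}{\m A}$. Applying $(ii)$ with this $\m B$ produces $b\in\binom{\m F}{\m B}$, and by ultrahomogeneity of $\m F$ one picks $g\in G$ with $g^{-1}\restriction B = b$. Then for every $h\in H$,
\[ (hg)^{-1}\restriction A \;=\; g^{-1}\circ (h^{-1}\restriction A) \;=\; b\circ (h^{-1}\restriction A) \;\in\;\binom{b(\m B)}{\m A}, \]
so $\chi(hg)=\bar\chi(b\circ(h^{-1}\restriction A))$ lies in a set on which $\bar\chi$ is constant up to $\e$; this is exactly the desired oscillation bound on $Hg$.

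For $(i)\Rightarrow(ii)$: given $\m B\in\Age(\m F)$, fix any copy of $\m B$ inside $\m F$ and, for each $a'\in\binom{\m B}{\m A}$, use ultrahomogeneity of $\m F$ to choose $h_{a'}\in G$ with $h_{a'}^{-1}\restriction A = a'$; let $H=\{h_{a'}:a'\in\binom{\m B}{\m A}\}$, a finite subset of $G$. Apply $(i)$ to obtain $g\in G$ such that every $\chi\in\mathcal C$ is constant up to $\e$ on $Hg$; then set $b:=g^{-1}\restriction B\in\binom{\m F}{\m B}$. The same computation as above shows that $(h_{a'}g)^{-1}\restriction A = b\circ a'$, so as $a'$ ranges over $\binom{\m B}{\m A}$, the image of $Hg$ in $\binom{\m F}{\m A}$ is exactly $\binom{b(\m B)}{\m A}$, and the constant-up-to-$\e$ condition transfers verbatim.

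The only slightly delicate point is bookkeeping $g$ versus $g^{-1}$: the quotient $\Stab(\m A)\backslash G$ encodes embeddings via $g\mapsto g^{-1}\restriction A$, so left multiplication by $h\in H$ on $G$ (giving $hg$) corresponds on the embedding side to precomposition by $h^{-1}\restriction A$, which is why the combinatorial side sees the \emph{post}-composition $b\circ a'$ rather than some other pattern. Once that is tracked correctly, no further idea is needed, and the equivalence falls out of the finiteness of $H$ in one direction and the finiteness of $\binom{\m B}{\m A}$ in the other.
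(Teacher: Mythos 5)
Your proof is correct and follows essentially the same route as the paper's: both rest on the identification $\Stab(\m A)\backslash G \cong \binom{\m F}{\m A}$ via $g\mapsto g^{-1}\restriction A$, with $\m B:=\bigcup_{h\in H}h^{-1}(A)$ playing the role of the paper's structure containing $\overline H$ in one direction, and representatives $h_{a'}$ with $h_{a'}^{-1}\restriction A=a'$ realizing the paper's choice of $H$ with $\binom{\m B}{\m A}\subset\overline H$ in the other. The bookkeeping of $g$ versus $g^{-1}$ is handled correctly, so nothing further is needed.
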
 

\begin{proof}

The proof hinges on the following observation: Let $\m A$ be a finite substructure of $\m F$ and $H$ be a finite subset of $\Aut(\m F)$. For $h\in \Aut(\m F)$, recall that $\bar h$ denotes the equivalence class of $h$ in the quotient $\Stab(\m A)\backslash \Aut(\m F)$. As we have seen, $\bar h$ can be thought as the restriction $\restrict{h^{-1}}{A}$, so $\overline H:=\{ \bar h:h\in H\}$ can be seen as a finite set of embeddings of $\m A$ into $\m F$. As such, it is contained in some set of the form $\binom{\m B}{\m A}$ for some finite substructure $\m B$ of $\m F$. Next, if $g$ is fixed in $G$, we have: $$\overline{Hg} = \{ \overline{hg}:h\in H\}=\{ \restrict{g^{-1}\circ h^{-1}}{A} : h\in H\} \subset \binom{g^{-1}(\m B)}{\m A}$$

Conversely, if $\m B \subset \m F$ is a finite substructure, then there is $H\subset \Aut(\m F)$ finite so that $\binom{\m B}{\m A}\subset \overline H$, and if $g\in \Aut(\m F)$, then $\binom{g^{-1}(\m B)}{\m A} \subset \overline{Hg}$.  

We now go on with the proof. Assume that for every finite $H\subset G$, there exists $g\in G$ so that every $\chi \in \mathcal C$ is constant up to $\varepsilon$ on $Hg$. Let $H\subset \Aut(\m F)$ be a finite set so that $\binom{\m B}{\m A} \subset \overline H$. Find $g\in \Aut(\m F)$ such that every $\chi\in \mathcal C$ is constant up to $\varepsilon$ on $Hg$. Then every $\chi\in \mathcal C$ is constant up to $\e$ on $Hg$, and hence on $\overline{Hg}\supset \binom{g^{-1}(\m B)}{\m A}$. Therefore, it suffices to set $b=\restrict{g^{-1}}{B}$. 

Conversely, fix $H\subset \Aut(\m F)$ finite. Let $\m B$ be a finite substructure of $\m F$ so that $\overline{H} \subset \binom{\m B}{\m A}$. By hypothesis, find $b\in \binom{\m F}{\m B}$ such that every $\chi\in \mathcal C$ is constant up to $\e$ on $\binom{b(\m B)}{\m A}$. Take $g\in \Aut(\m F)$ such that $g^{-1}$ extends $b$. Then $Hg\subset \overline{Hg}\subset \binom{g^{-1}(\m B)}{\m A}$ and every $\chi\in \mathcal C$ is constant up to $\varepsilon$ on $Hg$.  
\end{proof}

\begin{prop}
\label{prop:OSRP}
Let $\m F$ be a Fra\"iss\'e structure, and let $\A$ be a unital, left-invariant, closed C$^{*}$-subalgebra of $\RUCB(\Aut(\m F))$. TFAE: 
\begin{enumerate}
\item[i)] Every finite $\mathcal F\subset \A$ is finitely oscillation stable.
\item[ii)] For every $\e>0$, $\m F$ has the Ramsey property up to $2\e$ for colorings in $(\A)_{\e}$. 
\end{enumerate}
\end{prop}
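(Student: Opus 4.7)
The substantive dictionary between finite oscillation stability and Ramsey-type statements for finite colorings of $\binom{\m F}{\m A}$ has already been established in Proposition~\ref{prop:aOSRamsey}. The remaining task in proving Proposition~\ref{prop:OSRP} is simply to bridge the gap between ``arbitrary finite subsets of $\A$'' and ``finite sets of finite colorings $\e$-close to elements of $\A$''. This bridging is carried out by uniform-continuity approximation, and the factor $2$ in ``$2\e$'' will come from a single triangle-inequality application of the form $|\chi(x) - \chi(y)| \leq \|f - \chi\|_\infty + |f(x) - f(y)| + \|f - \chi\|_\infty$.

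For the direction $(i) \Rightarrow (ii)$, I would fix $\e>0$ and a finite family $\mathcal C$ of finite colorings of some common $\binom{\m F}{\m A}$ with $\mathcal C \subset (\A)_\e$. For each $\chi \in \mathcal C$, select $f_\chi \in \A$ with $\|\chi - f_\chi\|_\infty \leq \e$. The finite set $\{f_\chi\}$ lies in $\A$, so by $(i)$ it is finitely oscillation stable; hence for every finite $H \subset \Aut(\m F)$ and every auxiliary $\delta > 0$, there is $g \in \Aut(\m F)$ on which each $f_\chi$ is constant up to $\delta$ on $Hg$. The triangle inequality immediately upgrades this into each $\chi \in \mathcal C$ being constant up to $2\e + \delta$ on $Hg$. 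Combining with Proposition~\ref{prop:aOSRamsey} (and letting $\delta$ shrink as is allowed by the convention for ``up to $2\e$'') yields, for every $\m B \in \Age(\m F)$, an embedding $b \in \binom{\m F}{\m B}$ such that every $\chi \in \mathcal C$ is constant up to $2\e$ on $\binom{b(\m B)}{\m A}$.

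For the direction $(ii) \Rightarrow (i)$, I would fix a finite $\mathcal F \subset \A$ and a target oscillation $\eta > 0$, and choose $\e > 0$ small (e.g.\ $4\e \leq \eta$). Since every $f \in \mathcal F$ is right-uniformly continuous, a single finite $\m A \subset \m F$ can be fixed such that every $f \in \mathcal F$ has oscillation at most $\e$ on each right coset of $\Stab(\m A)$; then a finite $\e$-net in the (bounded) range of the $f$'s yields finite colorings $\chi_f$ of $\binom{\m F}{\m A}$ satisfying $\|f - \chi_f\|_\infty \leq 2\e$, so that each $\chi_f$ belongs to $(\A)_{2\e}$. Applying $(ii)$ at level $2\e$ to the finite family $\{\chi_f\}$, and then invoking Proposition~\ref{prop:aOSRamsey} to translate the Ramsey statement into finite oscillation stability of $\{\chi_f\}$ up to $4\e$, the triangle inequality again gives that $\mathcal F$ is finitely oscillation stable up to $4\e + 4\e \leq \eta$ (adjusting $\e$ and strict/non-strict conventions as needed).

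\textbf{Main obstacle.} The argument is essentially a wrapping of Proposition~\ref{prop:aOSRamsey} by an approximation layer, so there is no serious conceptual difficulty. The only care to be taken is the bookkeeping of the approximation constants through the triangle inequality, so as to land exactly at the stated bound $2\e$ rather than $3\e$ or $4\e$; this forces one to fix the strict/non-strict conventions in the definitions of $(\A)_\e$ and of ``constant up to $\e$'' consistently, and to exploit the freedom in the auxiliary parameter $\delta > 0$ coming from finite oscillation stability to absorb the residual slack.
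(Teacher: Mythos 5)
Your proposal is correct and follows essentially the same route as the paper's proof: approximate in both directions between finite subsets of $\A$ and finite sets of finite colorings, push the error through the triangle inequality, and let Proposition~\ref{prop:aOSRamsey} do the translation between oscillation stability on sets $Hg$ and the Ramsey statement on sets $\binom{b(\m B)}{\m A}$. Your constants are slightly looser than the paper's (which arranges $\|\chi-f_\chi\|_\infty+\eta<\e$ so as to land strictly below $2\e$, and uses $\e/4$-approximations in the converse), but as you note this is only bookkeeping and does not affect the argument.
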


\begin{proof}

Assume that every finite $\mathcal F\subset \A$ is finitely oscillation stable. Fix $\m A$ in $\Age(\m F)$, $\mathcal C \subset (\A)_{\e}$ a finite set of finite colorings of $\binom{\m F}{\m A}$, $H\subset \Aut(\m F)$ finite. Fix $\{ f_{\chi}: \chi\in \mathcal C\}\subset \A$ and $\eta >0$ so that $\| \chi - f_{\chi}\|_{\infty}+\eta<\e$ for every $\chi \in \mathcal C$. By finite oscillation stability of $\{ f_{\chi}: \chi\in \mathcal C\}$, find $g\in \Aut(\m F)$ so that every $f_{\chi}$ is constant up to $\eta$ on $Hg$. Then every $\chi \in \mathcal C$ is constant up to $2\e$ on $Hg$. Thanks to Proposition \ref{prop:aOSRamsey}, we deduce that for every $\m B \in \Age(\m F)$, there exists $b\in \binom{\m F}{\m B}$ such that every $\chi\in \mathcal C$ is constant up to $2\e$ on $\binom{b(\m B)}{\m A}$. This is exactly what we needed to prove. 

Conversely, assume that $ii)$ holds, and fix $\mathcal F\subset \mathcal A$ finite, $\e>0$, $H\subset \Aut(\m F)$ finite. Let $\{ \chi_{f} : f\in \mathcal F\}$ be a finite family of finite colorings in $(\A)_{\e/4}$ so that $\| f-\chi_{f}\|_{\infty}<\e/4$ for every $f\in \mathcal F$. Thanks to Proposition \ref{prop:aOSRamsey}, $ii)$ implies that there is $g\in \Aut(\m F)$ so that every $\chi_{f}$ is constant up to $\e/2$ on $Hg$. Then, every $f\in \mathcal F$ is constant up to $\e$ on $Hg$ and $\mathcal F$ is finitely oscillation stable. 
\end{proof}

\subsection{Ramsey properties and fixed point in compactifications}

In this section, we prove Theorem \ref{thm:fpRPcpct}. Tying up Proposition \ref{prop:OSRP} with Proposition \ref{cor:fp}, we obtain: 

\begin{prop}
\label{cor:fpaRP}
Let $\m F$ be a Fra\"iss\'e structure, and let $\A$ be a unital, left-invariant, closed C$^{*}$-subalgebra of $\RUCB(\Aut(\m F))$. TFAE: 
\begin{enumerate}
\item[i)] The flow $\Aut(\m F)\actson \Aut(\m F)^{\A}$ has a fixed point. 
\item[ii)] For every $\e>0$, $\m F$ has the Ramsey property up to $2\e$ for colorings in $(\A)_{\e}$. 
\end{enumerate}
\end{prop}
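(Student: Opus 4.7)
The plan is essentially to observe that Proposition \ref{cor:fpaRP} is a direct corollary of the two results cited immediately before it, namely Proposition \ref{cor:fp} and Proposition \ref{prop:OSRP}, and no new argument is required.

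Concretely, I would proceed as follows. First, specialize Proposition \ref{cor:fp} to the topological group $G := \Aut(\m F)$ and the algebra $\A$ in the statement. Since $\A$ is assumed to be unital, left-invariant and closed in $\RUCB(\Aut(\m F))$, the hypotheses of Proposition \ref{cor:fp} are met, and that proposition yields the equivalence between condition (i), namely the existence of a fixed point in $\Aut(\m F)\actson \Aut(\m F)^{\A}$, and the condition that every finite $\mathcal F\subset\A$ is finitely oscillation stable.

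Second, I would apply Proposition \ref{prop:OSRP} to the same algebra $\A$. Its statement exactly equates the condition that every finite $\mathcal F\subset\A$ is finitely oscillation stable with condition (ii), namely that for every $\e>0$, $\m F$ has the Ramsey property up to $2\e$ for the finite colorings in $(\A)_{\e}$. Chaining the two equivalences gives (i) $\Leftrightarrow$ (ii), which is what needs to be proved.

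Since both ingredients are already established, there is no genuine obstacle here; the only point worth flagging is that one must check that the intermediate pivot, finite oscillation stability of every finite subset of $\A$, is formulated identically in both propositions, which it is. The substantive content has been pushed into the Ramsey-theoretic reformulation of finite oscillation stability carried out in Proposition \ref{prop:OSRP} (via the identification of $\Stab(\m A)\backslash \Aut(\m F)$ with $\binom{\m F}{\m A}$ and the translation between sets of the form $Hg$ and $\binom{b(\m B)}{\m A}$), and into the characterization of fixed points in Gelfand compactifications carried out in Proposition \ref{cor:fp}. The present proposition merely packages them together.
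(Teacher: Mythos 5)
Your proposal is correct and matches the paper exactly: the paper introduces this proposition with the phrase ``Tying up Proposition \ref{prop:OSRP} with Proposition \ref{cor:fp}, we obtain,'' i.e.\ it is presented precisely as the concatenation of those two equivalences, with no further argument. Your identification of finite oscillation stability of every finite subset of $\A$ as the common pivot is exactly the intended reading.
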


%
%
%
%

Note the presence of the error term $2\e$ in item $ii)$ of the previous equivalence. Its appearance seems necessary in full generality, but can be removed under the additional assumption that finite colorings are dense in $\A$. In order to see this, observe first that considering all $\e>0$ simultaneously in Proposition \ref{prop:aOSRamsey}, one easily obtains:  

\begin{prop}
\label{prop:OSRamsey}
Let $\m A \in \Age(\m F)$, $\mathcal C$ be a finite set of finite colorings of $\binom{\m F}{\m A}$. TFAE: 
\begin{enumerate}
\item[i)] $\mathcal C$ is finitely oscillation stable. 
\item[ii)] For every $\m B \in \Age(\m F)$, there exists $b\in \binom{\m F}{\m B}$ such that every $\chi\in \mathcal C$ is constant on $\binom{b(\m B)}{\m A}$. 
\end{enumerate} 
\end{prop}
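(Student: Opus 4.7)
The plan is to derive Proposition \ref{prop:OSRamsey} as a direct corollary of Proposition \ref{prop:aOSRamsey}, by exploiting the discreteness of the value sets of the finite colorings in $\mathcal{C}$. The passage preceding the statement hints at precisely this: because the $\e$ appearing in Proposition \ref{prop:aOSRamsey} is arbitrary, and because finite colorings cannot be ``almost constant'' without being exactly constant, we can cleanly pass from the $\e$-approximate version to an exact one.

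For the direction $i)\Rightarrow ii)$, I would start by noting that the union of the ranges of the finitely many $\chi\in\mathcal{C}$ is a finite subset of $\mathbb{C}$. Let $\delta>0$ be strictly smaller than the minimum of $|c-c'|$ taken over all pairs of distinct values $c,c'$ attained by any single $\chi\in\mathcal{C}$. Then for any subset $S\subseteq G$ and any $\chi\in\mathcal{C}$, the condition that $\chi$ be constant up to $\delta$ on $S$ is equivalent to $\chi$ being exactly constant on $S$. Given $\m B\in\Age(\m F)$, finite oscillation stability applied with $\varepsilon=\delta$ (and the translation between the group-theoretic and combinatorial formulations carried out in Proposition \ref{prop:aOSRamsey}) produces $b\in\binom{\m F}{\m B}$ such that each $\chi\in\mathcal{C}$ is constant up to $\delta$ on $\binom{b(\m B)}{\m A}$; by choice of $\delta$, this constancy is exact.

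For $ii)\Rightarrow i)$, the argument is essentially trivial: if every $\chi\in\mathcal{C}$ is exactly constant on $\binom{b(\m B)}{\m A}$, then it is a fortiori constant up to every $\e>0$ on that set. Invoking the reverse direction of Proposition \ref{prop:aOSRamsey} for arbitrary $\e>0$ and arbitrary finite $H\subset G$ recovers finite oscillation stability.

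There is no real obstacle here; the statement is a straightforward reformulation that leverages the fact that ``constant up to a sufficiently small $\e$'' collapses to ``constant'' as soon as the colorings have finite range. The main point of stating it separately is conceptual: it makes visible that the $2\e$ slack in Proposition \ref{cor:fpaRP} is genuinely tied to approximating arbitrary elements of $\A$ by finite colorings, and disappears the moment those approximations can be chosen exactly.
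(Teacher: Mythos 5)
Your proof is correct and matches the paper's intended argument: the paper simply states that the proposition follows ``by considering all $\e>0$ simultaneously in Proposition \ref{prop:aOSRamsey},'' and your minimum-gap choice of $\delta$ (below which ``constant up to $\delta$'' collapses to ``constant'' for finite-range colorings) is precisely the right way to make that one-line remark rigorous.
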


This yields: 

\begin{prop}
\label{cor:fpRP}
Let $\m F$ be a Fra\"iss\'e structure, and let $\A$ be a unital, left-invariant, closed C$^{*}$-subalgebra of $\RUCB(\Aut(\m F))$. Assume that finite colorings are dense in $\A$. TFAE: 
\begin{enumerate}
\item[i)] The flow $\Aut(\m F)\actson \Aut(\m F)^{\A}$ has a fixed point. 
\item[ii)] The structure $\m F$ has the Ramsey property for colorings in $\A$. 
\end{enumerate}
\end{prop}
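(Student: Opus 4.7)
\medskip

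\noindent\textbf{Proof plan.} The statement is essentially the ``clean'' version of Proposition \ref{cor:fpaRP}, where the $2\e$ error term disappears thanks to the density hypothesis. The plan is to deduce it directly by combining Proposition \ref{cor:fp} (translating fixed points into finite oscillation stability) with Proposition \ref{prop:OSRamsey} (the error-free equivalence between finite oscillation stability and the Ramsey property, valid for finite sets of finite colorings). The density of finite colorings in $\A$ is used only for the harder implication, to bridge between arbitrary elements of $\A$ and the finite colorings that Proposition \ref{prop:OSRamsey} talks about.

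For $i)\Rightarrow ii)$, I would begin with a fixed point in $\Aut(\m F)^{\A}$, so that by Proposition \ref{cor:fp}, every finite $\mathcal F\subset \A$ is finitely oscillation stable. Fix $\m A,\m B\in \Age(\m F)$ and a finite set $\mathcal C\subset \A$ of finite colorings of $\binom{\m F}{\m A}$. Since $\mathcal C$ is itself a finite subset of $\A$, it is finitely oscillation stable. Proposition \ref{prop:OSRamsey} then yields $b\in \binom{\m F}{\m B}$ on which every $\chi\in \mathcal C$ is constant on $\binom{b(\m B)}{\m A}$, which is the Ramsey property for colorings in $\A$. This direction does not require density.

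For $ii)\Rightarrow i)$, by Proposition \ref{cor:fp}, it suffices to check that every finite $\mathcal F\subset \A$ is finitely oscillation stable. Fix such an $\mathcal F$, a finite $H\subset \Aut(\m F)$, and $\e>0$. Using the density hypothesis, I would approximate each $f\in \mathcal F$ by a finite coloring $\chi_{f}\in \A$ with $\| f-\chi_{f}\|_{\infty}<\e/3$. The family $\mathcal C:=\{\chi_{f}:f\in \mathcal F\}$ is a finite set of finite colorings lying in $\A$. Applying the hypothesis $ii)$ together with Proposition \ref{prop:OSRamsey} (in its equivalent finite-oscillation-stability form given by Proposition \ref{prop:aOSRamsey} taken in the limit $\e\to 0$), I get $g\in \Aut(\m F)$ so that every $\chi_{f}$ is \emph{exactly} constant on $Hg$, hence constant up to $0$. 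A triangle inequality then shows that every $f\in \mathcal F$ is constant up to $2\e/3<\e$ on $Hg$, which is the required finite oscillation stability.

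The only subtle point is to make sure that the finite colorings extracted by density can themselves be taken in $\A$: this is exactly what ``finite colorings are dense in $\A$'' means, so no further work is needed. There is no real obstacle; the statement is a streamlined corollary of material already built.
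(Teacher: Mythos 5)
Your proposal is correct and follows essentially the same route as the paper: Proposition \ref{cor:fp} to pass to finite oscillation stability, the density hypothesis to reduce to finite sets of finite colorings (with a triangle-inequality approximation in the harder direction), and Proposition \ref{prop:OSRamsey} to convert to the Ramsey property. The paper's own proof is just a more condensed version of the same argument.
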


\begin{proof}
Thanks to Proposition \ref{cor:fp}, the flow $\Aut(\m F)\actson \Aut(\m F)^{\A}$ has a fixed point iff every finite $\mathcal F\subset \A$ is finitely oscillation stable. Because finite colorings are dense in $\A$, this holds iff every finite set $\mathcal C \subset \A$ of finite colorings is oscillation stable. This is equivalent to $\m F$ having the Ramsey property for colorings in $\A$ thanks to Proposition \ref{prop:OSRamsey}. \end{proof}


\begin{proof}[Proof of Theorem \ref{thm:fpRPcpct}]

The equivalence $i)\Leftrightarrow ii)$ follows from Proposition \ref{cor:fpaRP}. For $iii)\Leftrightarrow iv)$, consider $\mathcal B$ the unital, left-invariant, closed C$^{*}$-subalgebra of $\A$ generated by the set of all finite colorings in $\A$. By Proposition \ref{cor:fpRP}, the flow $\Aut(\m F)\actson \Aut(\m F)^{\mathcal B}$ has a fixed point iff $\m F$ has the Ramsey property for colorings in $\mathcal B$, which is equivalent to the Ramsey property for colorings in $\A$. Therefore, it suffices to show that $\Aut(\m F)\actson \Aut(\m F)^{\mathcal B}$ has a fixed point iff every zero-dimensional factor of $\Aut(\m F)\actson \Aut(\m F)^{\mathcal A}$ does. To do this, recall that a compact topological space $X$ is zero-dimensional exactly when the continuous maps taking finitely many values are uniformly dense in $C(X)$. It follows that $\Aut(\m F)^{\mathcal B}$ is zero-dimensional, which proves one implication. For the other one, let $\Aut(\m F)\actson X$ be a zero-dimensional factor of $\Aut(\m F)\actson \Aut(\m F)^{\mathcal A}$, as witnessed by the map $\pi : \Aut(\m F)^{\A}\rightarrow X$. Let $x=\pi(e_{\Aut(\m F)})$. Then $C(X)_{x}\subset \A$. Since $X$ is zero-dimensional, the continuous maps taking finitely many values are dense in $C(X)$, so finite colorings are dense in $C(X)_{x}$. Therefore, we have in fact $C(X)_{x}\subset \mathcal B$ and by duality $(\Aut(\m F)^{C(X)_{x}},x)\cong (\overline{G\cdot x}, x)$ is a factor of $\Aut(\m F)\actson \Aut(\m F)^{\mathcal B}$. Since this latter flow has a fixed point, so does the former one. \end{proof}

The following result, which can be thought as a combinatorial counterpart to Proposition \ref{cor:trivialmin}, is an easy corollary: 

\begin{cor}
\label{cor:fpRP'}
Let $\m F$ be a Fra\"iss\'e structure, and let $\A$ be a unital, left-invariant, closed C$^{*}$-subalgebra of $\RUCB(\Aut(\m F))$. TFAE: 
\begin{enumerate}
\item[i)] Every minimal subflow of the flow $\Aut(\m F)\actson \Aut(\m F)^{\A}$ is trivial. 
\item[ii)] For every $x\in \Aut(\m F)^{\A}$, $\e>0$, the structure $\m F$ has the Ramsey property up to $2\e$ for colorings in $(\A_{x})_{\e}$. 
\end{enumerate}

Those imply the following equivalent statements: 
\begin{enumerate}
\item[iii)] Every minimal zero-dimensional subflow of $\Aut(\m F)\actson \Aut(\m F)^{\A}$ is trivial. 
\item[iv)] For every $x\in \Aut(\m F)^{\A}$, the structure $\m F$ has the Ramsey property for colorings in $\A_{x}$. 
\end{enumerate}

When finite colorings are dense in $\A$, all those statements are equivalent.  
\end{cor}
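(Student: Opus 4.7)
The plan is to derive both equivalences by applying Proposition \ref{cor:trivialmin} and Theorem \ref{thm:fpRPcpct} pointwise to the subalgebras $\A_{x}\subset\RUCB(\Aut(\m F))$ for each $x\in\Aut(\m F)^{\A}$. Each $\A_{x}$ is itself a unital, left-invariant, closed C$^{*}$-subalgebra of $\RUCB(\Aut(\m F))$, whose Gelfand space is the pointed orbit closure $(\overline{\Aut(\m F)\cdot x},x)$, so all the results of Section \ref{section:RPfp} are available for $\A_{x}$ as well as for $\A$.

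For $i)\Leftrightarrow ii)$ I would first rewrite i) via Proposition \ref{cor:trivialmin} as: for every $x$ and every finite $\mathcal F\subset\A$, the family $\mathcal F_{x}$ is finitely oscillation stable. Since the finite subsets of $\A_{x}$ are exactly the sets of the form $\mathcal F_{x}$ with $\mathcal F\subset\A$ finite, this is the same as saying that for every $x$, every finite subset of $\A_{x}$ is finitely oscillation stable. Proposition \ref{prop:OSRP}, applied to each $\A_{x}$, then converts this uniformly into the Ramsey statement ii).

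For $iii)\Leftrightarrow iv)$ I would begin by applying the equivalence $iii)\Leftrightarrow iv)$ of Theorem \ref{thm:fpRPcpct} to $\A_{x}$: iv) holds for a given $x$ iff every zero-dimensional factor of $\Aut(\m F)\actson\overline{\Aut(\m F)\cdot x}$ has a fixed point. Thus iv) is equivalent to the pointwise condition $(\star)$: for every $x$, every zero-dimensional factor of $\overline{\Aut(\m F)\cdot x}$ has a fixed point. The implication $(\star)\Rightarrow iii)$ is immediate: any minimal zero-dimensional subflow $M\subset\Aut(\m F)^{\A}$ equals $\overline{\Aut(\m F)\cdot x}$ for any $x\in M$ by minimality, so $M$ is a zero-dimensional factor of itself, $(\star)$ produces a fixed point in $M$, and minimality forces $M$ to be trivial. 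For the converse, given a zero-dimensional factor $Y$ of $\overline{\Aut(\m F)\cdot x}$ via $\pi$, I would pick a minimal subflow $Y_{0}\subset Y$, which is automatically zero-dimensional, lift some $y_{0}\in Y_{0}$ to $x_{0}\in\overline{\Aut(\m F)\cdot x}$ with $\pi(x_{0})=y_{0}$, and choose a minimal subflow $M\subset\overline{\Aut(\m F)\cdot x_{0}}\subset\Aut(\m F)^{\A}$; by minimality of $Y_{0}$ this $M$ projects onto $Y_{0}$, and triviality of $M$ under iii) would yield triviality of $Y_{0}$, hence a fixed point in $Y$.

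The main obstacle is precisely the last step: the lift $M$ is minimal in $\Aut(\m F)^{\A}$ but need not a priori be zero-dimensional, so iii) does not directly apply. The delicate point of the argument is either to choose the lift more carefully, or to exploit the fact that all minimal subflows of the ambit $\Aut(\m F)^{\A}$ are pairwise isomorphic, in order to replace $M$ by a zero-dimensional minimal model. The density addendum is then a clean consequence: when finite colorings are dense in $\A$, the Gelfand space $\Aut(\m F)^{\A}$ is zero-dimensional, hence so is every closed subflow, conditions i) and iii) coincide, and all four statements become equivalent.
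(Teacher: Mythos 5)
Your treatment of $i)\Leftrightarrow ii)$ is correct and is surely the intended route: Proposition \ref{cor:trivialmin} turns $i)$ into finite oscillation stability of every $\mathcal F_{x}$, the finite subsets of $\A_{x}$ are exactly the sets $\mathcal F_{x}$ with $\mathcal F\subset\A$ finite (the restriction map $C(\Aut(\m F)^{\A})\to C(\overline{\Aut(\m F)\cdot x})$ is onto, so $\A_{x}=C(\overline{\Aut(\m F)\cdot x})_{x}$ is indeed a unital left-invariant closed subalgebra), and Proposition \ref{prop:OSRP} applied to each $\A_{x}$ gives $ii)$. Your direction $iv)\Rightarrow iii)$ via $(\star)$ is also fine, and the density addendum goes through without ever needing the problematic implication: $ii)\Rightarrow iv)$ holds directly (a finite coloring in $\A_{x}$ lies in $(\A_{x})_{\e}$ for every $\e$ and has separated finite range, so constancy up to $2\e$ for all $\e$ is constancy), and $iv)\Rightarrow iii)\Rightarrow i)$ closes the loop once $\Aut(\m F)^{\A}$ is zero-dimensional. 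This covers the only use the paper makes of the corollary (Theorem \ref{thm:minRoeckeDefRP} invokes it for the Roelcke algebra, where finite colorings are dense). The paper itself gives no proof, asserting the statement as ``an easy corollary'', so there is no written argument to compare against.

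The gap you flag in $iii)\Rightarrow iv)$ is genuine, and neither of your proposed repairs works as stated. The minimal subflow $M\subset\Aut(\m F)^{\A}$ lifted above a minimal zero-dimensional subflow $Y_{0}$ of the factor $Y$ need not be zero-dimensional (a minimal flow can fail to be zero-dimensional while admitting a nontrivial zero-dimensional factor), so $iii)$ says nothing about it. As for replacing $M$ by a zero-dimensional model: the pairwise isomorphism of all minimal subflows of $\Aut(\m F)^{\A}$ is a property of the Samuel compactification, available for general $\A$ only when $\Aut(\m F)^{\A}$ carries a right-topological semigroup structure, i.e.\ when $\A$ is point-universal ($\A_{x}\subset\A$ for all $x$); the paper explicitly notes this fails, e.g.\ for the Roelcke algebra, which is precisely where the corollary is applied. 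Even granting it, if no minimal subflow is zero-dimensional then $iii)$ is vacuous while $iv)$ still asserts something nontrivial. So as it stands you have proved $i)\Leftrightarrow ii)$, $iv)\Rightarrow iii)$, and the full equivalence under density, but not $iii)\Rightarrow iv)$ in general; to repair this you should either replace $iii)$ by your condition $(\star)$ (every zero-dimensional factor of every orbit closure has a fixed point), which is exactly what Theorem \ref{thm:fpRPcpct} applied to each $\A_{x}$ delivers and makes the equivalence with $iv)$ immediate, or supply an argument showing that a nontrivial minimal zero-dimensional factor of a minimal subflow of $\Aut(\m F)^{\A}$ forces a nontrivial minimal zero-dimensional subflow of $\Aut(\m F)^{\A}$.
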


%
%
%
%
%
%
%

\subsection{Ramsey properties and fixed points in classes of flows}

\label{section:ambits}

In this section, we prove Theorem \ref{thm:fpRP}. We have just seen how Ramsey-theoretical statements reflect the existence of fixed points in certain compactifications. In practice, however, one is often interested in the existence of fixed points in a given class $\mathcal X$ of flows defined by a dynamical property (like being distal, equicontinuous, proximal,...), as opposed to the existence of a fixed point in a particular compactification. The purpose of what follows is to show that in that setting, the Ramsey-theoretical approach remains relevant at the cost of rather mild hypotheses on $\mathcal X$. The reader familiar with topological dynamics and Gelfand compactifications may go directly to the proof Theorem \ref{thm:fpRP}, at the end of this section. For the other ones, a synthetic treatment based on \cite[Chapter IV, Sections 4 and 5]{dV} is presented below. This material is classical and is only included here for the sake of completeness.

In what follows, it will be convenient to work with $\mathcal X$-$G$-ambits, i.e. $G$-ambits $G\actson(X,x)$ so that $G\actson X \in \mathcal X$. Recall first that for a family $(X_{\alpha}, x_{\alpha})_{\alpha}$ of $G$-ambits, its \emph{supremum} $\bigvee_{\alpha}(X_{\alpha},x_{\alpha})$ is the $G$-ambit induced on the orbit closure of $(x_{\alpha})_{\alpha}$ in the product $\prod_{\alpha}X_{\alpha}$, together with the distinguished point $(x_{\alpha})_{\alpha}$. Next, consider the algebra $\RUCB(G)$. We have already seen that $G$ acts continuously on it by left-shift via $g\cdot f (h)=f(g^{-1}h)$. It also acts by right shift via $g\bullet f(h):=f(hg)$. It turns out that when $\RUCB(G)$ is equipped with the \emph{pointwise convergence} topology, this action is continuous\footnote{Caution: Continuity may not hold on $\RUCB(G)$ itself. I am grateful to the referee for having pointed it out.} on the orbit (pointwise) closure $\overline{G\bullet f}$ of every $f\in \RUCB(G)$. This set is then a compact invariant subset of $\RUCB(G)$, to which one can attach the $G$-ambit $(\overline{G\bullet f}, f)$. The reason for which this ambit is relevant here comes from the following fact: 

\begin{prop}
\label{prop:Gelfand}
Let $G$ be a topological group, $f\in \RUCB(G)$. Let $\langle f \rangle$ denote the unital left-invariant, closed C$^{*}$-subalgebra of $\RUCB(G)$ generated by $f$. Then the ambits $(G^{\langle f \rangle},e_{G})$ and $(\overline{G\bullet f},f)$ are isomorphic. 
\end{prop}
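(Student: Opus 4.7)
The plan is to exhibit an explicit $G$-equivariant homeomorphism $\Phi : G^{\langle f \rangle} \to \overline{G \bullet f}$ sending the character $\mathrm{ev}_{e_G}$ to $f$. Left-invariance of $\langle f \rangle$ ensures that every left translate $h^{-1}\cdot f$ (where $(g\cdot \phi)(k) = \phi(g^{-1}k)$) lies in $\langle f \rangle$, so I may define $\Phi(\chi)(h) := \chi(h^{-1}\cdot f)$ for $\chi \in G^{\langle f \rangle}$ and $h \in G$. For each fixed $h$, the map $\chi \mapsto \chi(h^{-1}\cdot f)$ is continuous in the Gelfand (weak-$*$) topology, so $\Phi$ is continuous from $G^{\langle f \rangle}$ into $\C^G$ equipped with pointwise convergence.

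The main computation is carried out on the dense subset $G \subset G^{\langle f \rangle}$: identifying $g$ with $\mathrm{ev}_g$, one has $\Phi(\mathrm{ev}_g)(h) = (h^{-1}\cdot f)(g) = f(hg) = (g \bullet f)(h)$, so $\Phi$ restricts on $G$ to the orbit map $g \mapsto g \bullet f$, and in particular $\Phi(\mathrm{ev}_{e_G}) = f$. By compactness of $G^{\langle f \rangle}$ and continuity of $\Phi$, the image is a closed subset of $\C^G$ containing $G \bullet f$, hence equals $\overline{G \bullet f}$, which settles simultaneously that the codomain is correct and that $\Phi$ is surjective. Equivariance $\Phi(g \cdot \chi) = g \bullet \Phi(\chi)$ follows from the one-line calculation $(g \cdot \chi)(h^{-1}\cdot f) = \chi(g^{-1}\cdot h^{-1}\cdot f) = \chi((hg)^{-1}\cdot f) = \Phi(\chi)(hg)$.

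For injectivity, note that $\Phi(\chi)$ records by construction the values of $\chi$ on every left translate of $f$; since $\langle f \rangle$ is by definition the unital closed $*$-subalgebra of $\RUCB(G)$ generated by the left $G$-orbit of $f$, any character of $\langle f \rangle$ is determined by these values, so $\Phi$ is injective. A continuous bijection between compact Hausdorff spaces is a homeomorphism, yielding the desired ambit-isomorphism. The only genuine difficulty is bookkeeping with the three actions involved (the left action on $\RUCB(G)$, the right-bullet action on orbit closures, and the dual left action on characters) and remembering that the orbit closure is taken in the pointwise topology, so that the action there is continuous; no deeper obstacle is expected, consistent with the paper's remark that this material is included purely for completeness.
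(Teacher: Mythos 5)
Your map $\Phi(\chi)(h)=\chi(h^{-1}\cdot f)$ is exactly the map $\pi$ used in the paper, and the key computations (restriction to $G$ giving $g\mapsto g\bullet f$, equivariance, injectivity via the generating orbit, continuity for pointwise convergence) are the same; the proof is correct and follows essentially the same route. The only cosmetic difference is that you argue surjectivity via compactness of the image in $\C^{G}$, while the paper simply identifies the image as $\overline{G\bullet \pi(e_{G})}$.
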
  

To prove this proposition, we start by making more explicit the construction of Gelfand compactifications. Let $\A$ be a unital left-invariant, closed C$^{*}$-subalgebra of $\RUCB(G)$. The Gelfand space $G^{\A}$ is, by definition, the space of C$^{*}$-algebra homomorphisms $\phi:\A\rightarrow \C$. It is compact when equipped with its weak$^{*}$-topology. Every $g\in G$ defines an evaluation fonctional $\hat g:\alpha\mapsto\alpha(g)$, and this defines a compactification of $G$, on which the left-regular action of $G$ on itself extends naturally to an action on $G^{\A}$ by left-shift $g\cdot \phi(\alpha)=\phi(g^{-1}\cdot \alpha)$. Here are the crucial features of $G^{\A}$ that we will use: 
\begin{enumerate}
\item $\mathrm C(G^{\A})$ can be identified with $\A$. This is realized by the isomorphism of C$^{*}$-algebras $\mathrm C(G^{\A})\to\A$ defined by  $f\mapsto f_{e_{G}}$, and whose inverse sends $\alpha\in \A$ to the continuous function $\hat \alpha$ defined on $G^{\A}$ by $\hat \alpha : \phi\mapsto \phi(\alpha)$. 
\item Duality: If $\A, \mathcal B$ are two unital left-invariant, closed C$^{*}$-subalgebra of $\RUCB(G)$, then $\A \subset \mathcal B$ holds iff $(G^{\A}, e_{G})$ is a factor of $(G^{\mathcal B}, e_{G})$. 
\item Let $G\actson(X,x)$ be a $G$-ambit. Then the unital left-invariant, closed C$^{*}$-subalgebra $C(X)_{x}$ of $\RUCB(G)$ defined by $C(X)_{x} = \{ f_{x}:f\in C(X)\}$ (recall that $f_{x}(g)=f(g\cdot x)$) is such that $(G^{C(X)_{x}},e_{G})$ is isomorphic to $(X,x)$. 
\end{enumerate}

With all this in mind, let us now turn to the proof of Proposition \ref{prop:Gelfand}. 

\begin{proof}[Proof of Proposition \ref{prop:Gelfand}]
As we have seen in Section \ref{section:oscillationstability}, $f$ can be thought as the continuous function $\hat f$ on $G^{\langle f \rangle}$ defined by $\hat f:\phi\mapsto \phi(f)$. It follows that for every $\phi\in G^{\langle f \rangle}$ the map $\pi(\phi):h\mapsto \hat f (h\cdot \phi) \ (=h\cdot \phi(f) = \phi(h^{-1}\cdot f))$ is in $\RUCB(G)$. This defines $$\pi:G^{\langle f \rangle}\rightarrow \RUCB(G)$$

Note that for $g,h\in G$, $\pi(\hat g)(h)=\hat g(h^{-1}\cdot f)=(h^{-1}\cdot f)(g)=f(hg)=g\bullet f(h)$. Therefore, $\pi(\hat g)=g\bullet f$ and in particular $\pi(e_{G})=f$. Let us now verify that $\pi$ is an injective homomorphism of $G$-flows. This will suffice to prove the desired result, since $\pi$ will then be a $G$-flow isomorphism between $G^{\langle f \rangle}$ and its image in $ \RUCB(G)$, which is $\overline{G\bullet \pi(e_{G})}=\overline{G\bullet f}$. 

For injectivity, assume that $\pi(\phi_{1})=\pi(\phi_{2})$. From the expression of $\pi(\phi)(h)$ above, this implies that $\phi_{1}$ and $\phi_{2}$ agree on the orbit $G\cdot f$, and therefore on all of $\langle f \rangle$.  To prove that $\pi$ is $G$-equivariant, consider $g,h\in G$ and $\phi\in G^{\langle f \rangle}$. Then: $$ \pi(g\cdot \phi)(h)=(g\cdot \phi)(h^{-1}\cdot f)=\phi(g^{-1}\cdot(h^{-1}\cdot f))=\phi((hg)^{-1}\cdot f)=\pi(\phi)(hg)$$ 

The last term of the equality is $(g\bullet\pi(\phi))(h)$, so $\pi(g\cdot \phi)=g\bullet\pi(\phi)$. To prove that $\pi$ is continuous, fix $H\subset G$ finite, $\e>0$. If $\phi_{1}, \phi_{2}\in G^{\langle f \rangle}$ agree on the finite set $H^{-1}\cdot f$, then $|\phi_{1}(h^{-1}\cdot f)-\phi_{2}(h^{-1}\cdot f)|<\e$ for every $h\in H$. This means that for every $h\in H$, $|\pi(\phi_{1})(h)-\pi(\phi_{2})(h)|<\e$, as required. \end{proof}

Before going on, a small remark: We now have two actions of $G$ on $\RUCB(G)$. When $G=\Aut(\m F)$ for a Fra\"iss\'e structure $\m F$, we have seen the set of finite colorings as a subset of $\RUCB(\Aut(\m F))$, consisting of those functions $\chi$ such that $\chi(h)=\bar{\chi}({\restrict{h^{-1}}{A}})$ for some finite $\m A$. Thus, $$g\bullet \chi (h) = \chi(hg)=\bar\chi(\restrict{g^{-1}h^{-1}}{A})=g\cdot\bar\chi(\restrict{h^{-1}}{A})$$ 

In other words, the action by right shift on $\RUCB(\Aut(\m F))$ induces the action by left shift on the space of finite colorings. In counterpart, the action by left shift on $\RUCB(\Aut(\m F))$ does not seem to transfer naturally to the space of colorings.   

\begin{prop}
\label{prop:KA}
Let $G$ be a topological group and $\mathcal X$ be a class of $G$-flows such that the class of $\mathcal X$-$G$-ambits is closed under suprema and factors. Then the set $\A=\{ f\in \RUCB(G) : G\actson \overline{G\bullet f} \in \mathcal X\}$ forms a unital left-invariant, closed C$^{*}$-subalgebra of $\RUCB(G)$, and the factors of $G\actson(G^{\A}, e_{G})$ are exactly the $\mathcal X$-$G$-ambits. 
\end{prop}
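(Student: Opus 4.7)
The plan is to translate the problem entirely into the language of unital, left-invariant, closed C$^{*}$-subalgebras of $\RUCB(G)$, using Proposition~\ref{prop:Gelfand} together with the duality principle recalled in item~(2) above. I would let $\mathcal X'$ denote the collection of unital, left-invariant, closed C$^{*}$-subalgebras $\mathcal B$ of $\RUCB(G)$ for which $(G^{\mathcal B}, e_G)$ is an $\mathcal X$-$G$-ambit. By Proposition~\ref{prop:Gelfand}, the ambit $(\overline{G\bullet f}, f)$ is isomorphic to $(G^{\langle f\rangle}, e_G)$, so $f\in \A$ iff $\langle f\rangle \in \mathcal X'$. Moreover, if $f$ lies in some $\mathcal B \in \mathcal X'$, then $\langle f\rangle \subset \mathcal B$, and by duality $(G^{\langle f\rangle}, e_G)$ is a factor of the $\mathcal X$-$G$-ambit $(G^{\mathcal B}, e_G)$, hence is itself an $\mathcal X$-$G$-ambit by closure under factors; so $f\in\A$. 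Combining these two remarks gives the identification $\A = \bigcup_{\mathcal B \in \mathcal X'} \mathcal B$.

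The next step is to translate the hypotheses on $\mathcal X$ into closure properties of $\mathcal X'$. Since duality sends inclusions of algebras to factor maps of ambits, closure of $\mathcal X$-$G$-ambits under factors yields that $\mathcal X'$ is closed under taking unital, left-invariant, closed C$^{*}$-subalgebras. Similarly, duality identifies the supremum $\bigvee_i \mathcal B_i$ in the lattice of such subalgebras with the supremum $\bigvee_i (G^{\mathcal B_i}, e_G)$ of the corresponding ambits, so closure of $\mathcal X$-$G$-ambits under suprema yields closure of $\mathcal X'$ under lattice suprema. Applying this to the whole family $\mathcal X'$, one obtains that $\A' := \bigvee_{\mathcal B \in \mathcal X'} \mathcal B$ itself lies in $\mathcal X'$, and in particular is a unital, left-invariant, closed C$^{*}$-subalgebra of $\RUCB(G)$. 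The inclusion $\A \subset \A'$ is immediate, and the reverse follows from downward closure: for every $f \in \A'$, one has $\langle f\rangle \subset \A' \in \mathcal X'$, hence $\langle f\rangle \in \mathcal X'$, i.e.\ $f \in \A$. Thus $\A = \A'$, which settles the algebraic part of the statement.

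The characterization of factors then follows formally. By duality, factors of $(G^{\A}, e_G)$ correspond to unital, left-invariant, closed C$^{*}$-subalgebras $\mathcal B \subset \A$, and every such $\mathcal B$ lies in $\mathcal X'$ by downward closure, so the corresponding ambit $(G^{\mathcal B}, e_G)$ is an $\mathcal X$-$G$-ambit. Conversely, any $\mathcal X$-$G$-ambit $(X, x)$ is isomorphic to $(G^{C(X)_{x}}, e_{G})$ by property~(3), and $C(X)_{x}$ lies in $\mathcal X'$ by definition; hence $C(X)_{x} \subset \A$ by maximality of $\A$ in $\mathcal X'$, which exhibits $(X, x)$ as a factor of $(G^{\A}, e_G)$.

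The principal obstacle I anticipate is essentially bookkeeping: one must make rigorous the duality dictionary used throughout, in particular the identification of the lattice-theoretic supremum of algebras with the supremum of the corresponding ambits (which rests on a Stone--Weierstrass argument applied to the orbit closure of the distinguished point in the product ambit). Once this dictionary is in place, each hypothesis on $\mathcal X$ translates mechanically into a closure property of $\mathcal X'$, and the proof proceeds by direct formal manipulation.
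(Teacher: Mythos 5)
Your argument is correct and is essentially the paper's own proof read through the duality dictionary: the paper forms the supremum ambit $(X,x)=\bigvee_{f\in\A}(\overline{G\bullet f},f)$, notes it is an $\mathcal X$-$G$-ambit by closure under suprema, and shows $C(X)_{x}=\A$ using closure under factors for one inclusion and the definition of the supremum for the other --- which is precisely your computation of $\A'=\bigvee_{\mathcal B\in\mathcal X'}\mathcal B$ carried out on the algebra side. The one step you rightly flag as needing care, the identification of the lattice supremum of algebras with the supremum of the corresponding ambits, is the same fact the paper uses implicitly when it passes between $(X,x)$ and $C(X)_{x}$, so there is no substantive difference between the two routes.
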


\begin{proof}
Let $(X,x)=\bigvee_{f\in \A}(\overline{G\bullet f},f)$. As a supremum of $\mathcal X$-$G$-ambits, it is a $\mathcal X$-$G$-ambit as well. Let $C(X)_{x} = \{ f_{x}:f\in C(X)\}$. As we have seen, this is a unital left-invariant, closed C$^{*}$-subalgebra of $\RUCB(G)$. To prove the result, it suffices to show that it is equal to $\A$. Let $f\in \RUCB(G)$. Then $f\in C(X)_{x}$ iff $\langle f\rangle \subset C(X)_{x}$. Passing to Gelfand compactifications, this means that $(G^{\langle f \rangle},e_{G})$ is a factor of $(G^{C(X)_{x}},e_{G})$, or, equivalently, that 
$(\overline{G\bullet f},f)$ is a factor of $(X,x)$ (Proposition \ref{prop:Gelfand}). Now, this happens iff $f\in \A$: the direct implication holds because the class of $\mathcal X$-$G$-ambits is closed under factors, and the converse holds thanks to the definition of $(X,x)$, as $(\overline{G\bullet f},f)$ appears as one of its factors. \end{proof}

\begin{proof}[Proof of Theorem \ref{thm:fpRP}]
In view of the previous proposition, it follows at once that $G\actson G^{\A}$ (resp. every zero-dimensional factor of $G\actson G^{\A}$) has a fixed point iff every $\mathcal X$-$G$-ambit (resp. zero-dimensional $\mathcal X$-$G$-ambit) has a fixed point. When $\mathcal X$ satisfies the additional property that every $G\actson X\in \mathcal X$ admits some $x\in X$ such that $G\actson \overline{G\cdot x}\in \mathcal X$, those statements are equivalent to the fact that every $G$-flow (resp. zero-dimensional $G$-flow) in $\mathcal X$ has a fixed point. Theorem \ref{thm:fpRP} now follows from Theorem \ref{thm:fpRPcpct}.   
\end{proof}

\section{Roelcke flows and definable colorings}

\label{section:Roelcke}

The purpose of this section is to prove Theorem \ref{thm:fpRoecke} and Theorem \ref{thm:minRoeckeDefRP} thanks to the machinery that we just developed. This is done in Section \ref{subsection:Roelcke} and Section \ref{subsection:defRP}, respectively. We finish in Section \ref{subsection:Remarks} with several remarks.   


\subsection{Fixed points in the Roelcke compactification, Roelcke colorings and joint embedding patterns}

\label{subsection:Roelcke}

\begin{defn}
Let $f : G \rightarrow \C$. It is \emph{Roelcke} when it is uniformly continuous relative to the Roelcke uniformity on $G$.
\end{defn}

Equivalently, $f$ is Roelcke when it is both right and left uniformly continuous on $G$. In what follows, we will be particularly interested in \emph{Roelcke-precompact} groups, i.e. groups with compact completion relative to the Roelcke uniformity. In that case, every Roelcke function on $G$ is bounded, and the set $\ROB(G)$ of all Roelcke, bounded, functions is a unital, left-invariant, closed C$^{*}$-subalgebra of $\RUCB(G)$. The corresponding compactification $G^{\ROB(G)}$ will be denoted by $R(G)$. After their introduction in by Roelcke and Dierolf \cite{RD}, Roelcke-precompact groups have shown their utility through the work of Uspenskij \cites{Us3, Us4, Us5}. More recently, several essential contributions by Tsankov \cite{Ts}, Ben-Yaacov-Tsankov \cite{BYT} and Ibarluc\'ia \cite{I0, I1} have shown that their r\^{o}le is central when studying automorphism groups of Fra\"iss\'e structures from the model-theoretic point of view. As a matter of fact, Roelcke-precompact groups of the form $\Aut(\m F)$ for $\m F$ Fra\"iss\'e can be easily characterized combinatorially. Indeed, we have seen in Section \ref{section:oscillationstability} that a typical entourage of the Roelcke uniformity on $\Aut(\m F)$ is indexed by two finite substructures $\m A$ and $\m Z$ of $\m F$, and that two elements $g,h\in \Aut(\m F)$ are $(\m A, \m Z)$-close when $$\Stab(\m A)g\Stab(\m Z)=\Stab(\m A)h\Stab(\m Z)$$ 

If we denote by $z$ the identity embedding $\m Z\to \m F$, this means: $$\langle \restrict{g^{-1}}{A}, z \rangle\cong \langle \restrict{h^{-1}}{A}, z \rangle$$ 

For the sequel, it will be useful to remember that for a joint embedding $\langle a,z\rangle$, $[a,z]$ refers to its pattern, i.e. its isomorphism type.   

\begin{prop}
\label{prop:Roelcke-precompact}
Let $\m F$ be a Fra\"iss\'e structure. Then $\Aut(\m F)$ is Roelcke-precompact iff for every $\m A, \m Z \in \Age(\m F)$, there are only finitely many joint embedding patterns of $\m A$ and $\m Z$. 
\end{prop}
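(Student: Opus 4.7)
The plan is to reduce Roelcke-precompactness to a counting statement about double cosets $\Stab(\m A)\backslash \Aut(\m F)/\Stab(\m Z)$, and then to set up an explicit bijection between such double cosets and joint embedding patterns of $\m A$ and $\m Z$ (taken in $\Age(\m F)$, i.e.\ with amalgam in $\Age(\m F)$). The preparatory work was essentially done in Section \ref{section:oscillationstability}: the entourages indexed by pairs of finite substructures $(\m A,\m Z)$ form a basis of the Roelcke uniformity, so $\Aut(\m F)$ is Roelcke-precompact iff for every such pair, $\Aut(\m F)$ is covered by finitely many $(\m A,\m Z)$-balls. Since the $(\m A,\m Z)$-ball through $g$ is exactly $\Stab(\m A)g\Stab(\m Z)$, the task is to prove that $|\Stab(\m A)\backslash \Aut(\m F)/\Stab(\m Z)|<\infty$ iff there are only finitely many joint embedding patterns of $\m A$ and $\m Z$.

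First I would define the map $\Phi: \Aut(\m F)\to \{\text{patterns of } \m A,\m Z\}$ by $\Phi(g)=[\restrict{g^{-1}}{A},\mathrm{id}_Z]$, where $\mathrm{id}_Z$ denotes the identity embedding $\m Z\hookrightarrow \m F$. This is manifestly constant on each double coset $\Stab(\m A)g\Stab(\m Z)$: pre-composing $g$ by an element of $\Stab(\m Z)$ does not change $\restrict{g^{-1}}{A}$ up to post-composition with an element fixing $Z$ pointwise, which is absorbed by the isomorphism defining the pattern; post-composing $g$ by an element of $\Stab(\m A)$ leaves $\restrict{g^{-1}}{A}$ unchanged on $A$, again up to an isomorphism of the amalgam fixing $Z$.

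The key step is to show $\Phi$ descends to a bijection with the set of patterns realized in $\m F$. For injectivity, suppose $\Phi(g)=\Phi(h)$: there is an isomorphism $c$ between the amalgams $g^{-1}(A)\cup Z$ and $h^{-1}(A)\cup Z$ with $c\circ \restrict{g^{-1}}{A}=\restrict{h^{-1}}{A}$ and $c|_Z=\mathrm{id}_Z$. By ultrahomogeneity of $\m F$, $c$ extends to some $\tilde c\in \Aut(\m F)$; then $\tilde c\in \Stab(\m Z)$, and the first equation gives $h\tilde c g^{-1}\in \Stab(\m A)$, so $h\in \Stab(\m A)\,g\,\Stab(\m Z)$. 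For surjectivity onto patterns realized in $\m F$: if $\langle a,z\rangle$ is a joint embedding of $\m A,\m Z$ into $\m F$, ultrahomogeneity provides $\sigma \in \Aut(\m F)$ with $\sigma|_{z(Z)}=z^{-1}\circ \mathrm{id}$, so that $\langle \sigma\circ a,\mathrm{id}_Z\rangle$ is isomorphic to $\langle a,z\rangle$; ultrahomogeneity again yields $g\in \Aut(\m F)$ with $\restrict{g^{-1}}{A}=\sigma\circ a$. Finally, every abstract joint embedding pattern of $\m A,\m Z$ with amalgam $\m C\in \Age(\m F)$ is realized in $\m F$ since $\m C$ embeds in $\m F$.

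Combining these steps, the double quotient $\Stab(\m A)\backslash \Aut(\m F)/\Stab(\m Z)$ is in bijection with the set of joint embedding patterns of $\m A$ and $\m Z$, which yields the proposition. I expect the main obstacle to be only notational care, namely keeping track of the left/right distinction between $g$ and $g^{-1}$ in the identification of right cosets with embeddings, and the two successive invocations of ultrahomogeneity (once to rectify $z$ to $\mathrm{id}_Z$, once to realize an arbitrary embedding $a$ as $\restrict{g^{-1}}{A}$).
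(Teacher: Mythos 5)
Your proposal is correct and follows essentially the same route as the paper: Roelcke-precompactness is reduced to finiteness of the double quotient $\Stab(\m A)\backslash \Aut(\m F)/\Stab(\m Z)$ for each pair $(\m A,\m Z)$, which is then identified with the set of joint embedding patterns via $g\mapsto [\restrict{g^{-1}}{A},z]$. The paper states this identification by appealing to the discussion of the Roelcke uniformity at the start of Section \ref{subsection:Roelcke}, whereas you verify the well-definedness, injectivity and surjectivity of the correspondence explicitly; these verifications are accurate.
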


\begin{proof}
$\Aut(\m F)$ is Roelcke-precompact iff for every entourage $U$ there are $g_{1},...,g_{n}\in \Aut(\m F)$ so that every $g\in \Aut(\m F)$ is $U$-close to some $g_{i}$. From the discussion above, this means that for any two finite substructures $\m A, \m Z$ of $\m F$, $\Aut(\m F)$ can be covered by finitely many $\Stab(\m A)\backslash \Aut(\m F)/\Stab(\m Z)$-classes, which holds exactly when there are only finitely many joint embedding patterns of $\m A$ and $\m Z$. 
\end{proof}

\begin{prop}
Let $\m F$ be a Fra\"iss\'e structure. Then finite colorings are dense in $\ROB(\Aut(\m F))$. \end{prop}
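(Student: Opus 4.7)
The plan is to exploit the fact that Roelcke uniform continuity forces a function to be almost constant on each double coset $\Stab(\m A) \backslash \Aut(\m F) / \Stab(\m Z)$, and then to quantize its values so as to obtain a finite-range approximant that is still constant on right cosets of some point stabilizer.

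Fix $f \in \ROB(\Aut(\m F))$ and $\e > 0$. First, using that $f$ is uniformly continuous with respect to the Roelcke uniformity, and recalling from the description preceding Proposition \ref{prop:Roelcke-precompact} that a basic entourage of this uniformity is given by a pair of finite substructures $(\m A, \m Z)$ of $\m F$, choose such $\m A, \m Z$ with the property that $\Stab(\m A) g \Stab(\m Z) = \Stab(\m A) h \Stab(\m Z)$ implies $|f(g) - f(h)| < \e$. Second, since $f$ is bounded, its range lies in some compact disk of $\C$, which can be covered by finitely many open balls of radius $\e$ with centers $c_{1}, \ldots, c_{k}\in \C$. For each double coset $C = \Stab(\m A) g \Stab(\m Z)$, fix a representative $g_{C} \in C$ and set $\chi(g) := c_{i(C)}$ for every $g \in C$, where $c_{i(C)}$ is any center with $|f(g_{C}) - c_{i(C)}| < \e$. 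Then for every $g \in \Aut(\m F)$, letting $C$ denote the double coset of $g$,
\[ |f(g) - \chi(g)| \leq |f(g) - f(g_{C})| + |f(g_{C}) - c_{i(C)}| < 2\e, \]
so $\|f - \chi\|_{\infty} \leq 2\e$.

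It remains to check that $\chi$ qualifies as a finite coloring in the sense of Section \ref{section:colorings}. By construction $\chi$ takes at most $k$ values, and being constant on each $(\m A, \m Z)$-double coset, it is a fortiori constant on every right coset $\Stab(\m A) g$. Hence it descends to a map $\bar\chi : \binom{\m F}{\m A} \to \{c_{1}, \ldots, c_{k}\}$ via the identification $\Stab(\m A) \backslash \Aut(\m F) \cong \binom{\m F}{\m A}$, which is exactly the data of a finite coloring on $\binom{\m F}{\m A}$. Since $\e$ was arbitrary, the set of finite colorings is dense in $\ROB(\Aut(\m F))$.

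I do not anticipate a serious obstacle: the argument is essentially a uniform-continuity-plus-quantization argument. The only point worth flagging is that Roelcke-precompactness of $\Aut(\m F)$ is \emph{not} used (there may well be infinitely many double cosets); the finiteness of the range of $\chi$ comes from the boundedness of $f$ rather than from the finiteness of the double-coset space. This is important because the subsequent applications of this density statement should hold for arbitrary Fra\"iss\'e $\m F$ whenever the hypothesis on the algebra does not already include Roelcke-precompactness.
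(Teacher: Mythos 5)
Your proof is correct and follows essentially the same route as the paper's: use Roelcke uniform continuity to find $\m A,\m Z$ so that $f$ varies by less than $\e$ on each double coset $\Stab(\m A)\backslash\Aut(\m F)/\Stab(\m Z)$, quantize via a finite $\e$-net of the (bounded) range using one representative per double coset, and conclude with the same triangle-inequality bound of $2\e$. Your closing remark is also accurate: the paper's proof likewise uses only boundedness of $f$, not Roelcke-precompactness, to get finiteness of the range of the approximant.
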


\begin{proof}
Let $f\in \ROB(\Aut(\m F))$ and fix $\e>0$. Since $f$ is bounded, there is a finite set $Y$ so that the range of $f$ is contained in $(Y)_\e$. By uniform continuity of $f$, there are two finite substructures $\m A, \m Z$ of $\m F$ so that $f$ is constant up to $\e$ on every $\Stab(\m A)\backslash \Aut(\m F)/\Stab(\m Z)$-class. For any such class $P$, choose $ h_P\in P, y_P \in Y$  such that $|y_P - f(h_P)|<\e$. For $g\in G$, set $\bar{\bar g}:= \Stab(\m A)g\Stab(\m Z)$, the equivalence class of $g$ in $\Stab(\m A)\backslash \Aut(\m F)/\Stab(\m Z)$. Then, the map $\chi : g\mapsto y_{\bar{\bar g}}$ is a finite coloring of $\binom{\m F}{\m A}$. It is in $\ROB(\Aut(\m F))$ because it is constant on the $\Stab(\m A)\backslash \Aut(\m F)/\Stab(\m Z)$-classes, and in addition, for any $g\in G$: $$|\chi(g)-f(g)|=|y_{\bar{\bar g}} - f(g)|\leq |y_{\bar{\bar g}}-f(h_{\bar{\bar g}})|+ |f(h_{\bar{\bar g}})-f(g)|<2\e. \quad \qedhere$$
\end{proof}

Thanks to Theorem \ref{thm:fpRPcpct}, it follows that under the precompactness assumption of $\Aut(\m F)$, every Roelcke flow has a fixed point iff $\m F$ has the Ramsey property for finite colorings in $\ROB(\Aut(\m F))$. To see how this leads to Theorem \ref{thm:fpRoecke}, we now turn to a description of those colorings that are in $\ROB(\Aut(\m F))$. In fact, the previous proof already provides such a description. Indeed, if $f$ is assumed to be a finite coloring, then it has to be constant on every $\Stab(\m A)\backslash \Aut(\m F)/\Stab(\m Z)$-class for $\m A, \m Z$ large enough. This means exactly that $f$ can be seen as a finite coloring of the joint embedding patterns of $\m A$ and $\m Z$. Therefore, we have just proved:  

\begin{prop}
\label{prop:Roelcke coloring}
Let $\m F$ be a Fra\"iss\'e structure, $\m A\in \Age(\m F)$ and $\chi$ be a finite coloring of $\binom{\m F}{\m A}$. Then $\chi\in \ROB(\Aut(\m F))$ iff there is a finite substructure $\m Z$ of $\m F$ such that $\chi(a)$ depends only on $[a,z]$, where $z$ is the identity embedding. 
\end{prop}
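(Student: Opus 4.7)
The plan is to refine the approximation argument used in the previous proposition, exploiting the fact that $\chi$ is already finitely valued to upgrade ``constant up to $\e$'' into strict constancy. The reverse implication is immediate: if $\chi$ depends only on the joint embedding pattern $[a,z]$ for some finite $\m Z$, then $\chi$ is constant on every double coset $\Stab(\m A) g \Stab(\m Z)$, so it is uniformly continuous for the Roelcke uniformity, and since its range is finite it belongs to $\ROB(\Aut(\m F))$.

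For the forward implication, I would pick $\eta > 0$ strictly smaller than half the minimum distance between any two distinct values of $\chi$ in $\C$ (possible since the range of $\chi$ is finite). Roelcke uniform continuity then supplies finite substructures $\m A', \m Z \subset \m F$ such that $\chi$ varies by less than $\eta$ on each $\Stab(\m A')\backslash \Aut(\m F)/\Stab(\m Z)$-class, and the choice of $\eta$ forces $\chi$ to be literally constant on each such class. Enlarging $\m A'$ to contain $\m A$ if necessary only refines this partition, so after this adjustment I may assume $\m A \subset \m A'$, whence $\Stab(\m A') \subset \Stab(\m A)$.

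It then remains to show that $\chi$ is already constant on every larger class $\Stab(\m A) g \Stab(\m Z)$. Writing an arbitrary element of this class as $x = s g t$ with $s \in \Stab(\m A)$ and $t \in \Stab(\m Z)$, observe that $g$ and $g t$ both lie in the single $\Stab(\m A')\backslash \Aut(\m F)/\Stab(\m Z)$-class of $g$, so the previous paragraph gives $\chi(g t) = \chi(g)$; since $\chi$ factors through $\Stab(\m A)\backslash \Aut(\m F)$ (as it is a coloring of $\binom{\m F}{\m A}$), it is also constant on the right coset $\Stab(\m A) (g t)$, which contains $x$, whence $\chi(x) = \chi(g)$. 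Thus $\chi(g)$ depends only on the double coset $\Stab(\m A) g \Stab(\m Z)$, equivalently only on the joint embedding pattern $[\restrict{g^{-1}}{A}, z]$ where $z$ denotes the identity embedding of $\m Z$.

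The only slightly delicate point is this last combination, reconciling the left-$\m A$-invariance coming from $\chi$ being a coloring of $\binom{\m F}{\m A}$ with the double-coset invariance that is initially available only at the larger level $\m A' \supset \m A$; but it is dispatched by the explicit decomposition $x = s g t$ above.
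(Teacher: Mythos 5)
Your proof is correct and takes essentially the same route as the paper: Roelcke uniform continuity together with the finiteness of the range of $\chi$ forces exact constancy on the double cosets $\Stab(\m A')\backslash\Aut(\m F)/\Stab(\m Z)$, and the converse is the observation that a coloring constant on such classes is Roelcke uniformly continuous. The only difference is that you explicitly reconcile the auxiliary $\m A'$ supplied by uniform continuity with the given $\m A$ via the decomposition $x=sgt$ -- a point the paper's one-line argument passes over silently -- and this extra care is sound.
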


\begin{prop}
\label{prop:RoelckeRP}
Let $\m F$ be a Fra\"iss\'e structure. Then $\m F$ has the Ramsey property for colorings in $\ROB(\Aut(\m F))$ iff for every $\m A, \m B, \m Z\in \Age(\m F)$, every $z\in \binom{\m F}{\m Z}$, every finite coloring $\gamma$ of the joint embedding patterns of $\m A$ and $\m Z$, there is $b\in \binom{\m F}{\m B}$ so that the coloring $a\mapsto \gamma([a,z])$ is constant on $\binom{b(\m B)}{\m A}$. 
\end{prop}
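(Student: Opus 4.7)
The strategy is to translate both sides through Proposition~\ref{prop:Roelcke coloring}, which identifies finite Roelcke colorings of $\binom{\m F}{\m A}$ with colorings that factor through the joint embedding pattern $[a, z]$ for $z$ the identity embedding of some finite substructure $\m Z \subset \m F$. A mild technical point, which I address separately, is that Proposition~\ref{prop:Roelcke coloring} is stated only for identity embeddings, whereas the target statement permits an arbitrary $z \in \binom{\m F}{\m Z}$.

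For the direction ``combinatorial $\Rightarrow$ Ramsey'': given $\m A, \m B$ and a finite family $\mathcal C = \{\chi_{1}, \dots, \chi_{k}\} \subset \ROB(\Aut(\m F))$ of finite colorings of $\binom{\m F}{\m A}$, Proposition~\ref{prop:Roelcke coloring} provides finite substructures $\m Z_{1}, \dots, \m Z_{k} \subset \m F$ such that each $\chi_{i}(a)$ depends only on $[a, z_{i}]$, where $z_{i}$ is the identity embedding of $\m Z_{i}$. Pick any finite $\m Z \subset \m F$ containing all the $\m Z_{i}$ and let $z$ be its identity embedding; then $[a,z]$ refines each $[a, z_{i}]$, so the tuple-valued map $\gamma([a,z]) := (\chi_{1}(a), \dots, \chi_{k}(a))$ is a well-defined finite coloring of the joint embedding patterns of $\m A$ and $\m Z$. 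Applying the combinatorial hypothesis to $\m A, \m B, \m Z, z, \gamma$ yields $b \in \binom{\m F}{\m B}$ making all the $\chi_{i}$ simultaneously constant on $\binom{b(\m B)}{\m A}$.

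For the converse, fix $\m A, \m B, \m Z \in \Age(\m F)$, $z \in \binom{\m F}{\m Z}$, and a finite coloring $\gamma$ of the joint embedding patterns of $\m A$ and $\m Z$. Set $\m Z' := z(\m Z)$ and let $z'$ be its identity embedding. Then $z = z' \circ \iota$ for the isomorphism $\iota : \m Z \to \m Z'$ induced by $z$, and for any $a \in \binom{\m F}{\m A}$, $\iota$ induces a canonical bijection between the joint embedding pattern $[a, z]$ and the joint embedding pattern $[a, z']$. Hence $\gamma$ transfers to a finite coloring $\bar\gamma$ on the joint embedding patterns of $\m A$ and $\m Z'$ with $\gamma([a,z]) = \bar\gamma([a, z'])$. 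Define $\chi(a) := \gamma([a,z])$; Proposition~\ref{prop:Roelcke coloring} then guarantees $\chi \in \ROB(\Aut(\m F))$, and the assumed Ramsey property for the singleton $\{\chi\}$ produces the required $b \in \binom{\m F}{\m B}$.

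\textbf{Main obstacle.} The proof is essentially bookkeeping; the only nontrivial step is harmonising the ``identity embedding'' formulation of Proposition~\ref{prop:Roelcke coloring} with the arbitrary-$z$ formulation here, which is resolved by the observation above that joint embedding patterns are isomorphism invariants, so pre-composing $z$ with the isomorphism $\m Z \to z(\m Z)$ does not change the coloring's structure.
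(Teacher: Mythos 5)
Your proof is correct and follows essentially the same route as the paper's: one direction recognizes $a\mapsto\gamma([a,z])$ as a finite coloring in $\ROB(\Aut(\m F))$ via Proposition~\ref{prop:Roelcke coloring}, and the other bundles a finite family of Roelcke colorings into a single tuple-valued coloring of the joint embedding patterns with a common $\m Z$. Your extra care in reconciling the identity-embedding formulation of Proposition~\ref{prop:Roelcke coloring} with an arbitrary $z\in\binom{\m F}{\m Z}$ is a sound (and welcome) elaboration of a step the paper leaves implicit.
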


\begin{proof}
Assume that $\m F$ has the Ramsey property for colorings in $\ROB(\Aut(\m F))$, and fix $\m A, \m B, \m Z\in \Age(\m F)$, $z\in \binom{\m F}{\m Z}$, $\gamma$ a finite coloring of the joint embedding patterns of $\m A$ and $\m Z$. Then the coloring defined on $\binom{\m F}{\m A}$ by $a\mapsto \gamma([a,z])$ is in $\ROB(\Aut(\m F))$ by Proposition \ref{prop:Roelcke coloring}. The conclusion follows. The converse is an immediate consequence of the following easy fact: if $\mathcal C = \{ \chi_{i}:i\leq n\} \subset \ROB(\Aut(\m F))$ is a finite set of finite colorings, which we may assume to be colorings of $\binom{\m F}{\m A}$, Proposition \ref{prop:Roelcke coloring} guarantees that each $\chi_{i}$ is associated to some finite $\m Z^{i}\subset \Age(\m F)$ and finite coloring $\gamma_{i}$ of the joint embedding patterns of $\m A$ and $\m Z^{i}$. Then, we see that the hypothesis applied to $\m Z=\bigcup_{i\leq n}z^{i}(\m Z^{i})$ and $\gamma$ defined by $\gamma([a,z]) = (\gamma_{i}([a,z^{i}]))_{i\leq n}$ provides $b\in \binom{\m F}{\m B}$ so that for every $i\leq n$, the coloring $a\mapsto \gamma_{i}([a,z^{i}])$ is constant on $\binom{b(\m B)}{\m A}$. \end{proof}

\begin{prop}
\label{prop:RPRoelcke}
Let $\m F$ be a Fra\"iss\'e structure. Then $\m F$ has the Ramsey property for colorings in $\ROB(\Aut(\m F))$ iff for every $\m A, \m B, \m Z\in \Age(\m F)$, every finite coloring $\gamma$ of the joint embedding patterns of $\m A$ and $\m Z$, there exists a joint embedding $\langle b,z\rangle$ such that the coloring $a\mapsto \gamma([a,z])$ is constant on $\binom{b(\m B)}{\m A}$. 
\end{prop}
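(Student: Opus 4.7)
The plan is to show that the joint-embedding condition stated here is equivalent to the fixed-$z$ condition of Proposition \ref{prop:RoelckeRP}; this will immediately yield the desired characterization of the Ramsey property for colorings in $\ROB(\Aut(\m F))$.

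The forward implication is essentially tautological. Given $\m A, \m B, \m Z \in \Age(\m F)$ and a finite coloring $\gamma$ of the joint embedding patterns of $\m A$ and $\m Z$, I would fix any particular $z \in \binom{\m F}{\m Z}$ (say the identity embedding of $\m Z$ into $\m F$), apply Proposition \ref{prop:RoelckeRP} to produce $b \in \binom{\m F}{\m B}$ on which $a \mapsto \gamma([a,z])$ is constant, and simply observe that $\langle b, z\rangle$ qualifies as a joint embedding of $\m B$ and $\m Z$, with ambient structure $b(\m B) \cup z(\m Z)$ inherited from $\m F$.

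For the converse, the main move is to use the ultrahomogeneity of $\m F$ to pass from ``some'' joint embedding to one with a prescribed $z$. Assuming the condition in the statement, I take $\m A, \m B, \m Z$, an arbitrary $z \in \binom{\m F}{\m Z}$, and a finite coloring $\gamma$. The hypothesis yields a joint embedding $\langle b_0, z_0\rangle$ of $\m B$ and $\m Z$ such that $a \mapsto \gamma([a, z_0])$ is constant on $\binom{b_0(\m B)}{\m A}$. Since $\m F$ is universal for its age, I may assume $b_0$ and $z_0$ both take values in $\m F$. Then $z \circ z_0^{-1}$ is an isomorphism between two finite substructures of $\m F$, which by ultrahomogeneity extends to some $\sigma \in \Aut(\m F)$ with $\sigma \circ z_0 = z$. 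Setting $b := \sigma \circ b_0$, for every $a \in \binom{b(\m B)}{\m A}$ I can write $a = \sigma \circ a_0$ with $a_0 \in \binom{b_0(\m B)}{\m A}$; then $[a, z] = [\sigma \circ a_0, \sigma \circ z_0] = [a_0, z_0]$, so $\gamma([a,z])$ is constant in $a$, as required by Proposition \ref{prop:RoelckeRP}.

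The only subtlety, which is genuinely minor, is checking that joint embedding patterns are invariant under post-composition by an automorphism of $\m F$; this is immediate since $\sigma$ supplies an isomorphism between $a_0(\m A) \cup z_0(\m Z)$ and $(\sigma \circ a_0)(\m A) \cup (\sigma \circ z_0)(\m Z) = a(\m A) \cup z(\m Z)$ that intertwines the two pairs of embeddings. I expect no other obstacle in the argument.
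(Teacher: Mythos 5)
Your proposal is correct and follows essentially the same route as the paper: the forward direction is immediate from Proposition \ref{prop:RoelckeRP}, and the converse uses ultrahomogeneity to extend the isomorphism identifying $z_0(\m Z)$ with $z(\m Z)$ to an automorphism $\sigma$ of $\m F$ and then replaces $b_0$ by $\sigma\circ b_0$, exactly as in the paper's argument (where the extension is denoted $i$). The additional remarks about landing the joint embedding inside $\m F$ and the invariance of joint embedding patterns under post-composition by $\sigma$ are correct and only make explicit what the paper leaves implicit.
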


\begin{proof}
Assume that the Ramsey property for colorings in $\ROB(\Aut(\m F))$ holds, and fix $\m A, \m B, \m Z\in \Age(\m F)$, $\gamma$ a finite coloring of the joint embedding patterns of $\m A$ and $\m Z$. Fix $z\in \binom{\m F}{\m Z}$. Then $b\in \binom{\m F}{\m B}$ obtained by Proposition \ref{prop:RoelckeRP} is as required. Conversely, fix $\m A, \m B, \m Z\in \Age(\m F)$, $z\in \binom{\m F}{\m Z}$, $\gamma$ a finite coloring of the joint embedding patterns of $\m A$ and $\m Z$. Consider a joint embedding $\langle b',z'\rangle$ such that $\gamma([\cdot,z'])$ is constant on $\binom{b'(\m B)}{\m A}$. Let $i$ be the unique isomorphism such that $i\circ z'=z$. Then by ultrahomogeneity of $\m F$, we can extend $i$ to $b'(\m B)\cup z'(\m Z)$, and $b:=i\circ b'$ is as required. 
\end{proof}

\begin{proof}[Proof of Theorem \ref{thm:fpRoecke}]
Thanks to Theorem \ref{thm:fpRPcpct}, $\Aut(\m F)\actson R(\Aut(\m F))$ has a fixed point iff $\m F$ has the Ramsey property for colorings in $\ROB(\Aut(\m F))$. Apply then Proposition \ref{prop:RPRoelcke}. When $\Aut(\m F)$ is Roelcke-precompact, the additional statement is a reformulation of $ii)$ with the coloring $\gamma([a,z]):=[a,z]$, which is finite by Proposition \ref{prop:Roelcke-precompact}.  
\end{proof}

\subsection{Trivial minimal subflows in the Roelcke compactification and the definable Ramsey property}

\label{subsection:defRP}

We now turn to the proof of Theorem \ref{thm:minRoeckeDefRP}, where we do assume from the beginning that $\Aut(\m F)$ is Roelcke-precompact. Since we wish to do so via an application of Corollary \ref{cor:fpRP'}, we need to understand first how functions of the form $f_{x}$ look like when $f\in \ROB(\Aut(\m F))$ and $x\in R(\Aut(\m F))$. This is possible thanks to a convenient representation of the elements of $R(\Aut(\m F))$. Thanks to the discussion at the beginning of Subsection \ref{subsection:Roelcke}, a typical open neighborhood around a point $g\in \Aut(\m F)$ in $R(\Aut(\m F))$ is determined by all those $h\in \Aut(\m F)$ so that $\langle \restrict{h^{-1}}{A}, z \rangle \cong \langle \restrict{g^{-1}}{A}, z \rangle$, where $\m A$ and $\m Z$ are finite substructures of $\m F$ and $z$ is the natural inclusion map of $\m Z$ in $\m F$. In particular, letting $\m A$ and $\m Z$ being equal to the substructure $\m F_{n}$ of $\m F$ supported by $\{ k : k\leq n\}$ for each $n\in \N$ (recall that $\m F$ is based on $\N$), we obtain the nested sequence of clopen sets $$[\restrict{g^{-1}}{\m F_{n}}, \restrict{e_{\Aut(\m F)}}{\m F_{n}}]$$ whose intersection can be thought as $[g^{-1},e_{\Aut(\m F)}]$. In other words, in $R(\Aut(\m F))$, $g\in \Aut(\m F)$ is identified with $[g^{-1},e_{\Aut(\m F)}]$. In general, it is not too difficult to see that in $R(\Aut(\m F))$, a Cauchy sequence of elements of $\Aut(\m F)$ essentially corresponds to a coherent sequence of joint embedding patterns of two copies of $\m F_{0}$, $\m F_{1}$, $\m F_{2}$... which naturally converges to the pattern $[\phi_{1},\phi_{2}]$ of a joint embedding $\langle\phi_{1},\phi_{2}\rangle$ of two copies of $\m F$. A basic open neighborhood around this point is of the form $[\restrict{\phi_{1}}{\m A},\restrict{\phi_{2}}{\m Z}]$, with $\m A, \m Z$ finite substructures of $\m F$. To describe the action $\Aut(\m F)\actson R(\Aut(\m F))$, it suffices to observe that for $g, h\in \Aut(\m F)$, $gh$ is identified with $[h^{-1}\circ g^{-1},e_{\Aut(\m F)}]$. So, in general, since the action of $\Aut(\m F)$ on $R(\Aut(\m F))$ extends the left-regular action of $\Aut(\m F)$ on itself, we have,  for every $[\phi_{1},\phi_{2}]\in R(\Aut(\m F))$, $$g\cdot[\phi_{1},\phi_{2}] = [\phi_{1}\circ g^{-1},\phi_{2}]$$ 

\begin{prop}
Let $\m F$ be a Fra\"iss\'e structure with Roelcke-precompact automorphism group, and $x\in R(\Aut(\m F))$. Then $\ROB(\Aut(\m F))_{x}$ can be approximated by finite colorings. 
\end{prop}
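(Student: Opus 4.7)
The plan is to adapt the proof that finite colorings are dense in $\ROB(\Aut(\m F))$ (given just after Proposition \ref{prop:Roelcke-precompact}) to the subspace $\ROB(\Aut(\m F))_{x}$, using the description of the points of $R(\Aut(\m F))$ as joint embedding patterns $[\phi_{1},\phi_{2}]$ of two copies of $\m F$ established immediately before the statement. Fix $F\in \ROB(\Aut(\m F))$ and $\varepsilon>0$, write $x=[\phi_{1},\phi_{2}]$, and recall that $g\cdot x=[\phi_{1}\circ g^{-1},\phi_{2}]$. The goal is to produce a finite coloring $\chi\in \ROB(\Aut(\m F))_{x}$ with $\|F_{x}-\chi\|_{\infty}<\varepsilon$.

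Let $\hat F\in C(R(\Aut(\m F)))$ be the continuous extension of $F$. Since the basic clopen neighborhoods in $R(\Aut(\m F))$ are of the form $[\restrict{\psi_{1}}{\m A},\restrict{\psi_{2}}{\m Z}]$ with $\m A,\m Z\subset \m F$ finite, uniform continuity of $\hat F$ on the compact space $R(\Aut(\m F))$ yields finite $\m A,\m Z$ such that $\hat F$ oscillates by less than $\varepsilon$ on each such piece. Proposition \ref{prop:Roelcke-precompact} then guarantees that the set $P$ of joint embedding patterns of $\m A$ and $\m Z$ is finite; choose a representative $y_{p}\in R(\Aut(\m F))$ with $(\m A,\m Z)$-pattern $p$ for each $p\in P$, and define $\tilde F:R(\Aut(\m F))\to \C$ by $\tilde F(y):=\hat F(y_{p(y)})$, where $p(y)$ denotes the $(\m A,\m Z)$-pattern of $y$. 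Being constant on each of the finitely many basic clopen pieces, $\tilde F$ is continuous, has range of size at most $|P|$, and satisfies $\|\hat F-\tilde F\|_{\infty}<\varepsilon$; in particular $\tilde F\in \ROB(\Aut(\m F))$.

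Setting $\chi:=\tilde F_{x}\in \ROB(\Aut(\m F))_{x}$, the formula $\chi(g)=\hat F(y_{[\phi_{1}\circ \restrict{g^{-1}}{\m A},\restrict{\phi_{2}}{\m Z}]})$ shows that $\chi(g)$ depends only on $\restrict{g^{-1}}{\m A}$ and takes at most $|P|$ values, so $\chi$ is a finite coloring of $\binom{\m F}{\m A}$ in the sense of Section \ref{section:colorings}. Finally, $\|F_{x}-\chi\|_{\infty}\leq \|\hat F-\tilde F\|_{\infty}<\varepsilon$, which completes the approximation. The only nontrivial ingredient is the finiteness of $P$, afforded by Roelcke-precompactness; without this hypothesis the analogous construction would give only a continuous approximation of $\hat F$, not a finite-range one.
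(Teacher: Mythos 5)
Your proof is correct and follows essentially the same route as the paper's: uniform continuity of the extension of $F$ to $R(\Aut(\m F))$ produces the finite substructures $\m A,\m Z$, Roelcke-precompactness (Proposition \ref{prop:Roelcke-precompact}) gives finiteness of the set of $(\m A,\m Z)$-patterns, and replacing $\hat F$ by a constant on each clopen piece yields the finite coloring. The only cosmetic difference is that you build a global finite-valued approximant $\tilde F$ on $R(\Aut(\m F))$ and then pass to $\tilde F_{x}$, whereas the paper approximates $f_{x}$ directly; the substance is identical.
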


\begin{proof}
Let $x\in R(\Aut(\m F))$, $f\in \ROB(\Aut(\m F))$ and $\e>0$. From the previous discussion, $x$ is of the form $x=[\phi_{1},\phi_{2}]$ and $f_{x}(g)=f([\phi_{1}\circ g^{-1},\phi_{2}])$ (where $f$ is now seen as a continuous function on $R(\Aut(\m F)$). By uniform continuity of $f$, there are two finite substructures $\m A, \m Z$ of $\m F$ so that for every $g,h\in \Aut(\m F)$, $$\langle\restrict{\phi_{1}\circ g^{-1}}{\m A}, \restrict{\phi_{2}}{\m Z}\rangle\cong\langle\restrict{\phi_{1}\circ h^{-1}}{\m A}, \restrict{\phi_{2}}{\m Z}\rangle \Rightarrow |f_{x}(g)-f_{x}(h)|<\e$$ 

Since $\Aut(\m F)$ is Roelcke-precompact, by Proposition \ref{prop:Roelcke-precompact}, there are only finitely joint embedding patterns of the form $[\phi_{1}\circ a, \phi_{2}\circ z]$. By choosing appropriate constants for each of these, we obtain $\chi\in\ROB(\Aut(\m F))_{x}$ so that $\|f_{x}-\chi\|_{\infty}<\e$, and which can be thought as a finite coloring of $\binom{\m F}{\m A}$. 
\end{proof}

As in Proposition \ref{prop:Roelcke coloring}, the previous proof also provides a description of those finite colorings that are in $\ROB(\Aut(\m F))_{x}$: if $f_{x}$ is assumed to be a finite coloring, then for $\m A, \m Z$ large enough finite substructures of $\m F$, it has to give same value to any two $g, h\in \Aut(\m F)$ which satisfy $\langle\restrict{\phi_{1}\circ g^{-1}}{\m A}, \restrict{\phi_{2}\circ z}{\m Z}\rangle\cong\langle\restrict{\phi_{1}\circ h^{-1}}{\m A}, \restrict{\phi_{2}\circ z}{\m Z}\rangle$. This means exactly that $f_{x}$ can be seen as a finite coloring of $\binom{\m F}{\m A}$ whose value at $a$ depends only on the joint embedding pattern $[\phi_{1}\circ a,\phi_{2}\circ z]$.
We have just proved:

\begin{prop}
\label{prop:Roelcke ext coloring}
Let $\m F$ be a Fra\"iss\'e structure with Roelcke-precompact automorphism group, $x=[\phi_{1},\phi_{2}]\in R(\Aut(\m F))$, $\m A\in \Age(\m F)$ and $\chi$ be a finite coloring of $\binom{\m F}{\m A}$. Then $\chi\in \ROB(\Aut(\m F))_{x}$ iff there is $\m Z\in \Age(\m F)$ and a joint embedding of the form $\langle \phi_{1}, \phi_{2}\circ z \rangle$ of $\m F$ and $\m Z$ such that $\chi(a)$ depends only on the joint embedding pattern $[\phi_{1}\circ a,\phi_{2}\circ z]$.  

\end{prop}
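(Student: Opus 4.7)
The plan is to mirror, in the exact regime, the approximation argument from the previous proposition (which established that $\ROB(\Aut(\m F))_{x}$ is uniformly approximable by finite colorings). The key identification, already spelled out in the paragraph preceding the statement, is that under $g \cdot [\phi_{1},\phi_{2}] = [\phi_{1}\circ g^{-1},\phi_{2}]$ one has $f_{x}(g) = f([\phi_{1}\circ g^{-1},\phi_{2}])$, while a basic open neighborhood in $R(\Aut(\m F))$ around a point $[\psi_{1},\psi_{2}]$ is indexed by the joint embedding pattern $[\restrict{\psi_{1}}{A},\restrict{\psi_{2}}{Z}]$ for finite substructures $\m A,\m Z$ of $\m F$.

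For the forward direction, assume $\chi = f_{x}$ for some $f\in \ROB(\Aut(\m F))$. Since $\chi$ takes finitely many values, choose $\e$ strictly less than half the minimum distance between distinct values of $\chi$. Uniform continuity of $f$, viewed as a continuous function on $R(\Aut(\m F))$, yields $\m A,\m Z\in \Age(\m F)$ such that $f$ is constant up to $\e$ on each clopen piece indexed by a fixed pattern $[\restrict{\psi_{1}}{A},\restrict{\psi_{2}}{Z}]$. The choice of $\e$ then forces $\chi(g)$ to depend only on $[\restrict{\phi_{1}\circ g^{-1}}{A},\restrict{\phi_{2}}{Z}]$. Taking $z\in \binom{\m F}{\m Z}$ to be the natural inclusion and writing $a:=\restrict{g^{-1}}{A}$ (every element of $\binom{\m F}{\m A}$ arises this way by ultrahomogeneity of $\m F$) yields precisely the stated dependence of $\chi(a)$ on $[\phi_{1}\circ a,\phi_{2}\circ z]$.

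For the backward direction, suppose $\m Z$, $z$, and a coloring $\gamma$ are given with $\chi(a)=\gamma([\phi_{1}\circ a,\phi_{2}\circ z])$ for every $a\in \binom{\m F}{\m A}$. By Proposition \ref{prop:Roelcke-precompact} there are only finitely many patterns $[\restrict{\psi_{1}}{A},\restrict{\psi_{2}}{Z}]$, so the associated clopen partition of $R(\Aut(\m F))$ is finite. Define $f:R(\Aut(\m F))\to\C$ by setting, on the piece corresponding to a pattern of the form $[\phi_{1}\circ a,\phi_{2}\circ z]$, the value $\gamma([\phi_{1}\circ a,\phi_{2}\circ z])$ (well-defined by hypothesis), and any fixed constant on the remaining pieces. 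This $f$ is locally constant, hence continuous, hence in $\ROB(\Aut(\m F))$, and a direct unwinding gives $f_{x}(g)=f([\phi_{1}\circ g^{-1},\phi_{2}])=\gamma([\phi_{1}\circ a,\phi_{2}\circ z])=\chi(a)$ as required.

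I expect no serious obstacle; the only care needed is in the bookkeeping around the two parallel uses of the finite data $(\m A,\m Z)$ — as the scale at which a clopen neighborhood basis in $R(\Aut(\m F))$ is read off, and simultaneously as the combinatorial data of the joint embedding pattern that $\chi$ respects — together with the reminder that $z$ is an embedding (so $\phi_{2}\circ z$ is a genuine embedding of $\m Z$) rather than a mere inclusion. Once this matching is set up, both implications reduce to unfolding the definitions of the Roelcke uniformity and the action on $R(\Aut(\m F))$.
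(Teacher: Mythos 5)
Your argument is correct and follows the same route as the paper: the forward direction is exactly the paper's observation that uniform continuity of $f$ on $R(\Aut(\m F))$ at some scale $(\m A,\m Z)$, combined with the finiteness of the range of $\chi=f_{x}$ and a sufficiently small $\e$, forces exact dependence on the pattern $[\phi_{1}\circ a,\phi_{2}\circ z]$; and your backward direction (a locally constant function on the finite clopen partition given by Roelcke precompactness) makes explicit the converse that the paper leaves as an immediate remark. No gaps.
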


\begin{prop}
\label{prop:RPExtRoelcke}
Let $\m F$ be a Fra\"iss\'e structure with Roelcke-precompact automorphism group and let $x=[\phi_{1},\phi_{2}]\in R(\Aut(\m F))$. Then $\m F$ has the Ramsey property for colorings in $\ROB(\Aut(\m F))_{x}$ iff for every $\m A, \m B \in \Age(\m F)$, every $\m Z\in \Age(\m F)$, every joint embedding of the form $\langle \phi_{1},\phi_{2}\circ z\rangle$ of $\m F$ and $\m Z$, there is $b\in \binom{\m F}{\m B}$ so that the coloring $a\mapsto [ \phi_{1}\circ a,\phi_{2}\circ z]$ does not depend on $a$ on $\binom{b(\m B)}{\m A}$. 

\end{prop}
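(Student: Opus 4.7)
The proposition is the analog for $\ROB(\Aut(\m F))_x$ of Propositions \ref{prop:RoelckeRP} and \ref{prop:RPRoelcke} (which handled the unextended algebra $\ROB(\Aut(\m F))$), and the plan is to mimic their proofs using Proposition \ref{prop:Roelcke ext coloring} as the key representation theorem for finite colorings in $\ROB(\Aut(\m F))_x$.

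For the forward implication, fix $\m A, \m B, \m Z \in \Age(\m F)$ and a joint embedding $\langle \phi_1, \phi_2 \circ z\rangle$ of $\m F$ and $\m Z$. The candidate coloring $\chi : a \mapsto [\phi_1 \circ a, \phi_2 \circ z]$ on $\binom{\m F}{\m A}$ takes only finitely many values thanks to Roelcke-precompactness via Proposition \ref{prop:Roelcke-precompact}, and by Proposition \ref{prop:Roelcke ext coloring} it belongs to $\ROB(\Aut(\m F))_x$ (with the trivial choice of $\gamma$ given by the identity on joint embedding patterns). Applying the hypothesized Ramsey property to $\chi$ produces the desired $b \in \binom{\m F}{\m B}$.

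For the backward implication, take any finite family $\mathcal C = \{\chi_1, \dots, \chi_n\} \subset \ROB(\Aut(\m F))_x$ of finite colorings, which without loss of generality can be assumed to be colorings of the same $\binom{\m F}{\m A}$. By Proposition \ref{prop:Roelcke ext coloring}, each $\chi_i$ is associated with some $\m Z^i \in \Age(\m F)$ and some $z^i$ making $\langle \phi_1, \phi_2 \circ z^i\rangle$ a joint embedding, in such a way that $\chi_i(a)$ depends only on $[\phi_1 \circ a, \phi_2 \circ z^i]$. The amalgamation step is to let $\m Z$ be the finite substructure of $\m F$ whose underlying set is $\bigcup_{i \leq n} z^i(Z^i)$, and let $z : \m Z \hookrightarrow \m F$ be the inclusion; this single joint embedding $\langle \phi_1, \phi_2 \circ z\rangle$ refines all of the individual ones, in the sense that $[\phi_1 \circ a, \phi_2 \circ z] = [\phi_1 \circ a', \phi_2 \circ z]$ forces $[\phi_1 \circ a, \phi_2 \circ z^i] = [\phi_1 \circ a', \phi_2 \circ z^i]$ for every $i$, and hence $\chi_i(a) = \chi_i(a')$. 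Applying the hypothesis to $\m A, \m B$ and the joint embedding $\langle \phi_1, \phi_2 \circ z\rangle$ yields $b \in \binom{\m F}{\m B}$ on which $a \mapsto [\phi_1 \circ a, \phi_2 \circ z]$ is constant, and this $b$ witnesses the Ramsey property for $\mathcal C$.

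The proof contains no real obstacle beyond the amalgamation step, which is the only place requiring attention: one must verify that the restriction of a single $\phi_2 \circ z$ to each $z^i(Z^i)$ genuinely recovers the joint embedding pattern that controls $\chi_i$. Everything else is a formal unpacking of Proposition \ref{prop:Roelcke ext coloring} and is strictly parallel to the non-extended case treated earlier in the section.
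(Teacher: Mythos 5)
Your proof is correct and follows essentially the same route as the paper's: the forward direction applies the Ramsey property to the finite coloring $a\mapsto[\phi_{1}\circ a,\phi_{2}\circ z]$ (finite by Proposition \ref{prop:Roelcke-precompact}, in $\ROB(\Aut(\m F))_{x}$ by Proposition \ref{prop:Roelcke ext coloring}), and the converse combines finitely many colorings into one via a single $\m Z=\bigcup_{i}z^{i}(\m Z^{i})$, exactly as the paper does by reference to the proof of Proposition \ref{prop:RoelckeRP}. Your explicit verification of the refinement step (that the pattern $[\phi_{1}\circ a,\phi_{2}\circ z]$ determines each $[\phi_{1}\circ a,\phi_{2}\circ z^{i}]$, using that the language is relational) is a point the paper leaves implicit.
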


\begin{proof}
Assume that $\m F$ has the Ramsey property for colorings in $\ROB(\Aut(\m F))_{x}$, and fix $\m A, \m B, \m Z\in \Age(\m F)$ together with a joint embedding of $\m F$ and $\m Z$ of the form $\langle \phi_{1}, \phi_{2}\circ z\rangle$. Then the coloring defined on $\binom{\m F}{\m A}$ by $a\mapsto [\phi_{1} \circ a,\phi_{2}\circ z]$ is finite by Proposition \ref{prop:Roelcke-precompact}, and is in $\ROB(\Aut(\m F))_{x}$ by Proposition \ref{prop:Roelcke ext coloring}. The conclusion follows. The converse is an immediate consequence of Proposition \ref{prop:Roelcke ext coloring}, and of the fact that any finite set $\mathcal C$ of finite colorings in $\ROB(\Aut(\m F))_{x}$ can be captured by one single such coloring, as in the proof of Proposition \ref{prop:RoelckeRP}. 
\end{proof}

\begin{prop}
\label{prop:DefinableRP}
Let $\m F$ be a Fra\"iss\'e structure with Roelcke-precompact  automorphism group. Then $\m F$ has the Ramsey property for colorings in $\ROB(\Aut(\m F))_{x}$ for every $x\in R(\Aut(\m F))\}$ iff $\Age(\m F)$ has the definable Ramsey property. 
\end{prop}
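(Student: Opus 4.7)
The plan is to unfold the hypothesis via Proposition \ref{prop:RPExtRoelcke} into a property of joint embeddings of $\m F$ with $\m Z$, and then exploit Roelcke-precompactness through a K\"onig-style compactness argument. By Proposition \ref{prop:RPExtRoelcke}, the hypothesis reads: for every $x = [\phi_{1}, \phi_{2}] \in R(\Aut(\m F))$, every $\m A, \m B, \m Z \in \Age(\m F)$, and every $z \in \binom{\m F}{\m Z}$, there is $b \in \binom{\m F}{\m B}$ with $a \mapsto [\phi_{1} \circ a, \phi_{2} \circ z]$ constant on $\binom{b(\m B)}{\m A}$. My first step is to observe that as $x$ and $z$ vary, the pairs $\langle \phi_{1}, \phi_{2} \circ z\rangle$ exhaust all joint embeddings of $\m F$ with $\m Z$: one direction is obvious, and for the other one amalgamates the ambient of a given $\langle \psi_{1}, \psi_{2}\rangle$ with a copy of $\m F$ over $\m Z$, which is permitted because $\Age(\m F)$ has amalgamation. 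Call the resulting condition $(*)$: for every $\m A, \m B, \m Z \in \Age(\m F)$ and every joint embedding $\langle \psi_{1}, \psi_{2}\rangle$ of $\m F$ and $\m Z$, there exists $b \in \binom{\m F}{\m B}$ such that $a \mapsto [\psi_{1} \circ a, \psi_{2}]$ is constant on $\binom{b(\m B)}{\m A}$. It remains to check that $(*)$ is equivalent to the definable Ramsey property.

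The implication from the definable Ramsey property to $(*)$ is routine. Given $\langle \psi_{1}, \psi_{2}\rangle$, apply the definable Ramsey property to obtain $\m C \in \Age(\m F)$, pick any $c_{0} \in \binom{\m F}{\m C}$, apply the Ramsey-type conclusion to the joint embedding $\langle \psi_{1} \circ c_{0}, \psi_{2}\rangle$ of $\m C$ and $\m Z$ to get $b_{0} \in \binom{\m C}{\m B}$, and set $b := c_{0} \circ b_{0}$.

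For the converse I would argue by contradiction. Assume definable Ramsey fails for some $(\m A, \m B, \m Z)$, and fix an exhaustion $\m F = \bigcup_{n} \m F_{n}$ by finite substructures. For each $n$, let $B_{n}$ be the set of joint embedding patterns $[c, z]$ of $\m F_{n}$ with $\m Z$ that are \emph{bad}, meaning no $b \in \binom{\m F_{n}}{\m B}$ makes $a \mapsto [c \circ a, z]$ constant on $\binom{b(\m B)}{\m A}$. Each $B_{n}$ is non-empty by the failure applied to $\m C = \m F_{n}$, finite by Roelcke-precompactness (Proposition \ref{prop:Roelcke-precompact}), and the restriction $[c, z] \mapsto [c \upharpoonright \m F_{n}, z]$ sends $B_{n+1}$ into $B_{n}$, because $\binom{\m F_{n}}{\m B} \subseteq \binom{\m F_{n+1}}{\m B}$ and the coloring on $\binom{b(\m B)}{\m A}$ only depends on $c \upharpoonright \m F_{n}$ when $b \in \binom{\m F_{n}}{\m B}$. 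K\"onig's lemma applied to this inverse system of non-empty finite sets then produces a coherent sequence $([c_{n}, z_{n}])_{n}$; its entries assemble into a joint embedding $\langle \psi_{1}, \psi_{2}\rangle$ of $\m F$ with $\m Z$ (in the union of the ambients) realizing every $[c_{n}, z_{n}]$ as the pattern of $[\psi_{1} \upharpoonright \m F_{n}, \psi_{2}]$. Applying $(*)$ to $\langle \psi_{1}, \psi_{2}\rangle$ would supply some $b \in \binom{\m F}{\m B}$, but $b \in \binom{\m F_{n}}{\m B}$ for $n$ large enough, contradicting the badness of $[c_{n}, z_{n}]$.

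The two delicate points are the amalgamation in the first step, which identifies arbitrary joint embeddings of $\m F$ with $\m Z$ as $\langle \phi_{1}, \phi_{2} \circ z\rangle$ for some $x$ and $z$, and the assembly of a coherent sequence of patterns in $\varprojlim_{n} B_{n}$ into an actual joint embedding of $\m F$ with $\m Z$ in a common ambient; both require care in tracking the ambient structures but are standard once the setup is fixed. Everything else is an immediate consequence of the K\"onig-style compactness argument enabled by Roelcke-precompactness.
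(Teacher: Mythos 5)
Your proposal is correct and follows essentially the same route as the paper's proof: the converse direction is the observation that the definable Ramsey property is a finitization of the hypothesis as unfolded by Proposition \ref{prop:RPExtRoelcke}, and the direct direction is the same K\"onig-type compactness argument, with Roelcke-precompactness (Proposition \ref{prop:Roelcke-precompact}) supplying finite branching and the resulting branch assembled into a joint embedding of $\m F$ with $\m Z$ that is then promoted to a point of $R(\Aut(\m F))$. Your packaging of the tree as an inverse system of finite sets of bad patterns, and your explicit flagging of the amalgamation step needed to realize an arbitrary joint embedding of $\m F$ with $\m Z$ as some $\langle \phi_{1},\phi_{2}\circ z\rangle$, are only cosmetic variations on the argument in the text.
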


\begin{proof}
From Proposition \ref{prop:RPExtRoelcke}, it appears that the definable Ramsey property is nothing else than a finitization of the fact that for every $x\in R(\Aut(\m F))$, the Ramsey property holds for colorings in $\ROB(\Aut(\m F))_{x}$. This proves the converse implication. 

The direct implication is obtained by a standard compactness argument: Assume that we can find $\m A, \m B, \m Z$ finite substructures of $\m F$ such that for every $\m C\in \Age(\m F)$, there exists a joint embedding  $\langle c,z \rangle$ such that no $b\in \binom{\m C}{\m B}$ satisfies that the map $a\mapsto [a,z]$ is constant on $\binom{b(\m B)}{\m A}$. Consider now the sequence $(\m F_{n})_{n\in \N}$ of initial segments of $\m F$ (recall that $\m F$ is based on $\N$ and that $\m F_{n}$ is the substructures of $\m F$ supported by $\{ k : k\leq n\}$). Each comes with some joint embedding pattern $[\phi_{n}, z_{n}]$ witnessing the failure of the definable Ramsey property. Note that we may assume that each $\phi_{n}$ is just the natural inclusion map from $\m F_{n}$ in $\m F$. Closing off this set of joint embedding patterns under initial segments of the first coordinate, we obtain a countable set whose elements are $[\phi_{m}, z_{n}]$, with $m\leq n$. Setting $[\phi_{m}, z_{n}] \leq [\phi_{p}, z_{q}]$ when $m\leq p$ and $[\phi_{m},z_{q}]=[\phi_{m},z_{n}]$, this becomes a countable tree, which is finitely branching since $\Aut(\m F)$ is Roelcke-precompact (Proposition \ref{prop:Roelcke-precompact}). By K\"onig's lemma, this tree contains an infinite branch, which can be seen as a joint embedding pattern $[\phi, z]$ of $\m F$ and $\m Z$. By construction, there is no $b\in \binom{\phi(\m F)}{\m B}$ such that $a\mapsto [a,z]$ is constant on $\binom{b(\m B)}{\m A}$. Therefore, the Ramsey property for colorings in $\ROB(\Aut(\m F))_{x}$ fails for any $x=[\phi_{1}, \phi_{2}]\in R(\Aut(\m F))$ satisfying $\phi_{1}=\phi$ and $\phi_{2}$ extending $z$ to $\m F$. \end{proof}

\begin{proof}[Proof of Theorem \ref{thm:minRoeckeDefRP}]
By Corollary \ref{cor:fpRP'}, the minimal subflows of $\Aut(\m F)\actson R(\Aut(\m F))$ are trivial iff for every $x\in R(\Aut(\m F))$, $\m F$ has the Ramsey property for colorings in $\ROB(\Aut(\m F))_{x}$. Apply then Proposition \ref{prop:DefinableRP}. \end{proof}

%
%
%
%
%
%
%

\subsection{Remarks}

\label{subsection:Remarks}

\subsubsection{Roelcke flows}

It is easy to see that the factors of $G\actson R(G)$ are exactly the $G$-flows $G\actson X$ such that for some $x\in X$, the map $G\rightarrow X$, $g\mapsto g\cdot x$ is both left and right uniformly continuous. Equivalently, there exists a right-action $\overline{G\cdot x}\curvearrowleft G$ commuting with the action $G\actson X$ such that $$ \forall g\in G\ \ g\cdot x = x \cdot g$$

Note that $g\mapsto g\cdot x$ is right uniformly continuous for any $G$-flow, so the definition of Roelcke flow really lies on the left uniform continuity of this map. Note also that a subflow of a Roelcke flow may not be Roelcke itself. For that reason, while it is easy to translate Theorem \ref{thm:fpRoecke} in terms of Roelcke flows (it characterizes when every Roelcke flow has a fixed point), the meaning of Theorem \ref{thm:minRoeckeDefRP} is much less clear.  

As the class of Roelcke flows does not seem to be of particular interest, let us simply mention that it is quite closely related to the class of \emph{strongly continuous flows} as defined by Glasner-Mergrelishvili in \cite{GlM2}, which is much better-behaved. However, in the case of Roelcke precompact groups, Ibarluc\'ia has shown in \cite{I1} that the corresponding subalgebra of $\RUCB(G)$ corresponds to the weakly almost periodic algebra (see Section \ref{subsection:WAP}). The study of the fixed point property on strongly continuous flows therefore reduces to that of equicontinuous and distal flows, which are treated in Section \ref{section:map}.  

%
%
%
%

\subsubsection{Minimal almost periodicity of the orthogonal group of $\ell^{2}$}

\label{subsubsection:mapell2}

It was mentioned in introduction that the orthogonal group $O(\ell^{2})$ of $\ell^{2}$ equipped with the strong operator topology can be shown to be minimally almost periodic thanks to Theorem \ref{thm:fpRoecke}. Here is the proof: consider the class of all finite metric spaces with distances in $\Q$ that embed isometrically in an affinely independent way in $\ell^{2}$. This is a Fra\"iss\'e class, for which it is easy to show via some elementary geometry that item $ii)$ of Theorem \ref{thm:fpRoecke} holds. The corresponding Fra\"iss\'e limit $\m H_{\Q}^{\mathrm{ind}}$ is a countable dense metric subspace of $\ell^{2}$ (see \cite[Chapter 1, Section 4.3]{NVT1}, from which the proof can be adapted easily), whose isometry group is therefore minimally almost periodic. This group embeds continuously and densely into $O(\ell^{2})$, which suffices to reach the desired conclusion. Again, much more is known about that object - its unitary representations have been completely classified by Kirillov in \cite{Ki}; furthermore, it is in fact extremely amenable by a result of Gromov and Milman \cite{GM} - but the present proof is, in comparison, rather simple.

\subsubsection{Ramsey-like and amalgamation properties}

The connection between Ramsey-like and amalgamation properties originates from the fundamental work of Ne\v set\v ril and R\"odl: on the one hand, any Ramsey class of finite ordered structures must have the amalgamation property \cite{Ne1}; on the other hand, the partite construction from \cite{NR2} and its descendants (arguably among the most powerful methods in structural Ramsey theory so far) are entirely based on amalgamation. Theorem \ref{thm:fpRoecke} and Theorem \ref{thm:minRoeckeDefRP} strengthen this link, by showing that amalgamation suffices to express combinatorial partition properties whose dynamical content (fixed point or trivial minimal components in the  Roelcke compactification) is actually quite close to that of the usual Ramsey property (extreme amenability, i.e. fixed point or trivial minimal components in the Samuel compactification). 

\subsubsection{Induction and the definable Ramsey property}

Unlike the usual Ramsey property, the definable Ramsey property is particularly well-adapted to a treatment by induction. This is particularly true when the underlying language is finite, as finitely many base cases suffice to show that it holds in general. More precisely: Given $\m A, \m B, \m C, \m Z$, write $\m C \rightarrow (\m B)^{\m A}_{\m Z}$ when for every joint embedding  $\langle c,z\rangle$, there is $b\in \binom{\m C}{\m B}$ so that on $\binom{b(\m B)}{\m A}$, the joint embedding pattern $[a,z]$ does not depend on $a$. Then, when the language is finite with maximum arity $k$, the definable Ramsey property holds for $\Age(\m F)$ as soon as for every $\m A, \m B, \m Z \in \Age(\m F)$ with $|\m A|+|\m Z|\leq k$ there exists $\m C$ in $\Age(\m F)$ such that $\m C \rightarrow (\m B)^{\m A}_{\m Z}$. For example, for binary structures, it suffices to consider $|\m A|=|\m Z|=1$, which is notoriously simpler than the general case where no restriction is placed on $|\m A|$.  

\subsubsection{$\omega$-categoricity versus finite language}

The definable Ramsey property is one of the first Ramsey-type phenomena where the distinction between $\omega$-categorical structures and structures in a finite language appears so explicitly. This certainly deserves to be noticed in view of the still open problem which consists in finding a well-behaved class of Fra\"iss\'e structures that admit a precompact expansion where the Ramsey property holds, see \cite{BPT}. Recall that by a result of Zucker \cite{Z2}, this problem is equivalent to that of finding a well-behaved class of non-Archimedean Polish groups whose universal minimal flow is metrizable, see also \cite{MNT} and \cite{BYMT}. I conjectured in \cite{NVT6} that Roelcke precompact groups do fall into that category. This was disproved by Evans in 2015 thanks to the use of an intricate model-theoretic construction originally due to Hrushovski, but the problem remains open for the automorphism groups coming from a Fra\"iss\'e structure in a finite language. (Evans' example is also at the center of the recent work \cite{EHN}.) With this in mind, it will be interesting to see to which extent techniques from model theory allow a better grasp on the combinatorial property exhibited in Theorem \ref{thm:fpRoecke} or on the definable Ramsey property.

\section{Equicontinuous and distal flows, definable equivalence relations and stable colorings}

\label{section:map}

In this section, we concentrate on minimal almost periodicity and on the proof of Theorem \ref{thm:MAPStableRP}. The first part, consisting of the equivalence between $i)$ and $ii)$, is carried out in Section \ref{subsection:mapdefeqrel}, where several known facts about equicontinuity and minimal periodicity are reminded. The second part is completed in Section \ref{subsection:WAP}, which deals with weakly almost periodic functions. 

\subsection{Minimal almost periodicity, almost periodic colorings and definable equivalence relations}

\label{subsection:mapdefeqrel}

Given a topological group $G$, the class of equicontinuous ambits is closed under suprema and factors \cite[Chapter IV, Section 2.27]{dV}. Since equicontinuity passes to subflows, Theorem \ref{thm:fpRP} applies to the class of equicontinuous flows. The corresponding C$^{*}$-subalgebra of $\RUCB(G)$ can be determined by using that the restriction of a $G$-flow $G\actson X$ is equicontinuous on the orbit closure $\overline{G\cdot x}$ iff $$\forall U_{\varepsilon}\in \mathcal U_{X} \ \exists U_{\eta}\in \mathcal U_{X} \ \forall x_{1},x_{2}\in G\cdot x \quad (x_{1},x_{2})\in U_{\eta} \Rightarrow \forall g\in G \ (g\cdot x_{1}, g\cdot x_{2})\in U_{\varepsilon}$$ and it is not difficult to verify that we recover the classical result according to which the corresponding C$^{*}$-subalgebra of $\RUCB(G)$ is the \emph{almost-periodic algebra} $\AP(G)$, the subalgebra of $\ROB(G)$ consisting of all those $f\in \RUCB(G)$ such that the orbit $G\bullet f$ is norm-precompact in $\RUCB(G)$ (equivalently, the orbit $G\cdot f$ is norm-precompact, see \cite[Chapter IV, Sections 5.30 and 6.15]{dV}). The corresponding compactification $G^{\AP(G)}$, usually denoted $B(G)$, is the \emph{Bohr compactification of} $G$, and is always a compact group \cite{dV}*{Appendix (D.12)}. 

In view of Theorem \ref{thm:fpRP}, we could try to provide a Ramsey-type characterization of minimal almost periodicity. However, the problem is of slightly different flavor here. Indeed, unlike what happens with many other classes of flows, having a fixed point in $G^{\AP(G)}$ simply means that $G^{\AP(G)}$ is trivial. Equivalently: every almost periodic function on $G$ is constant. Formulating Theorem \ref{thm:fpRP} would become rather awkward in that case, as it would just express that $\Aut(\m F)$ is almost periodic iff $\m F$ has the Ramsey property for some class of colorings... Which all turn out to be constant! Instead, the right approach to adopt here is to analyze which class of colorings we would be talking about. 

\begin{prop}
\label{prop:approxAP}
Let $\m F$ be a Fra\"iss\'e structure with Roelcke precompact automorphism group. Then, finite colorings are dense in $\mathrm{AP}(\Aut(\m F))$. 
\end{prop}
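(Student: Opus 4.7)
The plan is to produce, for any $f\in\AP(\Aut(\m F))$ and any $\e>0$, a finite coloring $\chi\in\AP(\Aut(\m F))$ with $\|f-\chi\|_{\infty}<\e$. My approach relies on the Peter-Weyl theorem applied to the Bohr compactification $B(G)$ of $G:=\Aut(\m F)$, combined with the observation that Roelcke-precompactness of $G$ together with the fact that $G$ is non-Archimedean forces every continuous finite-dimensional unitary representation of $G$ to have open kernel of finite index. The starting point is the classical identification $\AP(G)\cong C(B(G))$ (with $B(G)$ a compact Hausdorff group). By Peter-Weyl, $C(B(G))$ is the uniform closure of the linear span of matrix coefficients $m_{v,w}(g)=\langle\pi(g)v,w\rangle$ of continuous finite-dim unitary representations $\pi$ of $B(G)$, which by the universal property of the Bohr compactification are in bijection with continuous finite-dim unitary representations of $G$. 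Hence it suffices to verify that (a) every such $m_{v,w}$ is a finite coloring (in the sense of Section~\ref{section:colorings}) belonging to $\AP(G)$, and (b) finite linear combinations of finite colorings are finite colorings.

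Point (b) is immediate: if $\chi_{i}$ is constant on $\Stab(\m A_{i})g$-classes and finite-valued, then any linear combination is finite-valued and constant on $\Stab(\m A_{1}\cup\cdots\cup\m A_{n})g$-classes. Point (a) is the heart of the argument. I would first show that every continuous finite-dim unitary $\pi:G\to U(V)$ has open kernel of finite index in $G$. Since $U(V)$ is a compact Lie group, it has the no-small-subgroups property: there is a neighborhood $W$ of the identity $1\in U(V)$ containing no nontrivial subgroup. The preimage $\pi^{-1}(W)$ is an open neighborhood of $e$ in $G$, and because the $\Stab(\m A)$, for $\m A\in\Age(\m F)$, form a basis of open neighborhoods of $e$ (this is the non-Archimedean property), it contains some $\Stab(\m A)$. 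The subgroup $\pi(\Stab(\m A))\subset U(V)$ is then contained in $W$, hence trivial, so $\Stab(\m A)\subset\ker\pi$ and $\ker\pi$ is open. Next, with $N:=\ker\pi$ normal, Roelcke-precompactness (total boundedness of the Roelcke uniformity) applied to the open symmetric entourage $\{(g,h):h\in NgN\}$ produces a finite $F\subset G$ with $G=NFN=FN$ (the last equality by normality of $N$), giving $[G:\ker\pi]\leq|F|<\infty$. Granted this, $m_{v,w}$ factors through the finite quotient $G/\ker\pi$, so it is finite-valued, and the identity $m_{v,w}(hg)=m_{v,w}(g)$ for $h\in\Stab(\m A)\subset\ker\pi$ shows that $m_{v,w}$ is constant on each right coset $\Stab(\m A)g$. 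Thus $m_{v,w}$ is a finite coloring of $\binom{\m F}{\m A}$ belonging to $\AP(G)$.

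The main obstacle is the intermediate claim that the kernel of every continuous finite-dim unitary representation of $G$ is open and of finite index: this crucially combines the no-small-subgroups property of Lie groups (which yields openness) with the characterization of Roelcke-precompactness as total boundedness of the Roelcke uniformity together with normality of the kernel (which yields finite index). Once this is in place, Peter-Weyl turns the finite colorings into a uniformly dense subset of $\AP(G)$ essentially for free.
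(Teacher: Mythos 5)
Your proof is correct, but it takes a genuinely different route from the paper. The paper's argument is a ``soft'' averaging argument (adapted from Ben Yaacov--Tsankov): starting from $f\in\AP(\Aut(\m F))$, it applies Hahn's fixed point theorem to the affine isometric $\Stab(\m A)$-flow on $\overline{\mathrm{co}}(\Stab(\m A)\cdot f)$ to produce a left-$\Stab(\m A)$-invariant function $\chi_{1}$ with $\|\chi_{1}-f\|_{\infty}\leq\e$ (hence a coloring), and then repeats the same averaging for the right-shift action of some $\Stab(\m Z)$ to make the function constant on the double cosets $\Stab(\m A)\backslash\Aut(\m F)/\Stab(\m Z)$, of which there are finitely many by Roelcke precompactness. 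Your argument instead exploits the group structure of the Bohr compactification via Peter--Weyl, reducing everything to the structural claim that every continuous finite-dimensional unitary representation of $\Aut(\m F)$ has open kernel (no small subgroups in $U(V)$ plus the non-Archimedean basis $\Stab(\m A)$) of finite index (total boundedness of the Roelcke uniformity applied to the entourage of the open normal subgroup $N=\ker\pi$, with $NFN=FN$). Both proofs invoke Roelcke precompactness at exactly the point where finiteness of the range is extracted. What your approach buys is additional structural information: it shows that $\AP(\Aut(\m F))$ is generated by functions factoring through finite quotients $\Aut(\m F)/N$ with $N$ open normal of finite index, which connects directly to item $ii)$ of Theorem \ref{thm:MAPStableRP} and to Tsankov's classification of unitary representations cited there; the paper's averaging argument is more elementary in that it avoids representation theory and parallels the corresponding density statement for $\WAP$ (Proposition \ref{prop:ApproxWAP}). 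One cosmetic remark: you should note (as is immediate) that your approximating finite linear combinations of matrix coefficients do lie in $\AP(\Aut(\m F))$, since each matrix coefficient factors through a finite quotient and $\AP$ is a linear subspace; this is needed because the density statement is about finite colorings \emph{belonging to} $\AP(\Aut(\m F))$.
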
 

\begin{proof}
This proof is largely inspired from the proof of \cite{BYT}*{Proposition 4.7}. Let $f\in \AP(\Aut(\m F))$, $\e>0$. Since $G\cdot f$ is norm-precompact in $\RUCB(\Aut(\m F))$, we can consider the $G$-flow induced on $\overline{G\cdot f}$. By continuity of the action, find a finite substructure $\m A$ of $\m F$ such that for every $g\in \Stab(\m A)$, $\| g\cdot f - f\|_{\infty}<\e$. Consider now the induced $\Stab(\m A)$-flow on the closed convex hull $\overline{\mathrm{co}}(\Stab(\m A)\cdot f)$. This is an affine flow by isometries. By Hahn's fixed point theorem \cite{Gl}*{Chapter III, Section 5}, it admits a fixed point $\chi_{1}$. This is a coloring of $\binom{\m F}{\m A}$ by $\Stab(\m A)$-invariance, and since $\chi_{1}\in \overline{\mathrm{co}}(\Stab(\m A)\cdot f)$, we have $\| \chi_{1} - f\|_{\infty}\leq \e$. At that stage, however, $\chi_{1}$ may not be finite. This can be fixed by repeating the previous argument using the right shift action. Consider the orbit $G\bullet \chi_{1}$. As mentioned above, $G\bullet \chi_{1}$ is also norm-precompact and since $\mathrm{AP}(\Aut(\m F)) \subset \ROB(\Aut(\m F))$, this action is continuous. Hence, there exists a finite substructure $\m Z$ of $\m F$ such that for every $g\in \Stab(\m Z)$, $\| g\bullet \chi_{1} - \chi_{1}\|_{\infty}<\e$. Consider now the induced $\Stab(\m Z)$-flow on $\overline{\mathrm{co}}(\Stab(\m Z)\bullet \chi_{1})$. This is an affine flow by isometries and by Hahn's fixed point theorem, it admits a fixed point $\chi_{2}$. This is still a coloring of $\binom{\m F}{\m A}$, as every point of the orbit $G\bullet \chi_{1}$ is $\Stab(\m A)$-fixed by left shift: for $g\in G$, $h\in \Stab(\m A)$ and $k\in G$, $$ h\cdot(g\bullet \chi_{1})(k)=g\bullet \chi_{1}(h^{-1}k)=\chi_{1}(h^{-1}kg)=h\cdot\chi_{1}(kg)=\chi_{1}(kg)=g\bullet \chi_{1}(k)$$ 

By $\Stab(\m Z)$-invariance, $\chi_{2}$ is in fact constant on all the $\Stab(\m A)\backslash \Aut(\m F)/\Stab(\m Z)$-classes, but by Roelcke-precompactness of $\Aut(\m F)$, there are only finitely many such classes, so that $\chi_{2}$ is finite. Finally, since $\chi_{2}\in \overline{\mathrm{co}}(\Stab(\m B)\bullet \chi_{1})$, we have $\| \chi_{2} - \chi_{1}\|_{\infty}\leq \e$, and therefore $\| \chi_{2} - f \|_{\infty}\leq 2\e$. \end{proof} 

\begin{proof}[Proof of Theorem \ref{thm:MAPStableRP}, $i) \Leftrightarrow ii)$]
Let $\m F$ be a Fra\"iss\'e structure with Roelcke precompact automorphism group. From Proposition \ref{prop:approxAP}, $\Aut(\m F)$ is minimally almost periodic iff every finite coloring in $\mathrm{AP}(\Aut(\m F))$ is constant. Quite clearly, the orbit $\Aut(\m F)\cdot \chi$ is norm-discrete in $\RUCB(\Aut(\m F))$ whenever $\chi$ is a finite coloring. It follows that the only finite colorings in $\mathrm{AP}(\Aut(\m F))$ are those with finite orbit, and all of them are constant iff every $\Aut(\m F)$-invariant equivalence relation on $\binom{\m F}{\m A}$ with finitely many classes is trivial. 
\end{proof}

Note that the Roelcke-precompactness assumption was used to make sure that \emph{finite} colorings are dense in $\mathrm{AP}(\Aut(\m F))$. This is certainly not true in general: Consider an action of $\Z$ on the circle via an irrational rotation $n\cdot \theta = \theta + n\alpha$. This action is isometric, hence equicontinuous, so the map $n\mapsto n\theta$ is almost periodic on $\Z$. It is easy to see that this cannot be $\e$-approximated by a finite almost periodic coloring on $\Z$ for $\e$ small enough.   

\subsection{Minimal almost periodicity, weakly almost periodic colorings and the stable Ramsey property}

\label{subsection:WAP}

As already mentioned in introduction, minimal almost periodicity is equivalent to the formally stronger notion of having a fixed point in any distal flow. The corresponding class of ambits is closed under suprema and factors \cite[Chapter IV, Section 2.27]{dV}. The corresponding compactification is the so-called \emph{maximal group-like compactification of} $G$ \cite[Chapter IV, Section 6.18]{dV}, to which is attached the \emph{distal algebra}. Since this algebra contains the almost periodic one, it could have been interesting to use Theorem \ref{thm:fpRP} to derive a different combinatorial characterization of almost-periodicity than the one obtained using the algebra $\mathrm{AP}(G)$. However, we will not do that for two reasons. The first one is that the description of the distal algebra provided by Theorem \ref{thm:fpRP} does not provide any particularly illuminating way to charactize distal colorings. The second one is that an even more general result can be obtained by considering a still larger algebra of functions, namely, the \emph{weakly almost periodic algebra} $\WAP(G)$, consisting of all those $f\in \RUCB(G)$ such that the closure of $G\bullet f$ is weakly compact in the Banach space $\RUCB(G)$. Note that by the following result of Grothendieck (which we only state here for topological groups), this is equivalent to the fact that $G\cdot f$ is weakly compact: 

\begin{thm}[Grothendieck, \cite{Grothendieck}*{Proposition 7}]

\label{thm:Gro}
Let $G$ be a topological group and $f\in \RUCB(G)$. Then $f\in \WAP(G)$ iff there are no sequences $(g_{m})_{m\in \N}$, $(h_{n})_{n\in \N}$ of elements of $G$ such that $\displaystyle \lim_{m}\lim_{n}f(g_{m}h_{n})$ and $\displaystyle \lim_{n}\lim_{m}f(g_{m}h_{n})$ both exist and are distinct. 
\end{thm}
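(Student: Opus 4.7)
The result is a direct application of Grothendieck's classical double-limit criterion for weak compactness, specialized to the orbit $G \bullet f$ in $\RUCB(G)$.

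First, observe that by definition $f \in \WAP(G)$ iff $\overline{G \bullet f}$ is weakly compact in $\RUCB(G)$, and since $\RUCB(G)$ is a norm-closed subspace of $\ell^\infty(G)$, the restriction to $\RUCB(G)$ of the weak topology of $\ell^\infty(G)$ coincides with the weak topology of $\RUCB(G)$ (by Mazur together with Hahn-Banach). Hence $f \in \WAP(G)$ is equivalent to relative weak compactness of $G \bullet f$ in $\ell^\infty(G)$. Grothendieck's double-limit theorem (\cite{Grothendieck}*{Proposition 7}, in its formulation for bounded subsets of $\ell^\infty(X)$ with $X$ an arbitrary set) characterizes the latter as follows: a bounded $H \subset \ell^\infty(X)$ is relatively weakly compact iff no two sequences $(\phi_m) \subset H$ and $(x_n) \subset X$ yield distinct existing iterated limits of $\phi_m(x_n)$.

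Applied with $X = G$ and $H = G \bullet f$, writing $\phi_m = g_m \bullet f$ for some $g_m \in G$ (so that $\phi_m(h) = f(h g_m)$), the criterion becomes: no sequences $(g_m), (h_n) \subset G$ yield distinct existing iterated limits of $f(h_n g_m)$. To reach the theorem's statement on $f(g_m h_n)$, invoke a symmetric relabeling: the property ``there exist sequences producing distinct existing iterated limits of $f(a_m b_n)$'' is invariant under swapping the two sequences (exchanging $(a_m) \leftrightarrow (b_n)$ together with $m \leftrightarrow n$), and so coincides with the analogous property for $f(b_n a_m)$. The main technical point is thus the invocation of Grothendieck's theorem in its $\ell^\infty(G)$ form: although this reduces to the $C(K)$ form via $\ell^\infty(G) = C(\beta G)$ or via the Samuel compactification $G^{\RUCB(G)}$, the reduction of test sequences from the compactification back to $G$ requires a careful density argument coupled with a diagonal subsequence extraction, because a naive density approximation preserves only one of the two iterated limits at a time.
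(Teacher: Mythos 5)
The paper does not prove this statement at all: it is quoted as an external result of Grothendieck (Proposition 7 of the cited paper), so there is no internal proof to compare against. Your sketch of how the stated form follows from the classical double-limit criterion is essentially correct. The two points you single out are indeed the only ones needing care, and both are handled adequately: (1) since $g\bullet f(h)=f(hg)$, applying the criterion to the orbit $G\bullet f$ yields the condition on iterated limits of $f(h_n g_m)$, and the passage to $f(g_m h_n)$ is legitimate because the existence of a ``bad pair'' of sequences is symmetric under exchanging the roles of the two sequences (swapping inner and outer index merely swaps the two iterated limits); (2) the reduction of test points from the compactification to $G$ is exactly the dense-subset form of Grothendieck's crit\`ere de double limite, which is part of the cited Proposition 7, so you do not need to supply the diagonal extraction yourself --- you are citing a result that already contains it. A small simplification: rather than embedding $\RUCB(G)$ into $\ell^\infty(G)=C(\beta G)$ and invoking Hahn--Banach to compare weak topologies, you can use directly that $\RUCB(G)\cong C(S(G))$ with $G$ dense in the Samuel compactification $S(G)$, and apply the $C(K)$-with-dense-subset form of the criterion there; this avoids the closed-subspace step entirely. (Also, the identification of the weak topology of a closed subspace with the restricted ambient weak topology is pure Hahn--Banach; Mazur's theorem is not needed.)
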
 
 
In addition, by a result of Berglund-Junghenn-Milnes \cite{BJM}*{Chapter III, Lemma 8.8}, we have: $$\WAP(G)=\{ f_{x}:f\in \WAP(G) \wedge x\in W(G)\}$$  

It follows that all minimal subflows of $G\actson W(G)$ are trivial iff $G\actson W(G)$ has a fixed point. This last condition is, in turn, known to be equivalent to minimal almost periodicity for $G$ (for example, this is a consequence of the fact that $B(G)$ is isomorphic to the unique minimal two-sided ideal in $W(G)$ \cite[Chapter III, Section 1.9]{Ru}). 

\begin{prop}
\label{prop:ApproxWAP}
Let $\m F$ be a Fra\"iss\'e structure with Roelcke-precompact automorphism group. Then $\WAP(\Aut(\m F))$ can be approximated by finite colorings. 
\end{prop}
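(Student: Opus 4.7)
My plan is to mirror the two-step structure of the proof of Proposition \ref{prop:approxAP}, replacing Hahn's fixed point theorem by Ryll-Nardzewski's, which is the natural substitute when orbits are only weakly (rather than norm) relatively compact.

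First, given $f \in \WAP(\Aut(\m F))$ and $\e > 0$, I would use right-uniform continuity of $f$ to pick a finite substructure $\m A \subset \m F$ with $\|g\cdot f - f\|_{\infty} < \e$ for every $g \in \Stab(\m A)$. The set $K_{1} := \overline{\mathrm{co}}(\Stab(\m A)\cdot f)$ is weakly compact: indeed, $\Aut(\m F)\cdot f$ is weakly relatively compact by the very definition of $\WAP$, so by Krein's theorem its closed convex hull is weakly compact, and $K_{1}$ is a weakly closed subset of the latter. The subgroup $\Stab(\m A)$ acts on $K_{1}$ by affine linear isometries, which are automatically weak-to-weak continuous and non-contracting. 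Ryll-Nardzewski's fixed point theorem then provides a common fixed point $\chi_{1} \in K_{1}$, which as a $\Stab(\m A)$-invariant function is a coloring of $\binom{\m F}{\m A}$, lies in $\WAP(\Aut(\m F))$ (the latter being norm-closed and convex, hence weakly closed), and satisfies $\|\chi_{1} - f\|_{\infty} \leq \e$.

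Second, I would repeat the argument on the right, following the recipe of Proposition \ref{prop:approxAP}. Two ingredients are needed: $(i)$ that every WAP function on a topological group is also left-uniformly continuous, so that $g \mapsto g \bullet \chi_{1}$ is norm-continuous and a finite substructure $\m Z \subset \m F$ with $\|g \bullet \chi_{1} - \chi_{1}\|_{\infty} < \e$ for $g \in \Stab(\m Z)$ can be found; and $(ii)$ that, by the symmetry of Grothendieck's criterion (Theorem \ref{thm:Gro}), the right-shift orbit $\Aut(\m F) \bullet \chi_{1}$ is also weakly relatively compact. A second application of Ryll-Nardzewski inside $\overline{\mathrm{co}}(\Stab(\m Z) \bullet \chi_{1})$ yields $\chi_{2}$ fixed by $\Stab(\m Z)$ under the right shift, and the very same computation as in the AP case shows that every element of $\Stab(\m Z) \bullet \chi_{1}$ remains $\Stab(\m A)$-left-invariant, so the same holds for $\chi_{2}$. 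Consequently $\chi_{2}$ factors through the double coset space $\Stab(\m A) \backslash \Aut(\m F) / \Stab(\m Z)$, which is finite by Roelcke-precompactness (Proposition \ref{prop:Roelcke-precompact}); thus $\chi_{2}$ is a finite coloring in $\WAP(\Aut(\m F))$ with $\|\chi_{2} - f\|_{\infty} \leq 2\e$.

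The main obstacle is verifying the hypotheses of Ryll-Nardzewski cleanly: weak compactness of the convex hulls (via Krein's theorem), weak-to-weak continuity of the shift maps (automatic for linear isometries), and the non-contracting property (automatic for isometries on a Banach space). A subordinate but essential input is the standard fact that $\WAP \subseteq \mathrm{LUC}$, without which the second step could not produce its finite substructure $\m Z$; I would simply quote it from the Berglund-Junghenn-Milnes reference already used elsewhere. Once these inputs are accepted, the finitization via the double coset space is identical to the one in Proposition \ref{prop:approxAP}.
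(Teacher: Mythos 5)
Your proof is correct. The paper gives no argument of its own here --- it simply cites \cite{BYT}*{Proposition 4.7} --- and your two-sided Ryll--Nardzewski scheme modelled on Proposition \ref{prop:approxAP}, with Krein's theorem supplying weak compactness of the convex hulls, Grothendieck's criterion (Theorem \ref{thm:Gro}) supplying the symmetry between left and right orbits, and Roelcke-precompactness finitizing via the double coset space, is exactly the weak-topology analogue of the paper's proof of the almost periodic case and matches the argument of the cited reference. The auxiliary facts you flag (weak-to-weak continuity and the noncontracting property for affine isometries, $\WAP(G)\subseteq \mathrm{LUC}(G)$ via \cite{BJM}) are standard and correctly invoked.
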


\begin{proof}
See \cite{BYT}*{Proposition 4.7}.
\end{proof}

Applying Theorem \ref{thm:fpRPcpct}, it follows that when it is Roelcke-precompact, $\Aut(\m F)$ is minimally almost periodic iff $\m F$ has the Ramsey property for finite colorings in $\WAP(\Aut(\m F))$. We now proceed like in Section \ref{section:Roelcke} to show that this leads to the equivalence $i)\Leftrightarrow iii)$ in Theorem \ref{thm:MAPStableRP}. To do so, we follow the same scheme as for the proof of Theorem \ref{thm:minRoeckeDefRP}. The first step is to characterize weakly almost periodic colorings combinatorially. Following \cite{BYT}, this can be easily done thanks to Theorem \ref{thm:Gro}. Recall first that according to Proposition \ref{prop:Roelcke coloring}, a finite coloring $\chi$ of $\binom{\m F}{\m A}$ is in $\ROB(\Aut(\m F))$ when there is $\m Z \in \Age(\m F)$ and an embedding $z$ of $\m Z$ in $\m F$ so that $\chi(a)$ depends only on $[a,z]$. We will say then that $\chi$ is \emph{fully determined by $z$} when the converse also holds: $\chi(a)=\chi(a')$ implies $[a,z]=[a',z]$. (In other words, $\chi(a)$ is essentially $[a,z]$.) 

\begin{prop}
\label{prop:WAPStable}
Let $\m F$ be a Fra\"iss\'e structure with Roelcke-precompact automorphism group, $\m A$ a finite substructure of $\m F$ and $\chi$ be a finite coloring of $\binom{\m F}{\m A}$. Assume that $\chi\in \ROB(\Aut(\m F))$ is fully determined by $z\in \binom{\m F}{\m Z}$. Then $\chi\in \WAP(\Aut(\m F))$ iff the pair $(\m A, \m Z)$ is stable. 
\end{prop}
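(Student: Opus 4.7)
The plan is to invoke Grothendieck's criterion (Theorem \ref{thm:Gro}) after setting up a dictionary between group elements of $\Aut(\m F)$ and joint embedding patterns of $\m A$ and $\m Z$. Given $g_m, h_n\in \Aut(\m F)$, I would set $a_m:=\restrict{g_m^{-1}}{A}\in \binom{\m F}{\m A}$ and $z_n:=h_n\circ z\in \binom{\m F}{\m Z}$. Then
\[\chi(g_m h_n)=\bar\chi\bigl(\restrict{(g_m h_n)^{-1}}{A}\bigr)=\bar\chi(h_n^{-1}\circ a_m),\]
and because $\bar\chi$ is fully determined by $z$, this value depends only on and moreover determines the joint embedding pattern $[h_n^{-1}\circ a_m, z]$, which equals $[a_m, h_n\circ z]=[a_m,z_n]$ after applying the automorphism $h_n$. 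Thus there is an injective map $\tilde\chi$ from the set of realized patterns into the range of $\chi$ with $\chi(g_m h_n)=\tilde\chi([a_m,z_n])$.

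For the direction unstable $\Rightarrow$ $\chi\notin\WAP(\Aut(\m F))$, I would start from an unstable $(\m A,\m Z)$-sequence $(\langle a_m,z_n\rangle)_{m,n}$ with distinct patterns $\tau_<,\tau_>$. Since $\m F$ is Fra\"iss\'e, the $a_m$ and $z_n$ can be realized as actual embeddings into $\m F$, and by ultrahomogeneity there exist $g_m\in \Aut(\m F)$ with $\restrict{g_m^{-1}}{A}=a_m$ and $h_n\in \Aut(\m F)$ with $h_n\circ z=z_n$. The dictionary gives $\chi(g_m h_n)=\tilde\chi(\tau_<)$ for $m<n$ and $\chi(g_m h_n)=\tilde\chi(\tau_>)$ for $m>n$, and injectivity of $\tilde\chi$ makes the two constants distinct. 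Hence both iterated limits exist and differ, so Theorem \ref{thm:Gro} yields $\chi\notin\WAP(\Aut(\m F))$.

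For the converse I would argue by contrapositive: if $\chi\notin\WAP(\Aut(\m F))$, Theorem \ref{thm:Gro} provides sequences $(g_m)$ and $(h_n)$ for which $\lim_m\lim_n \chi(g_m h_n)=v_<$ and $\lim_n\lim_m \chi(g_m h_n)=v_>$ both exist and are distinct. Since $\chi$ has finite range, the existence of these iterated limits permits a standard diagonal extraction, building $m_1<m_2<\cdots$ and $n_1<n_2<\cdots$ inductively by alternately passing to the tails where each row and each column attains its asymptotic value, so that after extraction $\chi(g_{m_i} h_{n_j})=v_<$ whenever $i<j$ and $\chi(g_{m_i} h_{n_j})=v_>$ whenever $i>j$. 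Applying $\tilde\chi^{-1}$ converts this into an unstable $(\m A,\m Z)$-sequence with patterns $\tau_<\neq \tau_>$, contradicting stability. The one delicate step, and the main obstacle I expect, is precisely this diagonal extraction: one must use the finiteness of the range of $\chi$ together with the existence of the iterated limits to realize the $i<j$ versus $i>j$ dichotomy exactly rather than merely asymptotically; everything else is routine bookkeeping with the dictionary.
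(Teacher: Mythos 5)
Your proposal is correct and follows essentially the same route as the paper: both directions rest on Grothendieck's criterion (Theorem \ref{thm:Gro}) combined with the dictionary $\chi(g_{m}h_{n})=\tilde\chi([a_{m},z_{n}])$ for $a_{m}=\restrict{g_{m}^{-1}}{A}$ and $z_{n}=h_{n}\circ z$, with full determination supplying the injectivity of $\tilde\chi$. The only divergence is how the exact $m<n$ versus $m>n$ dichotomy is extracted from the Grothendieck witnesses: the paper applies Ramsey's theorem for pairs to the pattern coloring $(m,n)\mapsto[a_{m},z_{n}]$ (finite by Roelcke-precompactness), whereas you use a diagonal tail extraction exploiting the finite range of $\chi$ and the existence of the iterated limits; both are valid.
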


\begin{proof}
We prove that $\chi\notin \WAP(\Aut(\m F))$ iff the pair $(\m A, \m Z)$ is unstable. If $\chi\notin \WAP(\Aut(\m F))$, consider witness sequences $(g_{m})_{m}, (h_{n})_{n}$ provided by Theorem \ref{thm:Gro}. For $m\in \N$, define $a_{m}=\restrict{g_{m}^{-1}}{A}$ and $z_{m}=h_{m}\circ z$. Then for $m,n\in \N$, we have $\chi(g_{m}h_{n})=[a_{m},z_{n}]$. By Roelcke-precompactness of $\Aut(\m F)$, this provides a finite coloring of the pairs of naturals, so by the standard Ramsey's theorem, passing to subsequences, we may assume that there are joint embedding patterns $\tau_{<}$ and $\tau_{>}$ so that  $$\forall m,n\in \N \quad (m<n \Rightarrow [a_{m},z_{n}]=\tau_{<})\wedge (m>n \Rightarrow [a_{m},z_{n}]=\tau_{>})$$

In particular, $\displaystyle \lim_{m}\lim_{n}\chi(g_{m}h_{n})=\tau_{<}$, $\displaystyle \lim_{n}\lim_{m}\chi(g_{m}h_{n})=\tau_{>}$ and by choice of $(g_{m})_{m}$ and $(h_{n})_{n}$, $\tau_{<}$ and $\tau_{>}$ are distinct, witnessing that $(\m A, \m Z)$ is unstable. 

Conversely, assume that $(\m A, \m Z)$ is unstable, as witnessed by sequences $(a_{m})_{m}$ and $(z_{n})_{n}$ and distinct joint embedding patterns $\tau_{<}$ and $\tau_{>}$. By ultrahomogeneity of $\m F$, we can find, for every $m\in \N$, $g_{m}$ and $h_{m}$ so that $a_{m}=\restrict{g_{m}^{-1}}{A}$ and $z_{m}=h_{m}\circ z$.  Then for $m,n\in \N$, we have, as above, $\chi(g_{m}h_{n})=[a_{m},z_{n}]$, so $$\lim_{m}\lim_{n}\chi(g_{m}h_{n})=\tau_{<}\neq \tau_{>} = \lim_{n}\lim_{m}\chi(g_{m}h_{n})$$

Therefore, $\chi\notin \WAP(\Aut(\m F))$. 
\end{proof}

\begin{prop}
Let $\m F$ be a Fra\"iss\'e structure with Roelcke-precompact automorphism group. Then $\m F$ has the Ramsey property for the finite colorings in $\WAP(\Aut(\m F))$ iff for every $\m A, \m B \in \Age(\m F)$, every $\m Z^{1},...,\m Z^{k}\in \Age(\m F)$ so that every pair $(\m A, \m Z^{i})$ is stable, every joint embedding  $\langle \phi,z^{1},...,z^{k}\rangle$ of $\m F$ and $\m Z^{1},...,\m Z^{k}$, there is $b\in \binom{\phi(\m F)}{\m B}$ so that for every $i\leq k$, the coloring $a\mapsto [a,z]$ is constant on $\binom{b(\m B)}{\m A}$.

\end{prop}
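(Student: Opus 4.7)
The statement is the WAP-analogue of Proposition \ref{prop:RPRoelcke}, and the proof will follow the same template, with Proposition \ref{prop:WAPStable} providing the bridge between finite WAP colorings and stability of pairs $(\m A,\m Z^i)$.

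For the forward direction, I assume the Ramsey property for finite WAP colorings. Given stable pairs $(\m A,\m Z^i)$ for $i\leq k$ and a joint embedding $\langle\phi, z^1,\dots,z^k\rangle$, I use ultrahomogeneity of $\m F$ to reduce to the case $\phi=\mathrm{id}_{\m F}$, so that each $z^i\in\binom{\m F}{\m Z^i}$ (the witness $b$ can be transported back through $\phi$ at the end). For each $i$ I form the pattern coloring $\chi^i(a) := [a,z^i]$ on $\binom{\m F}{\m A}$, which is finite by Roelcke-precompactness (Proposition \ref{prop:Roelcke-precompact}), fully determined by $z^i$, and therefore lies in $\WAP(\Aut(\m F))$ by Proposition \ref{prop:WAPStable} combined with the stability of $(\m A,\m Z^i)$. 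Applying the WAP Ramsey hypothesis to $\{\chi^1,\dots,\chi^k\}$ then yields the required $b\in\binom{\m F}{\m B}$.

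For the backward direction, I assume the combinatorial condition. Given a finite family $\{\chi^1,\dots,\chi^k\}$ of finite colorings in $\WAP(\Aut(\m F))$, I reduce as in the proof of Proposition \ref{prop:RoelckeRP} to colorings of a common $\binom{\m F}{\m A}$. For each $i$, since $\chi^i\in\ROB$, Proposition \ref{prop:Roelcke coloring} supplies $z^i\in\binom{\m F}{\m Z^i}$ such that $\chi^i(a)$ depends only on $[a,z^i]$; I then replace $\chi^i$ by its pattern refinement $\hat\chi^i(a) := [a,z^i]$, which is lossless for our purposes, because any $b$ making $\hat\chi^i$ constant makes $\chi^i$ constant too. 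Knowing that $\hat\chi^i$ is a finite coloring in $\WAP(\Aut(\m F))$ then forces, via Proposition \ref{prop:WAPStable}, each pair $(\m A,\m Z^i)$ to be stable. The combinatorial hypothesis applied to the joint embedding $\langle\mathrm{id}_{\m F}, z^1,\dots,z^k\rangle$ now furnishes $b\in\binom{\m F}{\m B}$ such that each $a\mapsto[a,z^i]$, and hence each $\chi^i$, is constant on $\binom{b(\m B)}{\m A}$.

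The main obstacle is the promotion step in the backward direction: passing from a general finite WAP coloring $\chi^i$ to a fully determined pattern representation $\hat\chi^i$ that still lies in $\WAP$, so that Proposition \ref{prop:WAPStable} can be applied in its nontrivial direction. A priori $\hat\chi^i$ is strictly finer than $\chi^i$ and $\WAP$ need not be closed under such refinement; the argument will use that $\chi^i\in\WAP$ together with Grothendieck's double-limit criterion (Theorem \ref{thm:Gro}), Roelcke-precompactness, and the standard Ramsey theorem for pairs of naturals -- along the lines of the proof of Proposition \ref{prop:WAPStable} -- to arrange, by enlarging $\m Z^i$ within $\m F$ if necessary, a representation on a stable pair. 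Once this technical refinement is in place, the rest of the argument is a direct imitation of Proposition \ref{prop:RPRoelcke}.
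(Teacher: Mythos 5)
Your forward direction is essentially the paper's: form the pattern colorings $a\mapsto[\phi\circ a,z^{i}]$, note they are finite by Proposition \ref{prop:Roelcke-precompact} and weakly almost periodic because each $(\m A,\m Z^{i})$ is stable, and apply the Ramsey hypothesis. One caveat: the reduction to $\phi=\mathrm{id}_{\m F}$ is not available. The $z^{i}$ land in a common structure that properly contains $\phi(\m F)$, and ultrahomogeneity does not let you internalize them; the colorings really are of the ``external parameter'' form $a\mapsto[\phi\circ a,z^{i}]$ (compare Proposition \ref{prop:Roelcke ext coloring}), and their membership in $\WAP(\Aut(\m F))$ has to come from a Grothendieck-type computation as in Proposition \ref{prop:WAPStable} applied to these external patterns, not from a change of variables.

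The genuine gap is in the backward direction, at exactly the step you flag. Your plan is to pass from a finite WAP coloring $\chi^{i}$, determined by $z^{i}$, to the full pattern coloring $\hat\chi^{i}=[\cdot,z^{i}]$ and to ``arrange, by enlarging $\m Z^{i}$, a representation on a stable pair''. This cannot work. First, instability is inherited upward: if $(\m A,\m Z)$ is unstable and $\m Z$ embeds in $\m Z'$, then extending the witnessing embeddings $z_{n}$ to copies of $\m Z'$ and applying Ramsey's theorem (there are finitely many patterns by Roelcke-precompactness) yields an unstable $(\m A,\m Z')$-sequence, since the refined patterns $\tau'_{<},\tau'_{>}$ refine the distinct $\tau_{<},\tau_{>}$. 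Second, and more fundamentally, a nonconstant finite WAP coloring need not be fully determined by \emph{any} embedding over a stable pair. Take $\m F$ the random graph, $\m A$ a single vertex, $v\in F$, and $\chi(a)=1$ iff $a=v$. This $\chi$ is a finite coloring, lies in $\ROB(\Aut(\m F))$ (it is determined by $z=v$), and lies in $\WAP(\Aut(\m F))$ (equality cannot have the order property, and $\chi$ merges the two inequality patterns, which are the only ones that can witness instability); yet every pair $(\m A,\m Z)$ with $\m Z\neq\emptyset$ is unstable in the random graph, and no pattern coloring $[\cdot,z']$ has $\{v\}$ and its complement as its exact level sets. So Proposition \ref{prop:WAPStable} cannot be applied ``in its nontrivial direction'' as you propose, and the combinatorial hypothesis never speaks about such colorings directly. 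The paper's own converse is also compressed here (it simply asserts that one may restrict to fully determined colorings), but whatever reduction is intended, it is not the one you sketch: one must argue separately that the stated condition forces the Ramsey property for those residual WAP colorings whose level sets are proper unions of pattern classes over unstable pairs, and your text does not supply that argument.
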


\begin{proof}

Assume that $\m F$ has the Ramsey property for colorings in $\WAP(\Aut(\m F))$, fix $\m A, \m B, \m Z^{1},...,\m Z^{k}\in \Age(\m F)$ so that every pair $(\m A, \m Z^{i})$ is stable, and consider a joint embedding of $\m F$ and $\m Z^{1},...,\m Z^{k}$ of the form $\langle \phi,z^{1},...,z^{k}\rangle$. Each coloring defined on $\binom{\m F}{\m A}$ by $a\mapsto [\phi \circ a, z^{i}]$ is finite by Proposition \ref{prop:Roelcke-precompact}, and is in $\WAP(\Aut(\m F))$ by Proposition \ref{prop:WAPStable}. The conclusion follows. 

The converse is an immediate consequence of Proposition \ref{prop:WAPStable}, and of the fact that to check the Ramsey property for colorings in $\WAP(\Aut(\m F))$, it suffices to consider fully determined finite colorings.  
\end{proof}

\begin{prop}
\label{prop:RPWAPStableRP}
Let $\m F$ be a Fra\"iss\'e structure with Roelcke-precompact automorphism group. Then $\m F$ has the Ramsey property for the finite colorings in $\WAP(\Aut(\m F))$ iff $\Age(\m F)$ has the stable Ramsey property. 
\end{prop}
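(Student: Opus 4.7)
The plan is to pivot on the unnumbered proposition immediately above, which already identifies the Ramsey property for finite colorings in $\WAP(\Aut(\m F))$ with an infinitary statement about joint embeddings $\langle \phi,z^{1},\ldots,z^{k}\rangle$ of $\m F$ and stable-pair partners $\m Z^{i}$. It therefore suffices to prove that this infinitary statement is equivalent to the finitary stable Ramsey property for $\Age(\m F)$, which is a standard compactness-plus-lifting argument.

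For the finitary-implies-infinitary direction, given a joint embedding $\langle \phi,z^{1},\ldots,z^{k}\rangle$ of $\m F$ and $\m Z^{1},\ldots,\m Z^{k}$ with every pair $(\m A,\m Z^{i})$ stable, I would apply the stable Ramsey property to $\m A,\m B,\m Z^{1},\ldots,\m Z^{k}$ to obtain a finite $\m C\in\Age(\m F)$. Picking any embedding $c\in\binom{\phi(\m F)}{\m C}$, the tuple $\langle c,z^{1},\ldots,z^{k}\rangle$ is a joint embedding of $\m C$ and the $\m Z^{i}$'s sitting inside the common structure of the original joint embedding, and the finite stable Ramsey property then yields $b'\in\binom{\m C}{\m B}$ such that each $a\mapsto[a,z^{i}]$ is constant on $\binom{b'(\m B)}{\m A}$. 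Setting $b:=c\circ b'\in\binom{\phi(\m F)}{\m B}$ and invoking the fact that, in a relational language, the joint embedding pattern depends only on the substructure spanned by the two images, each $a\mapsto[a,z^{i}]$ is also constant on $\binom{b(\m B)}{\m A}$, as required.

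For the converse, I would transcribe the K\"onig's lemma argument from the proof of Proposition \ref{prop:DefinableRP}. Assume the stable Ramsey property fails for some $\m A,\m B,\m Z^{1},\ldots,\m Z^{k}$ with every pair $(\m A,\m Z^{i})$ stable. For each initial segment $\m F_{n}$ of $\m F$, pick a joint embedding $\langle \phi_{n},z^{1}_{n},\ldots,z^{k}_{n}\rangle$ of $\m F_{n}$ and the $\m Z^{i}$'s witnessing failure, normalized so that $\phi_{n}$ is the inclusion $\m F_{n}\hookrightarrow\m F$. Closing the resulting collection under restrictions of the first coordinate and ordering by simultaneous extension in the $\m F_{n}$-part and compatibility in the $\m Z^{i}$-parts, one obtains a tree whose levels are finite by Proposition \ref{prop:Roelcke-precompact} (applied to the disjoint union of the $\m Z^{i}$'s viewed as a single parameter), hence a finitely branching infinite tree. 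K\"onig's lemma then produces an infinite branch, corresponding to a joint embedding $\langle \phi,z^{1},\ldots,z^{k}\rangle$ of $\m F$ and the $\m Z^{i}$'s for which no $b\in\binom{\phi(\m F)}{\m B}$ witnesses the required simultaneous constancy, contradicting the infinitary statement. The main technical obstacle is setting up this tree so that its finite branching genuinely follows from Roelcke-precompactness; since Proposition \ref{prop:DefinableRP} runs through essentially the same construction for a single $\m Z$, the adaptation to finitely many $\m Z^{i}$'s is routine.
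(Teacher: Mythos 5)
Your proposal is correct and follows the same route as the paper, which itself only sketches this proof: the forward (finitary-to-infinitary) direction by composing the embedding of $\m C$ into $\phi(\m F)$ with the $b'$ supplied by the stable Ramsey property, and the reverse direction by the K\"onig's lemma compactness argument transcribed from Proposition \ref{prop:DefinableRP}. The only loose phrase is ``disjoint union of the $\m Z^{i}$'s viewed as a single parameter'' --- the finite branching really comes from iterating Proposition \ref{prop:Roelcke-precompact} to get finitely many orbits on finite products of embedding spaces, rather than from a single disjoint-union structure --- but this is the routine adaptation you acknowledge and does not affect correctness.
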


\begin{proof}
The proof is similar to the proof of Proposition \ref{prop:DefinableRP}: The converse implication holds because the stable Ramsey property is a finitization of the Ramsey property holds for colorings in $\WAP(\Aut(\m F))$, while the direct implication is obtained by a compactness argument. 
\end{proof}

\begin{proof}[Proof of Theorem \ref{thm:MAPStableRP}, $i)\Leftrightarrow iii)$]

We have seen in the introduction of the current section that $\Aut(\m F)$ is minimally almost periodic iff $\Aut(\m F)\actson W(\Aut(\m F))$ has a fixed point. By Theorem \ref{thm:fpRPcpct} and Proposition \ref{prop:ApproxWAP}, this happens exactly when $\m F$ has the Ramsey property for colorings in $\WAP(\Aut(\m F))$. Apply then Proposition \ref{prop:RPWAPStableRP}. \end{proof}

\subsection{Remarks}

One of the strengths of the original Kechris-Pestov-Todorcevic correspondence, and of Theorem \ref{thm:KPT} in particular, is its applicability: during the last ten years, it has produced numerous examples of extremely amenable groups and of concrete descriptions of universal minimal flows. It turns out that a similar strategy can be used in order to compute the Bohr compactification of the Roelcke-precompact groups of the form $\Aut(\m F)$. This is suggested by the equivalence $i)\Leftrightarrow ii)$ of Theorem \ref{thm:MAPStableRP}, but was already noticed by Ben Yaacov in \cite{BY} and by Tsankov (personal communication): first, examine whether $i)$ holds by detecting all the invariant equivalence relations with finitely many classes on the sets of the form $\binom{\m F}{\m A}$. If all of those are trivial, the group is minimally almost periodic. If not, determine the non-trivial ones (a task which may not be easy), and the closed subgroup of $\Aut(\m F)$ which fixes all the corresponding classes setwise. At the level of $\m F$, this corresponds to passing to the group $\Aut(\m F^{*})$, where $\m F^{*}$ is the expansion of $\m F$ obtained by naming those classes. This has a natural interpretation from the model-theoretic point of view: it fixes pointwise the algebraic closure of the empty set (in all finite cardinalities). This group is now minimally almost periodic. By Roelcke precompactness, $\m F^{*}$ is a precompact expansion of $\m F$, which means that the quotient $\Aut(\m F)/\Aut(\m F^{*})$ is precompact. By construction, the flow $\Aut(\m F) \actson \widehat{\Aut(\m F)/\Aut(\m F^{*})}$ is minimal and universal for all minimal equicontinuous $\Aut(\m F)$-flows. To show that it is equicontinuous, it suffices to show that $\Aut(\m F^{*})$ is normal in $\Aut(\m F)$, which is easy to check. 

For example, this method can be used to compute the Bohr compactifications for all the groups coming from Fra\"iss\'e graphs and tournaments. Note that in those cases, this may be done using a slightly different method, because the original Kechris-Pestov-Todorcevic correspondence already provides a description of the universal minimal flow as $G\actson \widehat{G/G^{*}}$, where $G^{*}$ is an extremely amenable coprecompact subgroup of $G$. It is then easy to show that the Bohr compactification of $G$ is the (compact) group $G/(G^{*})^{G}$, where $(G^{*})^{G}$ stands for the normal closure of $G^{*}$ in $G$ (for details, see \cite{NVT8}). 

Item $iii)$, on the other hand, should not be thought as a possible way to prove minimal almost periodicity, but rather as a non-trivial combinatorial consequence of it. Of course, to make use of it presupposes an ability to detect stable pairs $(\m A, \m Z)$, a task which can be attacked with model-theoretic tools. 

\section{Proximal flows and proximal colorings}

\label{section:proximal}

The purpose of this section is to concentrate on strong amenability. Ideally, the discussion would have led to analogs of Theorem \ref{thm:fpRoecke}, Theorem \ref{thm:minRoeckeDefRP} and Theorem \ref{thm:MAPStableRP} in the context of proximal flows after the following steps: 1) Description of the corresponding algebra $\A$, 2) Description of the finite colorings in $\A$, 3) Proof of the fact that finite colorings are dense in $\A$, 4) Finitization of the corresponding Ramsey-type statement. While the first two steps can be completed pretty smoothly, this is not the case for the third and fourth, which show some unexpected resistence. This explains the somewhat unsatisfactory form of Theorem \ref{thm:SAProxRP}. 

\subsection{The proximal algebra}

Given a topological group $G$, the class of proximal ambits is closed under suprema and factors \cite[Chapter IV, Section 5.30]{dV}. Since proximality passes to subflows, Theorem \ref{thm:fpRP} applies to the class of proximal flows. Quite surprisingly however, no description of the corresponding C$^{*}$-subalgebra $\mathrm{Prox}(G)$ of $\RUCB(G)$ seems to be available in the literature, so our first task here is to fill this gap thanks to the characterization provided in Theorem \ref{thm:fpRP}: $\mathrm{Prox}(G)$ consists exactly of those $f\in \RUCB(G)$ for which the $G$-flow $G\actson\overline{G\bullet f}$ is proximal (we will call those functions \emph{proximal}). To achieve this, it will be convenient to call a subset $D\subset G^{2}$ \emph{diagonally syndetic} when there is $K\subset G$ finite so that $$G^{2}=K\cdot D \ (=\bigcup_{k\in K}k\cdot D)$$ where $g\cdot(g_{1},g_{2})$ refers to the diagonal action: $g\cdot(g_{1},g_{2})=(gg_{1}, gg_{2})$. This definition is of course modeled on the standard concept of syndetic subset of $G$, where $S\subset G$ is syndetic when there is a finite $K\subset G$ so that $G=K\cdot S$.

For a $G$-flow $G\actson X$, $x_{1}, x_{2}\in X$, $U\subset X^{2}$, define the set $P(x_{1},x_{2},U)$ as: $$P(x_{1}, x_{2},U):=\{ (g_{1}, g_{2})\in G^{2}: (g_{1}\cdot x_{1}, g_{2}\cdot x_{2})\in U\}$$  

\begin{prop}
Let $G\actson X$ be a $G$-flow, $x_{1}, x_{2}\in X$. TFAE: 
\begin{enumerate}
\item[i)] For every entourage $U$, the set $P(x_{1},x_{2},U)$ is diagonally syndetic.  
\item[ii)] For every $(y_{1}, y_{2})\in \overline{G\cdot x_{1}\times G\cdot x_{2}}$, every entourage $U$, there exists $g\in G$ so that $g\cdot(y_{1}, y_{2})\in U$. 
\end{enumerate}
\end{prop}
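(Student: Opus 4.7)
The plan is to prove both implications by standard uniformity-and-compactness manipulations, exploiting that orbit closures in a $G$-flow are compact subsets of the ambient compact space $X$, and that, throughout, one may freely replace an entourage by a closed subentourage (closed entourages form a base of $\mathcal U_X$).

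For $(ii)\Rightarrow (i)$, I would fix an entourage $U$ and, without loss of generality, assume $U$ open. Set $Y:=\overline{G\cdot x_{1}}\times \overline{G\cdot x_{2}}$, which is compact. For each $(y_{1},y_{2})\in Y$, $(ii)$ provides $g_{(y_{1},y_{2})}\in G$ with $g_{(y_{1},y_{2})}\cdot(y_{1},y_{2})\in U$; by continuity of the diagonal action, there is then an open neighborhood $W_{(y_{1},y_{2})}$ of $(y_{1},y_{2})$ with $g_{(y_{1},y_{2})}\cdot W_{(y_{1},y_{2})}\subset U$. Extracting a finite subcover $W_{(y_{1}^{1},y_{2}^{1})},\ldots,W_{(y_{1}^{n},y_{2}^{n})}$ of $Y$ and letting $K:=\{g_{(y_{1}^{i},y_{2}^{i})}^{-1}:1\leq i\leq n\}$, every $(g_{1},g_{2})\in G^{2}$ satisfies $(g_{1}\cdot x_{1},g_{2}\cdot x_{2})\in W_{(y_{1}^{i},y_{2}^{i})}$ for some $i$, so with $k_{i}:=g_{(y_{1}^{i},y_{2}^{i})}$ one has $(k_{i}g_{1},k_{i}g_{2})\in P(x_{1},x_{2},U)$, whence $(g_{1},g_{2})=k_{i}^{-1}\cdot(k_{i}g_{1},k_{i}g_{2})\in K\cdot P(x_{1},x_{2},U)$.

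For $(i)\Rightarrow (ii)$, I would fix $(y_{1},y_{2})\in \overline{G\cdot x_{1}}\times \overline{G\cdot x_{2}}$ and $U\in \mathcal U_{X^{2}}$, and pick a closed entourage $V\subset U$. Choose a net $(g_{\alpha},h_{\alpha})\in G^{2}$ such that $(g_{\alpha}\cdot x_{1},h_{\alpha}\cdot x_{2})\to(y_{1},y_{2})$. By $(i)$, $P(x_{1},x_{2},V)$ is diagonally syndetic, so there is a finite $K\subset G$ with $G^{2}=K\cdot P(x_{1},x_{2},V)$; write $(g_{\alpha},h_{\alpha})=k_{\alpha}\cdot(g_{\alpha}',h_{\alpha}')$ with $k_{\alpha}\in K$ and $(g_{\alpha}'\cdot x_{1},h_{\alpha}'\cdot x_{2})\in V$. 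Since $K$ is finite, passing to a subnet I may assume $k_{\alpha}=k$ is constant, so $(k^{-1}g_{\alpha}\cdot x_{1},k^{-1}h_{\alpha}\cdot x_{2})\in V$ for all $\alpha$. By continuity of the action of $k^{-1}$ and closedness of $V$, letting $\alpha$ run gives $(k^{-1}\cdot y_{1},k^{-1}\cdot y_{2})\in V\subset U$, so $g:=k^{-1}$ is as required.

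Neither direction is difficult: both are standard uniformity/compactness moves, and the only small points requiring care are the compactness of the product of orbit closures (which is immediate since each is a closed subset of the compact space $X$) and the passage-to-subnet step in the $(i)\Rightarrow (ii)$ direction, where finiteness of $K$ is what makes the constant-$k_{\alpha}$ extraction possible. No genuine obstacle should arise.
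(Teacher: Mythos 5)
Your proof is correct and follows essentially the same route as the paper's: for $(ii)\Rightarrow(i)$ both arguments extract a finite subcover of the compact set $\overline{G\cdot x_{1}}\times\overline{G\cdot x_{2}}$ by translates of $U$ and invert the translating elements to get $K$, and for $(i)\Rightarrow(ii)$ both use syndeticity to place the orbit product inside finitely many translates of a closed (resp.\ compact) entourage and pass to the limit, your net-based phrasing being only a cosmetic variant of the paper's closure argument. No gaps.
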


\begin{proof}
$i)\Rightarrow ii)$: Fix $(y_{1}, y_{2})\in \overline{G\cdot x_{1}\times G\cdot x_{2}}$ and $U$ an entourage of the diagonal in $X$, which we may take compact. We will show that there is a finite set $K\subset G$ so that $G\cdot x_{1}\times G\cdot x_{2} \subset K\cdot U$. This is sufficient:  passing to closures $\overline{G\cdot x_{1}\times G\cdot x_{2}}\subset \overline{K\cdot U}=K\cdot U$, so $(y_{1}, y_{2})\in k\cdot U$ for some $k\in K$, so $g=k^{-1}$ satisfies $g\cdot(y_{1}, y_{2})\in U$, as required. To prove the existence of $K$: $P(x_{1},x_{2},U)$ is diagonally syndetic, so we can write $G^{2}=K\cdot P(x_{1},x_{2},U)$ for some finite $K\subset G$. Now, for $g_{1}, g_{2}\in G$, we have $(g_{1}, g_{2})=k\cdot(h_{1}, h_{2})$ for some $k\in K$ and $(h_{1}, h_{2})\in P(x_{1},x_{2},U)$, so $(g_{1}\cdot x_{1}, g_{2}\cdot x_{2})=k\cdot (h_{1}\cdot x_{1}, h_{2}\cdot x_{2})\in K\cdot U$. 

$ii) \Rightarrow i)$: Fix $U$ an open entourage of the diagonal in $X$. By assumption, $\overline{G\cdot x_{1}\times G\cdot x_{2}}\subset \bigcup_{g\in G}g\cdot U$, so by compactness, there $K\subset G$ finite such that $\overline{G\cdot x_{1}\times G\cdot x_{2}}\subset \bigcup_{g\in K}g\cdot U$. Now, $$G^{2}=P(x_{1}, x_{2}, \overline{G\cdot x_{1}\times G\cdot x_{2}})=P(x_{1}, x_{2}, \bigcup_{g\in K}g\cdot U)=\bigcup_{g\in K} g\cdot P(x_{1},x_{2},U) \qedhere$$\end{proof}

As a direct corollary: 

\begin{prop}
\label{prop:prox}
Let $G\actson X$ be a $G$-flow, $x\in X$. Then $\overline{G\cdot x}$ is proximal iff for every entourage $U$ of the diagonal in $X$, the set $P(x,x,U)$ is diagonally syndetic. 
\end{prop}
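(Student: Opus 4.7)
The plan is to derive Proposition \ref{prop:prox} as an immediate specialization of the preceding proposition to the case $x_{1}=x_{2}=x$. First I would invoke the previous equivalence with $x_{1}=x_{2}=x$: the set $P(x,x,U)$ is diagonally syndetic for every entourage $U$ of the diagonal in $X$ if and only if for every $(y_{1},y_{2})\in \overline{G\cdot x\times G\cdot x}$ and every entourage $U$ there exists $g\in G$ with $g\cdot(y_{1},y_{2})\in U$.

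The next step is to identify the set on which the condition is tested. Since $X^{2}$ carries the product topology, one has $\overline{G\cdot x\times G\cdot x}=\overline{G\cdot x}\times \overline{G\cdot x}=\overline{G\cdot x}^{2}$, so the condition above amounts to: for every $(y_{1},y_{2})\in \overline{G\cdot x}^{2}$ and every entourage $U$ of the diagonal in $X$, there is $g\in G$ with $(g\cdot y_{1},g\cdot y_{2})\in U$. Unfolding the uniform definition of proximality recalled in Section \ref{section:Intro}, this says precisely that every pair $(y_{1},y_{2})\in \overline{G\cdot x}^{2}$ is proximal, i.e.\ that the subflow $G\actson \overline{G\cdot x}$ is proximal. (Note that an entourage of the diagonal of the subspace $\overline{G\cdot x}$ is nothing but the trace on $\overline{G\cdot x}^{2}$ of an entourage of $X$, so it is harmless to quantify over entourages of $X$.) Combining the two steps yields the stated equivalence.

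There is no real obstacle here: the argument is essentially bookkeeping, and the only point that deserves a moment of attention is the identification $\overline{G\cdot x\times G\cdot x}=\overline{G\cdot x}^{2}$, which is a standard property of the product topology and holds for arbitrary subsets.
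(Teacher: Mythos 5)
Your proposal is correct and is exactly what the paper intends: Proposition \ref{prop:prox} is stated there as a ``direct corollary'' of the preceding proposition, obtained by setting $x_{1}=x_{2}=x$ and recognizing condition $ii)$ on $\overline{G\cdot x\times G\cdot x}=\overline{G\cdot x}^{2}$ as the uniform formulation of proximality of the subflow. Your added remarks on the product of closures and on restricting entourages to the compact subspace are the right bookkeeping points and match the paper's (unwritten) argument.
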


Specializing this to the $G$-flow $G\actson \overline{G\bullet f}$, 
we directly obtain: 

\begin{prop}
\label{prop:ProxAlg}
Let $f\in \RUCB(G)$. Then $f\in \mathrm{Prox}(G)$ iff for every finite $F\subset G, \e>0$, there exists a finite $K\subset G$ such that for every $(g_{1}, g_{2})\in G^{2}$, there exists $k\in K$ such that $g_{1}\bullet f$ and $g_{2}\bullet f$ are equal up to $\e$ on $Fk$. 
\end{prop}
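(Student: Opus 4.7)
The plan is to specialize Proposition \ref{prop:prox} to the specific $G$-flow $G\actson \overline{G\bullet f}$, with the distinguished point $x=f$. Recall that the right-shift action is genuinely a left action of $G$: $(g_1\bullet(g_2\bullet f))(h)=f(hg_1g_2)=((g_1g_2)\bullet f)(h)$, and the excerpt already notes that this action is continuous on $\overline{G\bullet f}$ for the pointwise topology. By the very definition of $\mathrm{Prox}(G)$ (i.e.\ the algebra producing the proximal compactification via Theorem \ref{thm:fpRP}), one has $f\in\mathrm{Prox}(G)$ iff this flow $G\actson \overline{G\bullet f}$ is proximal; equivalently, by Proposition \ref{prop:prox}, iff for every entourage $U$ of the diagonal of $\overline{G\bullet f}$, the set $P(f,f,U)=\{(g_1,g_2):(g_1\bullet f,g_2\bullet f)\in U\}$ is diagonally syndetic in $G^2$.

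Next, I would unwind the entourages. Since $\overline{G\bullet f}\subset \RUCB(G)$ is compact in the pointwise topology, its unique compatible uniformity has, as a basis, entourages of the form
\[
U_{F,\e}=\{(\phi_1,\phi_2):\forall h\in F\ |\phi_1(h)-\phi_2(h)|<\e\},
\]
where $F\subset G$ is finite and $\e>0$. For such a $U_{F,\e}$, the membership $(g_1\bullet f,g_2\bullet f)\in U_{F,\e}$ reads $|f(hg_1)-f(hg_2)|<\e$ for all $h\in F$. So proximality of $G\actson \overline{G\bullet f}$ becomes: for every finite $F\subset G$ and $\e>0$, there is a finite $K\subset G$ so that every $(g_1,g_2)\in G^2$ is of the form $(kh_1,kh_2)$ with $k\in K$ and $(h_1,h_2)\in P(f,f,U_{F,\e})$.

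Finally, I would perform the elementary bookkeeping to put this in the stated form. The displayed condition means that for each $(g_1,g_2)\in G^2$ there exists $k\in K$ such that $(k^{-1}g_1,k^{-1}g_2)\in P(f,f,U_{F,\e})$, i.e.\ $|f(hk^{-1}g_1)-f(hk^{-1}g_2)|<\e$ for every $h\in F$. Replacing $K$ by $K^{-1}$ (still finite), this reads: for every $(g_1,g_2)$ there is $k\in K$ with $|f(hkg_1)-f(hkg_2)|<\e$ for all $h\in F$, i.e.\ $|(g_1\bullet f)(hk)-(g_2\bullet f)(hk)|<\e$ for all $h\in F$; equivalently, $g_1\bullet f$ and $g_2\bullet f$ agree up to $\e$ on $Fk$, which is exactly the condition stated in the proposition.

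There is no real obstacle here: the whole proof is a direct translation of Proposition \ref{prop:prox} into combinatorial language, and the only mild subtlety is the inversion $K\leftrightarrow K^{-1}$ reflecting the mismatch between the left diagonal action on $G^2$ used in the definition of diagonally syndetic and the right-shift action $\bullet$ governing the flow $G\actson \overline{G\bullet f}$.
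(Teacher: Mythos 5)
Your proposal is correct and follows exactly the route the paper takes: the paper derives Proposition \ref{prop:ProxAlg} as a direct specialization of Proposition \ref{prop:prox} to the flow $G\actson\overline{G\bullet f}$, and your unwinding of the pointwise-topology entourages $U_{F,\e}$ and the $K\leftrightarrow K^{-1}$ bookkeeping (needed because diagonal syndeticity is phrased via left multiplication while the flow acts by right shift) is the correct way to fill in the details the paper leaves implicit.
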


\subsection{Proximal colorings, fixed points in zero-dimensional proximal flows and proximal Ramsey property}

\label{section:ProxColorings}

We now turn to a description of the colorings in $\mathrm{Prox}(\Aut(\m F))$ and to a proof of Theorem \ref{thm:SAProxRP}. Specializing Proposition \ref{prop:ProxAlg} to the case where $G=\Aut(\m F)$ with $\m F$ Fra\"iss\'e and $f$ a finite coloring, we obtain: 

\begin{prop}
Let $\m F$ be a Fra\"iss\'e structure, $\chi$ be a finite coloring of $\binom{\m F}{\m A}$. Then $\chi \in \mathrm{Prox}(\Aut(\m F))$ iff for every $\m D \in \Age(\m F)$, there are copies $\m D_{1},...,\m D_{k}$ of $\m D$ in $\m F$ such that for every $(g_{1}, g_{2})\in G^{2}$, there is $i\leq k$ such that $g_{1}\cdot \chi = g_{2}\cdot \chi$ on $\binom{\m D_{i}}{\m A}$. 
\end{prop}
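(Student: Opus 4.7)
The plan is to specialize Proposition \ref{prop:ProxAlg} to $G=\Aut(\m F)$ and $f=\chi$, and to translate its analytic condition into the desired combinatorial one using two ingredients already developed in the paper: the dictionary between finite subsets of $\Aut(\m F)$ and finite substructures of $\m F$ isolated in the proof of Proposition \ref{prop:aOSRamsey}, together with the fact, recorded in the remark following Proposition \ref{prop:KA}, that the right-shift action $\bullet$ on $\RUCB(\Aut(\m F))$ corresponds, on the space of colorings, to the left-shift action $\cdot$.

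The first step is to get rid of the parameter $\e$. Since $\chi$ takes only finitely many values, there exists $\delta>0$ such that any two distinct values of $\chi$ are more than $\delta$ apart; so, applying Proposition \ref{prop:ProxAlg} with $\e<\delta$, ``equal up to $\e$'' becomes just equal. Thus $\chi\in \mathrm{Prox}(\Aut(\m F))$ if and only if for every finite $F\subset \Aut(\m F)$ there exists a finite $K\subset \Aut(\m F)$ such that, for every $(g_1,g_2)$, some $k\in K$ makes $g_1\bullet \chi=g_2\bullet \chi$ on $Fk$. Under the coloring identification, this last equality amounts to $g_1\cdot \chi=g_2\cdot \chi$ on $\overline{Fk}$.

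For the direct implication, given $\m D\in \Age(\m F)$, I would invoke the direct half of the observation in the proof of Proposition \ref{prop:aOSRamsey} to pick $F\subset \Aut(\m F)$ finite with $\binom{\m D}{\m A}\subset \overline{F}$, which guarantees $\binom{k^{-1}(\m D)}{\m A}\subset \overline{Fk}$ for every $k\in \Aut(\m F)$. Applying the reformulated criterion produces a finite $K=\{k_1,\dots,k_n\}\subset \Aut(\m F)$, and I set $\m D_i:=k_i^{-1}(\m D)$: these are copies of $\m D$ in $\m F$, and for each $(g_1,g_2)$ the agreement of $g_1\cdot \chi$ and $g_2\cdot \chi$ on $\overline{Fk_i}\supseteq \binom{\m D_i}{\m A}$ yields the required agreement on $\binom{\m D_i}{\m A}$.

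For the converse, given a finite $F\subset \Aut(\m F)$, I would use the other half of the observation to fix $\m D\in \Age(\m F)$ with $\overline{F}\subset \binom{\m D}{\m A}$. The hypothesis supplies copies $\m D_1,\dots,\m D_n$ of $\m D$ in $\m F$; by ultrahomogeneity each one is of the form $\m D_i=k_i^{-1}(\m D)$ for some $k_i\in \Aut(\m F)$, and the inclusion $\overline{F}\subset \binom{\m D}{\m A}$ then gives $\overline{Fk_i}\subset \binom{\m D_i}{\m A}$, so agreement of $g_1\cdot \chi$ and $g_2\cdot \chi$ on $\binom{\m D_i}{\m A}$ descends to agreement on $\overline{Fk_i}$, i.e. $g_1\bullet \chi=g_2\bullet \chi$ on $Fk_i$; hence $K:=\{k_1,\dots,k_n\}$ witnesses the reformulated criterion. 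I do not anticipate a serious obstacle: the proof is essentially a routine translation, and the only point requiring care is to keep straight which inclusion between $\overline{F}$ and $\binom{\m D}{\m A}$ is needed in each direction.
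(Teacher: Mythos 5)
Your proposal is correct and follows exactly the route the paper takes: the paper states this proposition as an immediate specialization of Proposition \ref{prop:ProxAlg} to $G=\Aut(\m F)$ and $f$ a finite coloring, and your write-up simply fills in that routine translation (eliminating $\e$ via the finiteness of the range, and converting between $Fk$ and $\binom{k^{-1}(\m D)}{\m A}$ using the dictionary from the proof of Proposition \ref{prop:aOSRamsey}) with the inclusions pointing the right way in each direction.
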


Observing now that $\m D_{1},...,\m D_{k}$ are contained in some finite $\m E$, we obtain: 

\begin{prop}
\label{prop:ProxColorings}
Let $\m F$ be a Fra\"iss\'e structure and $\chi$ be a finite coloring of $\binom{\m F}{\m A}$. Then $\chi \in \mathrm{Prox}(\Aut(\m F))$ iff $\chi$ is proximal.
\end{prop}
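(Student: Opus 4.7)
The plan is to derive Proposition~\ref{prop:ProxColorings} directly from the preceding (unlabelled) proposition by translating between the two characterizations via the induced action on colorings. Recall from the remark following the proof of Proposition~\ref{prop:Gelfand} that the left-action of $\Aut(\m F)$ on colorings satisfies $(g\cdot\chi)(a)=\chi(g^{-1}\circ a)$ for $a\in\binom{\m F}{\m A}$. When $a(A)$ is contained in a finite substructure $\m E\subset\m F$, this simplifies to $(g\cdot\chi)(a)=\chi(\restrict{g^{-1}}{E}\circ a)$, which furnishes the precise bridge I will use: setting $e_j:=\restrict{g_j^{-1}}{E}$, the condition ``$g_1\cdot\chi=g_2\cdot\chi$ on $\binom{\m D_i}{\m A}$'' (for $\m D_i\subset\m E$) is equivalent to ``$\chi(e_1\circ a)=\chi(e_2\circ a)$ on $\binom{\m D_i}{\m A}$''.

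For the forward direction, I would fix $\m D\in\Age(\m F)$, apply the preceding proposition to extract copies $\m D_1,\ldots,\m D_k\subset\m F$ of $\m D$, and enclose them in a single finite $\m E\subset\m F$, identifying each $\m D_i$ with $d_i(\m D)$ for some $d_i\in\binom{\m E}{\m D}$. Given arbitrary $e_1, e_2\in\binom{\m F}{\m E}$, I would invoke ultrahomogeneity of $\m F$ to lift them to $g_1, g_2\in\Aut(\m F)$ with $\restrict{g_j^{-1}}{E}=e_j$, then use the preceding proposition to produce an index $i\leq k$ witnessing agreement on $\binom{\m D_i}{\m A}$, and translate back through the bridge identity to establish Definition~\ref{def:proxcoloring} with $d:=d_i$.

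For the converse, I would fix $\m D\in\Age(\m F)$, apply the proximality of $\chi$ to obtain a suitable $\m E$, choose an embedding of $\m E$ into $\m F$ (so we may assume $\m E\subset\m F$), and set $\m D_i:=d_i(\m D)\subset\m F$ as $d_i$ runs through the finite set $\binom{\m E}{\m D}$. For each $(g_1, g_2)\in\Aut(\m F)^2$, the restrictions $e_j:=\restrict{g_j^{-1}}{E}$ lie in $\binom{\m F}{\m E}$, so Definition~\ref{def:proxcoloring} furnishes some $d=d_i$ along which $a\mapsto\chi(e_1\circ a)$ and $a\mapsto\chi(e_2\circ a)$ agree on $\binom{d_i(\m D)}{\m A}$; reading the bridge identity in reverse yields $g_1\cdot\chi=g_2\cdot\chi$ on $\binom{\m D_i}{\m A}$, which is exactly the hypothesis of the preceding proposition.

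The only point requiring care will be the inverse in the induced-action formula $(g\cdot\chi)(a)=\chi(g^{-1}\circ a)$, which forces one to lift each embedding $e:\m E\to\m F$ to an automorphism $g$ satisfying $\restrict{g^{-1}}{E}=e$ rather than $\restrict{g}{E}=e$. Ultrahomogeneity of $\m F$ supplies such lifts immediately, after which the rest of the argument is routine bookkeeping with the substructures $\m D_i\subset\m E$ and the embeddings in $\binom{\m E}{\m D}$.
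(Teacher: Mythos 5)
Your proposal is correct and follows essentially the same route as the paper, which deduces Proposition~\ref{prop:ProxColorings} from the preceding proposition with the single remark that the copies $\m D_{1},\ldots,\m D_{k}$ lie in a common finite $\m E$; your write-up simply makes explicit the translation $(g\cdot\chi)(a)=\chi(g^{-1}\circ a)$ and the ultrahomogeneity lifts that this remark leaves implicit. The care you take with $\restrict{g^{-1}}{E}=e$ versus $\restrict{g}{E}=e$ is exactly the right bookkeeping, and nothing further is needed.
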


\begin{prop}
Let $\m F$ be a Fra\"iss\'e structure. Then $\m F$ has the Ramsey property for colorings in $\mathrm{Prox}(\Aut(\m F))$ iff $\m F$ has the proximal Ramsey property. 
\end{prop}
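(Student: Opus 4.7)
The plan is to use Proposition \ref{prop:ProxColorings}, which identifies the finite colorings belonging to $\mathrm{Prox}(\Aut(\m F))$ with the finite proximal colorings in the combinatorial sense of Definition \ref{def:proxcoloring}. One direction is then immediate: if $\m F$ has the Ramsey property for colorings in $\mathrm{Prox}(\Aut(\m F))$, specializing to a singleton family $\mathcal C = \{\chi\}$ with $\chi$ a finite proximal coloring yields exactly the proximal Ramsey property.

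For the converse, the task is to reduce the Ramsey property for a finite family of proximal colorings to the single-coloring version appearing in the proximal Ramsey property. Given $\mathcal C = \{\chi_{1},\dots,\chi_{n}\}$ of finite proximal colorings of $\binom{\m F}{\m A}$, the natural move is to consider the product coloring $\chi:=(\chi_{1},\dots,\chi_{n})$ taking values in $\prod_{i} \mathrm{range}(\chi_{i})$, which is again finite. It then suffices to prove that the product of two proximal colorings is proximal, and conclude by induction on $n$; applying the proximal Ramsey property to $\chi$ and $\m A, \m B$ then provides $b\in\binom{\m F}{\m B}$ on which every coordinate, hence every $\chi_{i}$, is constant on $\binom{b(\m B)}{\m A}$.

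The crux is therefore showing that if $\chi_{1}, \chi_{2}$ are finite proximal colorings of $\binom{\m F}{\m A}$, so is $(\chi_{1},\chi_{2})$. Given $\m D\in \Age(\m F)$, I would first apply Definition \ref{def:proxcoloring} to $\chi_{1}$ and $\m D$ to obtain some $\m E_{1}\in \Age(\m F)$, and then apply it to $\chi_{2}$ with input $\m E_{1}$ to obtain some $\m E\in \Age(\m F)$. For arbitrary $e_{1}, e_{2}\in \binom{\m F}{\m E}$, the proximal property of $\chi_{2}$ yields $d'\in \binom{\m E}{\m E_{1}}$ so that $\chi_{2}\circ e_{1}$ and $\chi_{2}\circ e_{2}$ agree on $\binom{d'(\m E_{1})}{\m A}$. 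Applying next the proximal property of $\chi_{1}$ to the two embeddings $e_{1}\circ d', e_{2}\circ d'\in \binom{\m F}{\m E_{1}}$, there exists $d\in \binom{\m E_{1}}{\m D}$ so that $\chi_{1}\circ e_{1}\circ d'$ and $\chi_{1}\circ e_{2}\circ d'$ agree on $\binom{d(\m D)}{\m A}$. Setting $d'':=d'\circ d\in \binom{\m E}{\m D}$, the agreement for $\chi_{1}$ is immediate, while the agreement for $\chi_{2}$ follows because $d\circ a\in \binom{d'(\m E_{1})}{\m A}$ for every $a\in \binom{d(\m D)}{\m A}$, transferring the equality already obtained at the $\m E_{1}$-level.

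I do not expect any serious obstacle in this argument: the only slightly delicate point is keeping track of the nested embeddings when stacking the two applications of Definition \ref{def:proxcoloring}, and making sure that the witness produced for $\chi_{2}$ at scale $\m E_{1}$ is compatible with the witness produced for $\chi_{1}$ at scale $\m D$. Once that composition is set up correctly, the induction on $n$ and the appeal to Proposition \ref{prop:ProxColorings} are routine.
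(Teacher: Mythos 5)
Your proposal is correct, but the way you establish the key closure fact differs from the paper. Both proofs reduce the finite-family version to the single-coloring version via the product coloring $a\mapsto(\chi_{1}(a),\dots,\chi_{n}(a))$, and both get the forward direction by specializing to singletons. The difference is in how one sees that the product of finitely many proximal colorings is again proximal: the paper argues dynamically, observing that the supremum $\bigvee_{i}(\overline{\Aut(\m F)\cdot\chi_{i}},\chi_{i})$ of proximal ambits is proximal (closure of proximal ambits under suprema and factors) and then translating back through Proposition \ref{prop:ProxColorings}, whereas you prove it purely combinatorially by stacking two applications of Definition \ref{def:proxcoloring}: take $\m E_{1}$ for $(\chi_{1},\m D)$, then $\m E$ for $(\chi_{2},\m E_{1})$, and compose the witnesses $d''=d'\circ d$. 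Your composition is set up correctly — agreement for $\chi_{1}$ comes from applying its proximality to the pair $e_{1}\circ d', e_{2}\circ d'$, and agreement for $\chi_{2}$ from restricting its agreement on $\binom{d'(\m E_{1})}{\m A}$ to the subset $\binom{d'(d(\m D))}{\m A}$ — though note a small notational slip: the element you need to place in $\binom{d'(\m E_{1})}{\m A}$ is $d'\circ d\circ a$ (equivalently, $d\circ a\in\binom{\m E_{1}}{\m A}$ pushed forward by $d'$), not $d\circ a$ itself. The paper's route is shorter given that the closure properties of proximal ambits and Proposition \ref{prop:ProxColorings} are already in hand; yours has the merit of staying entirely on the combinatorial side and showing directly that the class of proximal colorings in the sense of Definition \ref{def:proxcoloring} is closed under finite products, without appealing to the dynamical characterization.
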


\begin{proof}
The Ramsey property for colorings in $\mathrm{Prox}(\Aut(\m F))$ refers to finite collections of finite proximal colorings, while the proximal Ramsey property only refers to one such coloring. The direct implication is therefore obvious. For the converse, notice that given a finite set $\chi_{1},...,\chi_{l}$ of finite proximal colorings, the $\Aut(\m F)$-ambit $\bigvee_{i=1}^{l} (\overline{\Aut(\m F)\cdot\chi_{i}},\chi_{i})$ is proximal. It follows that the coloring $a\mapsto (\chi_{i}(a))_{1\leq i\leq l}$ is also proximal, so by the proximal Ramsey property, it is constant on $\binom{b(\m B)}{\m A}$ for some $b$. Clearly, each $\chi_{i}$ is then constant on $\binom{b(\m B)}{\m A}$, witnessing that $\m F$ has the Ramsey property for colorings in $\mathrm{Prox}(\Aut(\m F))$. 
\end{proof}

\begin{proof}[Proof of Theorem \ref{thm:SAProxRP}]
According to Theorem \ref{thm:fpRP}, every zero-dimensional proximal $\Aut(\m F)$-flow has a fixed point iff $\m F$ has the Ramsey property for colorings in $\mathrm{Prox}(\Aut(\m F))$. By the previous proposition, this is equivalent to the proximal Ramsey property. 
\end{proof}

\subsection{Remarks}

\label{subsection:RksProx}

The difficulty to prove that finite colorings are dense in the proximal algebra is the main obstacle to a more satisfactory form of Theorem \ref{thm:SAProxRP}, and it is reasonable to wonder where it is coming from. Can this be solved by adding an extra natural topological hypothesis on $\Aut(\m F)$, which would play the r\^{o}le that Roelcke precompactness played for distal flows? Note that even if this were possible, the relevance of the present approach as an effective method to prove strong amenability by combinatorial means looks rather questionable, as the proximality condition on colorings does not seem to make it particularly easy to deal with in practice. Note also that, in the same vein, it would be interesting to find a topological property that ensures that the proximal universal minimal flow of $\Aut(\m F)$ is metrizable. (This should probably be equivalent to the fact that $\Aut(\m F)$ contains a co-precompact strongly amenable closed subgroup.) 

In a slightly different spirit: Assume that a Polish group $G$ is minimally almost periodic and strongly amenable. Is $G$ necessarily extremely amenable? The answer is positive when the universal minimal flow of $G$ is metrizable (see \cite{NVT8}) but the general case remains open. In fact, even when $G$ is assumed to be monothetic (= contains a dense cyclic subgroup), this is the content of a famous open problem of Glasner (see \cite{Gl1}, as well as Pestov's contribution in \cite{OT} for a detailed account about it).   

\section{Strongly proximal flows and amenability}

\label{section:stronglyproximal}

Following Furstenberg, recall that a flow is \emph{strongly proximal} when the affine flow it induces on the space of Borel probability measures is proximal. These flows are well-behaved in the sense that they satisfy the hypotheses of Theorem \ref{thm:fpRP}. In addition, the fixed point property on this class is equivalent to being amenable, which, in principle, makes amenability approachable by the general method of the present paper. However, in practice, the obstructions that appeared with proximal flows in the previous section also appear when dealing with strongly proximal flows. In addition, because of a lack of a characterization of strong amenability in terms of syndetic sets in the spirit of Proposition \ref{prop:prox}, no characterization of the strongly proximal algebra parallel to Proposition \ref{prop:ProxAlg} is available at the moment. For those reasons, the specialization of Theorem \ref{thm:fpRP} to amenability and strongly proximal flows will not be detailed further here. 

Nevertheless, there does exist a Ramsey-theoretic characterization of amenability, provided by Theorem \ref{thm:AConvexRP}. This result is originally due to Moore \cite{M} and to Tsankov \cite{Ts1}. Both proofs are rather similar, and pretty close to the following one, which is in the spirit of the rest of the paper. 

\begin{proof}[Proof of Theorem \ref{thm:AConvexRP}]

The starting point is the following characterization of amenability: A topological group $G$ is amenable iff every $G$-flow admits an invariant (Borel probability) measure. Because the Samuel compactification $S(G)$ maps onto any minimal $G$-flow, this is equivalent to the existence of a fixed point in $\mathrm{Prob}(S(G))$, the set of all Borel probability measures on $S(G)$. This set is compact and convex, and it admits a fixed point iff the following statement $(*)$ holds: for every finite family $\mathcal F$ of continuous affine maps on $\mathrm{Prob}(S(G))$, every $\varepsilon >0$, every finite $H\subset G$, there exists $\mu \in \mathrm{Prob}(S(G))$ which is fixed up to $(\mathcal F, \e, H)$, i.e. every $f\in \mathcal F$ is constant on $H\cdot\mu$ up to $\varepsilon$. 

Now, since $G$ is dense in $S(G)$ and the finitely supported measures on $S(G)$ are dense in $\mathrm{Prob}(S(G))$, the above $\mu$ can be replaced by a finite convex linear combination $\sum_{i=1}^{n}\lambda_{i}\delta_{g_{i}}$. Next, because $S(G)$ is the set of extreme points in $ \mathrm{Prob}(S(G))$, every element of $\mathcal F$ is nothing else than the natural affine extension of its restriction to $S(G)$. This, in turn, is just an element of $C(S(G))=\RUCB(G)$. In other words, $(*)$ is equivalent to: for every finite $\mathcal F \subset \RUCB(G)$, every $\varepsilon >0$, every finite $H\subset G$, there exists a convex linear combination $\lambda_{1},..., \lambda_{n}$ and $g_{1},...,g_{n}\in G$ such that for every $f\in \mathcal F$, the map $$h\mapsto f(h\cdot \sum_{i=1}^{n}\lambda_{i}\delta_{g_{i}})=\sum_{i=1}^{n}\lambda_{i}f(hg_{i})$$ is constant on $H$ up to $\e$. Note that without loss of generality, we may assume that $\mathcal F$ consists of one single $f\in \RUCB(G)$. 

When $G$ is of the form $\Aut(\m F)$ for some Fra\"iss\'e structure $\m F$, this discretizes (in the spirit of Section \ref{section:colorings}) as: for every $\m A, \m B \in \Age(\m F)$, every $\e>0$, and every finite coloring $\chi$ of $\binom{\m F}{\m A}$, there is a finite convex linear combination $\lambda_{1},...,\lambda_{n}$, and $b_{1},...,b_{n}\in \binom{\m F}{\m B}$ such that the coloring $\displaystyle a\mapsto \sum_{i=1}^{n}\lambda_{i}\chi(b_{i}\circ a)$ is constant on $\binom{\m B}{\m A}$. 

This, in turn, is equivalent to the convex Ramsey property via a standard compactness argument. 
\end{proof}

As indicated in the introduction, the practical use of Theorem \ref{thm:AConvexRP} is so far limited. There are promising exceptions, as the papers \cite{GKP} by Gadhernezhad, Khalilian and Pourmahdian, and \cite{EG} by Etesami and Gadhernezhad, do make use of it to prove that certain automorphism groups of the form $\Aut(\m F)$, where $\m F$ is a so-called \emph{Hrushovski structure}, are not amenable. Nevertheless, there is presently no significant instance where Theorem \ref{thm:AConvexRP} can be used to prove that some group is amenable. There are substantial results regarding amenability of groups of the form $\Aut(\m F)$ (see for example \cites{AKL, PS}), but all of them rest on an explicit description of the universal minimal flow, as well as on an analysis of the invariant measures on this flow. This method, in turn, imposes severe restrictions on the groups under consideration. 

Quite interestingly though, the use of the convex Ramsey property to characterize amenability naturally leads to the following question: Is there a characterization of strong amenability in similar terms? Once again, the answer is positive when the universal flow is metrizable (see \cite{MNT}), but the general answer remains unknown, due to the lack of a general characterization of strong amenability in terms of existence of invariant measures.

\subsection*{Acknowledgements}

This paper has benefited from numerous discussions with several people. I would like to particularly thank Ita\"i Ben Yaacov regarding the Roelcke and the Bohr compactifications, as well as the notion of stability; Eli Glasner regarding the r\^{o}le of ambits, as opposed to flows; Michael Megrelishvili and Vladimir Pestov regarding the notion of point-universality; Julien Melleray for his sharpness to detect sloppy arguments; Todor Tsankov for the helpful references concerning unitary representations; Benjy Weiss regarding proximal flows; Sylvie Benzoni-Gavage, Isabelle Chalendar and Xavier Roblot, for hosting at the Institut Camille Jordan; and finally the anonymous referee for her/his very careful revision of the paper. Her/His suggestions and comments led to the correction of several mistakes, and substantially improved the quality of the paper.

\bibliographystyle{amsalpha}
\bibliography{Bib17Jan}
\end{document}